\renewcommand*\env@matrix[1][*\c@MaxMatrixCols c]{%
  \hskip -\arraycolsep
  \let\@ifnextchar\new@ifnextchar
  \array{#1}}
\def\dispace{\setlength{\itemsep}{2pt}}
\newcommand{\joverline}[2]{%
  \mathord{
    \vbox{\offinterlineskip
      \halign{##\cr
        $\scriptscriptstyle#1$\hrulefill\cr
        \noalign{\kern.4ex}
        $\; #2 \,$\cr
      }%
    }%
  }%
}
\newtheorem{theorem}{Theorem}[section]
\newtheorem{df}[theorem]{Definition}
\newtheorem{conjecture}[theorem]{Conjecture}
\newtheorem{example}[theorem]{Example}
\newtheorem{remark}[theorem]{Remark}
\def\al{\alpha}
\def\bt{\beta}
\newcommand\INV{{\operatorname{INV}}}
\def\snor{strictly normal}
\def\qI{\mathcal{I}}
\def\qIr{\mathcal{I}^r}
\def\qIl{\mathcal{I}^\ell}
\newcommand{\ds}[1]{\, {#1} \, }
\newcommand{\dss}[1]{\quad {#1} \quad }
\newcommand{\mcong}[2]{\,^{#2}\hskip -.25ex{#1}}
\def\Mat{\operatorname{Mat}}
\def\MatnF{\Mat_n(F)}
\def\INV{\Mat_n(F)^{\times}}
\def\QI{\operatorname{QI}}
\def\GLn{\operatorname{GL}_n}
\def\SL{\operatorname{SLS}}
\def\SLn{\operatorname{SL}_n}
\def\SLnI{\operatorname{SLS}_{n}^{\one}}
\def\QSL{\SLS}
\def\BQSLn{\overline{\SL}_n}
\def\BQSLtwo{\overline{\SL}_2}
\def\BQSL{\overline{\SL}}
\def\SLmonr{\operatorname{\BQSL}^{\; r}_{\qI;n}}
\def\SLmonl{\operatorname{\BQSL}^{\; \ell}_{\qI;n}}
\def\SLmonlA{\operatorname{\BQSL}^{\; \ell}_{A;n}}
\def\SLmonrA{\operatorname{\BQSL}^{\; r}_{A;n}}
\def\SLmonA{\operatorname{\BQSL}_{A;n}}
\def\QSLn{{\QSL}_n}
\def\tJn{\operatorname{SN}_{n}}
\newcommand\GLnA[1]{{\operatorname{GL}}{[1]}}
\newcommand\SLA[1]{{\operatorname{SL}}{[1]}}
\def\semirings0{semirings$^\dagger$}
\def\domain0{domain$^\dagger$}
\def\domains0{domains$^\dagger$}
\def\field0{semifield$^\dagger$}
\def\fields0{semifields$^\dagger$}
\def\mfS{\operatorname{S}}
\def\pSkip{\vskip 1.5mm \noindent}
\def\sSkip{\vskip 3mm \noindent}
\def\pipeGS{{\underset{\operatorname{\, gs }}{\mid}}}
\def\lmod{\mathrel   \pipeGS \joinrel\joinrel \joinrel =}
\def\lmodg{\lmod}
\def\sgn{  \operatorname{sgn}}
\def\nucong{\cong_\nu}
\def\nuge{\ge_\nu}
\def\nule{\le_\nu}
\def\nul{<_\nu}
\def\nug{>_\nu}
\newcommand{\etype}[1]{\renewcommand{\labelenumi}{(#1{enumi})}}
\def\eroman{\etype{\roman} \dispace}
\def\tGz{\mathcal G_\zero}
\def\({\left(}
\def\){\right)}
\def\htR{\widehat {R}}
\def\htRR{\widehat {\R}}
\def\brA{\overline{A}}
\def\brB{\overline{B}}
\def\trn{{\operatorname{t}}}
\def\tGz{\mathcal G_\zero}
\newcommand{\trop}[1]{\mathcal{#1}}
\newcommand{\tG}{\trop{G}}
\newcommand{\tS}{\trop{S}}
\newcommand{\tT}{\trop{T}}
\def\per{\operatorname{det}}
\def\bid{\operatorname{bidet}}
\def\a{\alpha}
\def\sig{\sigma}
\def\SLS{\operatorname{SLS}}
\def\one{\mathbb{1}}
\def\zero{\mathbb {0}}
\def\um{I}
\def\nb{\nabla}
\def\nbnb{{\nabla\nabla}}
\def\id{\operatorname{id}}
\newcommand{\adj}[1]{\operatorname{adj}(#1)}
\def\Q{\mathbb Q}
\def\R{\mathbb R}
\newtheorem{thm}[theorem]{Theorem}
\newtheorem*{thm*}{Theorem}
\newtheorem*{dig*}{Digression}
\newtheorem{cor}[theorem]{Corollary}
\newtheorem{lem}[theorem]{Lemma}
\newtheorem{rem}[theorem]{Remark}
\newtheorem{prop*}{Proposition}
\newtheorem{prop}[theorem]{Proposition}
\newtheorem{defn}[theorem]{Definition}
\newtheorem*{examp*}{Example}
\newtheorem*{examples*}{Examples}
\newtheorem*{remark*}{Remark}
\newtheorem*{defn*}{Definition}
\newtheorem*{note*}{Note}
\DeclareMathAlphabet{\mathbbold}{U}{bbold}{m}{n}
\newcommand{\TR}{\mathbbold{R}}
\begin{document}
\title[Supertropical $\SLn$]{Supertropical  $\SLn$}

\author[Z.~Izhakian]{Zur Izhakian}
\address{Institute  of Mathematics,
 University of Aberdeen, AB24 3UE,
Aberdeen,  UK. }
    \email{zzur@abdn.ac.uk}

 \author[ A.~Niv]{Adi Niv}
\address{Mathematics Unit, SCE College of Engineering, 56 Bialik St., Beer-Sheva 84100 Israel}
\email{adini1@sce.ac.il}

\author[L.~Rowen]{Louis Rowen}
\address{Department of Mathematics, Bar-Ilan University, Ramat-Gan 52900,
Israel.} \email{rowen@math.biu.ac.il}

\thanks{The   authors thank Oliver Lorscheid for raising the question concerning the tropical SL monoid.  (Yale, May  2014).}
\thanks{The   authors also thank the reviewer for constructive suggestions on the original submission.}

\thanks{The research of the first author  has been sported by the Research Councils UK (EPSRC), grant no EP/N02995X/1.}

\thanks{The second author  has been sported by the French Chateaubriand grant and INRIA postdoctoral fellowship.}

\thanks{The third author would like to thank the University of
Virginia for its support during the preparation of this work}


\subjclass[2010]{Primary: 15A03, 15A09,  15A15, 65F15; Secondary:
16Y60, 14T05. }

\date{\today}

\keywords{Supertropical matrix algebra, special linear monoid,
matrix monoid, tropical adjoint matrix, quasi-identity, elementary
matrix.}



\begin{abstract} Extending earlier work on supertropical adjoints and applying symmetrization,
we provide a symmetric supertropical version $\QSLn$ of the special
linear group $\SLn$, which we partially decompose into submonoids,
based on ``quasi-identity'' matrices, and we display maximal
sub-semigroups of $\QSLn$. We also study the monoid generated by
$\QSLn$ and its natural submonoids. Several illustrative examples
are given of unexpected behavior. We describe the action of
elementary matrices
 on $\QSLn$, which enables one to connect different matrices in $\QSLn$,
 but in a weaker sense than the classical situation. 
\end{abstract}

\maketitle


\numberwithin{equation}{section}

\section*{Introduction} This paper rounds out \cite{IzhakianRowen2008Matrices,IzhakianRowen2009Equations}, its main objective being to lay out the
foundations of the theory of~$\SLn$ in tropical linear algebra.
Given any semiring~$R$, one can define the
 matrix semiring, comprised of matrices~$A = (a_{i,j}) $  with entries in~$R,$ where the addition and
multiplication  of matrices are induced from~$R$ as in the familiar
ring-theoretic matrix construction.

The classical definition of~$\GLn$ is the set of invertible
matrices, which coincides  with the set of nonsingular matrices. Then,
the set~$\SLn\subseteq \GLn$ is the set of matrices with
determinant~$1$, in which case~$A^{-1} = \adj A$. In particular,
this is the group generated by the elementary matrices~$E_{i,j}$,
which differ from the identity matrix by one nondiagonal nonzero
entry in the~$(i,j)$ position. These elementary matrices play a
fundamental role in linear algebra and K-theory. Our basic goal is
to find the tropical analog, containing the elementary matrices and
preferably all matrices of determinant~$1$, which raises various
difficulties. Tropical algebra is based on the max-plus algebra, for
which negation does not exist and its underlying semiring structure
is idempotent. For purposes of motivation, we consider matrices over
an ordered semifield~$F$ (i.e.,~$F \setminus\{\zero\}$ is a
multiplicative group), such as the max-plus algebra~$\Q_{\max}$
or~$\R_{\max}$ (noting that in this case the multiplicative
identity~$\one$ is~$0$, and the additive identity~$\zero$
is~$-\infty$). Later on we switch to the supertropical language,
which is more convenient.

Invertibility of matrices (in its classical sense) is quite
restricted in the (super)tropical setting. In view of Remark~2.1
below, the matrices~$E_{i,j}$ are not invertible, and therefore do
not generate any permutation matrices. Nevertheless, the
matrices~$E_{i,j}$ are tropically nonsingular (to be defined
presently) of determinant~$\one$. Applying a permutation matrix to a
set of vectors in~$F^{(n)}$ merely rearranges the coordinates,
whereas applying a diagonal matrix rotates the rays, or thought
another way, rescales the coordinates. From this point of
view,~$E_{i,j}$ has considerable geometric significance, and should
be in any serious tropical version of~$\SL_n $.

The classical determinant of~$A=(a_{i,j})$ is no longer
available in the tropical setting, due to its lack of negations. One of the challenges of tropical
matrix theory has been to introduce a viable analog of the determinant, given these limitations.
In~\cite{Pl}, the determinant was defined as usual, using tropical
operations and permutation signs. In~\cite{IzhakianRowen2007SuperTropical}, the permanent (called tropical determinant) was used as a
substitute,
  given as
 \begin{equation}\label{detdf}\per(A) = \sum_{\pi \in \mfS_n}    \prod_{i=1}^n a_{i,\pi(i)},\end{equation} and
  formulated in
\cite{B} as the optimal assignment problem. This
  approach has roots  going back to~\cite{RS,St}, and~\cite{Pl} also
  studied the optimal assignment problem by means of the permanent.

Using the  permanent leads to a corresponding definition of the
adjoint matrix and the matrix (cf.~\cite{zur05TropicalAlgebra}) $$A^\nb : = {\per(A)}^{-1} \adj{A},$$
 and was used in~\cite{IzhakianRowen2008Matrices,IzhakianRowen2009Equations} to build
a theory parallel to the classical theory. In particular, a
matrix~$A$ is \textbf{nonsingular} if $ \per(A)$ is ``tangible,''
and these matrices are exactly those of full row rank, by
\cite[Corollary~6.6]{IzhakianRowen2008Matrices}. So one is led to
define~$\SL_n$ to be the set of nonsingular matrices with
determinant~$\one$, in which case~$A^\nabla = \adj A$.

Although the supertropical language is not strictly needed for our definition of~$\SL_n$, it makes the
statements easier, and ``supertropical matrix theory" has led to results in linear algebra unavailable in
other tropical versions, such as equality of matrix ranks, a natural analog of the Cayley-Hamilton theorem,
solutions of eigenvalues, etc., as indicated in~\cite{IzhakianRowen2009TropicalRank, IzhakianRowen2008Matrices, IzhakianRowen2009Equations}.

\begin{df}\label{norm} A matrix~$A=(a_{i,j})$ is \textbf{definite} if the identity permutation is the unique dominant permutation in~\eqref{detdf},
with~$a_{i,i} = \one$ for all $i$. If~$A$ is definite and~$a_{i,j}\leq \one$ for all $i\ne j$, then~$A$ is \textbf{normal}.~$A$ is \textbf{strictly normal} when all these inequalities are strict.\end{df}

All matrices in~$E_{i,j}$ are definite for all~$i,j$, and every
definite matrix is in~$\SL_n$. The set~$\SL_n$ also contains all
permutation matrices and all diagonal matrices of
determinant~$\one$. It has long been known (see~\cite{St} for
instance), that when~$A_1A_2$ is nonsingular, then it has a unique
dominating permutation and~$\det(A_1A_2) = \det(A_1) \det(A_2)$.
Thus, a nonsingular  product of two matrices in~$\SL_n$   is
in~$\SL_n$, and a nonsingular  product of  two definite matrices is
definite.

Unfortunately,~$\SL_n$  is no longer a group (or even a monoid),
since it need not be closed under tropical matrix multiplication;
for example,  for non-definite matrices,

\begin{equation}\label{badprod} \left(\begin{array}{cc}\one&a\\\zero&\one\end{array}\right)
\left(\begin{array}{cc}\one&\zero\\b&\one\end{array}\right)=\left(\begin{array}{cc}\one+ab&a\\b&\one\end{array}\right).\end{equation} Nevertheless, tropical matrix multiplication
is closed if the multiplicands are strictly normal.

Thus, one of our main objectives is to study $\SL_n$ via related
monoids. We need   a suitable monoid to work with,
cf.~Definition~\ref{symmon}, in order to have a proper
 algebraic structure to progress with K-theory.

One can expand $\SL_n$ a bit by means of an approach of Akian,
 Gaubert, and  Guterman \cite{AGG}, and the Max-plus group \cite{Pl}. They had
 already refined the determinant
  by
 distinguishing between the even and odd permutations in defining the
 \textbf{bideterminant}; also see~\cite{BCOQ}.    A
related approach is given in~\cite{AGG1}. In this context, a matrix
is \textbf{(symmetrically) singular} if
$$\sum_{\text{odd }\pi\in S_n}  \prod_{i=1}^n a_{i,\pi(i) }=
 \sum_{\text{even }\pi\in S_n}  \prod_{i=1}^n  a_{i,\pi(i) }.$$ This yields
  a symmetrized version of~$\SL_n$ in Definition~\ref{genmon}, permitting
symmetrically nonsingular matrices.

This also leads to a subtle distinction, since a singular
 matrix in the supertropical sense (which is a tropicalization of
 a singular matrix over a Puiseux series) could be nonsingular in
 the symmetrized sense.

Consider for example the singular Puisseux matrix $ A=
\(\begin{smallmatrix}
                 t &  t & 0\\
                0 & t& t\\
(2-i)t &  0 &  (i-2)t
               \end{smallmatrix}\).$ Although its tropicalization
$ \(\begin{smallmatrix}
                 \one &  \one & \zero\\
                \zero &  \one &  \one\\  \one &  \zero &  \one
               \end{smallmatrix}\)$
is symmetrically nonsingular over the max-plus algebra,  it is
supertropically singular. So one could be misled to a wrong
interpretation
 without taking the supertropical structure into account.
By~\cite[Theorem~3.5]{IzhakianRowen2008Matrices},
$\QSLn$ yields a monoid under ``ghost surpasses,'' whose subset of
nonsingular elements is precisely $\QSLn$.
Our first goal then is to find the smallest natural monoid $\BQSLn$
which contains $\QSLn$, 
defined in  Definition~\ref{symmon}.  $\BQSLtwo$ is generated by
$\QSL_2$, but for $n\ge 3$, there are matrices in $\BQSLn$
 that are not factorizable, and in particular are not products of matrices from $\QSLn$ (Corollary~\ref{genmon1}).

We also investigate $\QSLn$ from within, by approaching four natural
questions:

\begin{enumerate} \eroman \dispace
  \item  What are the submonoids of $\BQSLn$ contained in $\SL_n$?
  For example, what are the maximal such submonoids?

 \item If $A \in \SL_n$, is it (2-sided) invertible in a suitable submonoid
 of $\BQSLn$?

\item We observe that  the set $\SL_n$ is too broad (not closed under multiplication), and that the set of invertible tropical
 matrices, the generalized permutation matrices, also called ``monomials,'' is too narrow (does not include $E_{i,j}$). Can we find a maximal monoid
 ``between"them?

\item Where precisely between the maximal monoid of (iii) and $\SL_n$ do we lose multiplicativity?
\end{enumerate}

Concerning (i), $\BQSLn$ contains the important submonoid generated
by the $E_{i,j}$. In Theorem~\ref{gen2} we determine this submonoid in terms of upper and lower triangular
elementary matrices.  $\QSLn$ itself has several obvious submonoids,
such as the subgroup of generalized permutation matrices, and the
upper triangular matrices. More interesting is
Example~\ref{submons}(iii), which yields a maximal nonsingular
submonoid, cf.~Theorems~\ref{thm:Jn-nb-closed} and~ \ref{permdiag}
below, built from ``strictly normal'' matrices
(cf.~Definition~\ref{norm}).

Question (ii) is perhaps more intriguing, leading to various intricacies tied to concepts from~\cite{IzhakianRowen2007SuperTropical, IzhakianRowen2008Matrices}.
One of the more intriguing aspects of tropical algebra, is that the
classical theory does not always pass to the tropical. Nonsingular matrices other than generalized permutation matrices cannot be invertible,
but we can get inversion by replacing the identity matrix by a more general version. A
 \textbf{quasi-identity matrix} $\qIl_A: =
 A A^\nb $ has many properties of the identity matrix, being
 nonsingular idempotent with  $
\per(\qIl_ A) = \one,$ even though the product of quasi-identity
matrices need not be idempotent (Example \ref{Bl2}).

 Thus, it is natural to try to write
$\BQSLn$ as the union of monoids having unit element $\qIl_A$ for
various nonsingular matrices $A$. But we have an immediate obstacle:
$\qIr_A: = A^\nb A$ might not equal $\qIl_A$,
cf.~Example~\ref{singsq}. This situation is remedied when $A$ is
\textbf{reversible}, by which we mean $\qIl_A \qIr_A \qIl_A  =
\qIr_A  \qIl_A \qIr_A$. Although this condition may look technical,
it is satisfied whenever $\qIl_A$ and $\qIr_A$  commute, which
occurs rather frequently, and is the most general condition that we
know which leads to workable submonoids, in Theorem~\ref{nonsingpr}.
It holds for $2 \times 2$ matrices when $\qIl_A\in \SL _2$,
cf.~Example \ref{singsq2}, but not for $3 \times 3$ matrices,
cf.~Example \ref{singsq3}.

In view of (iii), perhaps the most
interesting monoids arise via Definition~\ref{sln1} and Lemma~\ref{lem:id.SL3}.
Definition~\ref{sln1} introduce $\SL_n^{\one}$ as the set of normal matrices, up to
products by monomial matrices, which is shown in Theorem~\ref{thm:Jn-nb-closed} and
Theorem~\ref{permdiag} to be a maximal submonoid of~$\SL_n$.
That is, $\SL_n^{\one}$ aims to the nonsingularity property of matrices, rather than their invertability,
which provides a clear and natural approach to future study of tropical~$\GLn$.

Lemma~\ref{lem:id.SL3} defines for any
$A \in \QSLn$  a sub-semigroup of $\BQSLn$ with left unit
element $\qIl_A: = A \adj{A}, $ which contains $\qIl_A A$. This
reflects the important role of quasi-identities $\qIl_A$ in
\cite{IzhakianRowen2009Equations},
 and
``almost'' partitions~$\QSLn$ naturally into a union of submonoids.
We also consider the natural conjugation $B \mapsto A^\nb B A$ in
 \S\ref{conac}. Although some basic properties expected for conjugation fail in this setting, they do
 hold when $A$ is ``strictly normal''.

 In the last section we bring in the role of elementary matrices,
 which is rather subtle. In Lemma~\ref{sns}, we see that a Gaussian
 transformation can turn a nonsingular matrix into a singular
 matrix, which addresses point (iv). Then we show in Theorem~\ref{nonsingpr1} that although not every matrix in $\QSLn$ is itself a product of elementary matrices,
 all matrices in $\QSLn$ are equivalent with respect to multiplication by elementary
 matrices.

\section{Supertropical structures}

\subsection{Supertropical semirings and semifields} We review some basic notions from
\cite{IzhakianRowen2007SuperTropical}.

\begin{defn}\label{super1} A \textbf{supertropical semiring} is a quadruple
$R := (R, \tT, \tG, \nu)$ where  $R$ is a semiring, $\tT \subset R$
is a multiplicative submonoid, and $\tGz:= \tG \cup \{\zero\}
\subset R$ is an ordered semiring ideal, together with a map $\nu: R
\to \tGz $, satisfying $\nu^2 = \nu$ as well as the conditions:
$$a+b = \begin{cases}
   a,  &   \ \nu(a) > \nu(b), \\
\nu(a), &  \ \nu(a) = \nu(b).
\end{cases}$$
\end{defn}

 Note that $R$ contains the
``absorbing'' element $\zero$, satisfying $a+ \zero = a$ and $a
\zero = \zero a = \zero$ for all $a \in R$. The tropical theory
works for $R \setminus \{ \zero\}$, but it is convenient to assume
the existence of $ \zero $ when working with matrices.  We denote
the multiplicative unit of $R$ (and $\tT$) as $\one$.

Interpretation: The monoid $\tT$ is called the monoid of
\textbf{tangible elements}, while the elements of $\tG$ are called
\textbf{ghost elements}, and $\nu: R \to \tG \cup \{\zero\}$ is
called the \textbf{ghost map}. Intuitively, the tangible elements
correspond to the original max-plus algebra, although now $a+a =
\nu(a)$ instead of $a+a = a.$ The ideal $\tGz$ could be identified
with the max-plus algebra together with $-\infty$, but our main
tropical interest is in the tangible elements, which under the extra
conditions of Definition~\ref{superdom} below ``cover'' the ghost
elements by means of the ghost map $\nu$.

We write~$a^\nu$ for $\nu(a)$; $ a \nucong b$ stands for $a^\nu =
b^\nu$. We   define the $\nu$-order on $R$ by
$$a \nuge b  \ \dss {\Leftrightarrow}\  a^\nu \ge  b^\nu \ \text{ and }\
   a \nug b \ \dss {\Leftrightarrow} \ a^\nu > b^\nu,
  $$
The \textbf{ghost surpassing relation} on $R$ is  given by defining
$$a \lmodg  b \dss{\text{if}}   a = b + g \text{ for some } g \in \tGz.
$$

\begin{rem}\label{basicp}
We recall some basic properties concerning the ghost map, for
$a,b\in R$ and $c \in \tT$:
\begin{enumerate} \eroman \dispace
 \item $(a+b)^\nu \ge  a^\nu + b^\nu$;
 \item $(ab)^\nu = a^\nu b = ab^\nu= a^\nu b^\nu ;$
  \item $a \nuge bc^{-1}$ implies  $ac \nuge b;$
 \item  $a \nuge b$ and   $b \nuge a$ implies $a\nucong b$;
 \item    $c \lmodg a$ collapses to the standard
equality $c= a$.
\end{enumerate}
\end{rem}

\begin{defn}\label{superdom}
 A
supertropical semiring $R $ is a \textbf{supertropical semifield}
when $\tT$  is an Abelian group, $R = \tT \cup \tGz    $, and the
restriction $\nu|_\tT: \tT \to \tG$ is onto.
%
%
\end{defn}
%

\begin{example}\label{supex} 
Our main supertropical example is the \textbf{extended tropical
semiring} (cf.~\cite{zur05TropicalAlgebra}), that is, $$R = {\TR}
\cup \{- \infty \} \cup {\TR}^\nu,$$ with $\tT = {\TR}$, $\tG =
{\TR}^\nu$, where the restriction of the ghost map $\nu|_\tT: \R \to
\R^\nu$ is a natural isomorphism. Addition and multiplication are
induced respectively by the maximum and standard summation of the
real numbers~\cite{zur05TropicalAlgebra}. This supertropical
semifield extends the familiar max-plus semifield \cite{ABG}, and
serves in all of our numerical examples, in \textbf{logarithmic
notation} (in particular $\one = 0$ and $\zero = - \infty$).

\end{example}

\begin{rem} Gaubert~\cite{G.Ths},    F.~Baccelli, G.~Cohen, G.J.~Olsder, and J.P.~Quadrat~\cite{BCOQ} , and Akian,
 Gaubert, and  Guterman~\cite[Definition~4.1]{AGG} introduced   the ``symmetrized semiring''  which serves as a common generalization of~\cite{zur05TropicalAlgebra} and their earlier work.
This is a  useful semiring, which is the additive monoid  $\htRR
 := \R \cup \{-\infty\}\times \R \cup \{-\infty\}$ (two copies of the max-plus algebra, taking~$\nu$ to be the identity map), with multiplication
 given by~$(a_1,a_2)(b_1,b_2) = (a_1b_1+a_2b_2, a_1b_2+a_2b_1).$
 It follows~\cite[Remark~4.5]{AGG} that~$\tG' := \{ (a,a) : a \in \htRR\}$ is an ideal of~$\htRR $. \end{rem}

 \begin{lem} The extended tropical semiring $R$ of
\cite{zur05TropicalAlgebra} is  a homomorphic image of the
``symmetrized'' semiring $\htRR$, under the map~$ (a,b) \mapsto a+b.$
In fact, taking  $\tT' = \{(a,\zero): a \in \R\},$ one sees that $\tT' +
\tG'$ is a sub-semiring of $\htRR$ mapping onto $R$, with
$\tT'\mapsto \tT$ and $\tG'\mapsto \tG$.
\end{lem}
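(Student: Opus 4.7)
The plan is to verify directly that $\varphi: \htRR \to R$ defined by $(a,b) \mapsto a+b$ is a semiring homomorphism, and then exhibit $\tT'+\tG'$ as the sub-semiring of interest.

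First I would check the homomorphism axioms. Since the underlying additive monoid of $\htRR$ is componentwise max-plus, preservation of addition is immediate: $\varphi((a_1,a_2)+(b_1,b_2)) = (a_1+b_1)+(a_2+b_2) = (a_1+a_2)+(b_1+b_2)$, which is $\varphi((a_1,a_2))+\varphi((b_1,b_2))$ by commutativity and idempotence of $+$. For multiplication, distributivity in $R$ gives
\[
\varphi\bigl((a_1,a_2)(b_1,b_2)\bigr) = a_1b_1+a_2b_2+a_1b_2+a_2b_1 = (a_1+a_2)(b_1+b_2),
\]
as required. The multiplicative unit $(\one,\zero)$ maps to $\one$ and $(\zero,\zero)$ to $\zero$. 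Crucially, an element $(a,a)\in \tG'$ maps to $a+a = a^\nu \in \tG$, which is precisely what is needed for the second stated assertion $\tG'\mapsto \tG$; and for $\tT' = \{(a,\zero):a\in\R\}$ one has $(a,\zero)\mapsto a\in\tT$, giving $\tT'\mapsto \tT$.

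Next I would check that $\tT'+\tG'$ is a sub-semiring. Additive closure is immediate since $\tT'$ and $\tG'$ are each closed under $+$. For multiplicative closure, I expand a product of two generic elements $(a_1,\zero)+(b_1,b_1)$ and $(a_2,\zero)+(b_2,b_2)$ by distributivity and compute the four pieces: $(a_1,\zero)(a_2,\zero)=(a_1a_2,\zero)\in\tT'$; the cross terms $(a_1,\zero)(b_2,b_2)=(a_1b_2,a_1b_2)\in\tG'$ and $(b_1,b_1)(a_2,\zero)=(a_2b_1,a_2b_1)\in\tG'$; and $(b_1,b_1)(b_2,b_2)=(b_1b_2,b_1b_2)\in\tG'$. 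Since $\tG'$ is closed under addition, the sum lies in $\tT'+\tG'$.

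Finally, surjectivity of $\varphi|_{\tT'+\tG'}$ onto $R = \tT\cup\tGz$ follows already from $\tT'\cup\tG'\subseteq \tT'+\tG'$: any tangible $a\in\tT$ comes from $(a,\zero)\in\tT'$; any ghost $a^\nu\in\tG$ comes from $(a,a)\in\tG'$ via $a+a=a^\nu$; and $\zero$ comes from $(\zero,\zero)$. There is no serious obstacle; the only point that requires attention is the ghost identity $a+a=a^\nu$ in $R$ (as opposed to $a+a=a$ in each max-plus component of $\htRR$), which is exactly the mechanism that forces $\tG'$ to land inside $\tG$ rather than inside $\tT$.
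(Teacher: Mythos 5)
Your multiplicativity computation, the verification that $\tT'+\tG'$ is closed under the two operations, and the surjectivity argument are all fine and coincide with what the paper actually records (the paper's proof consists of exactly the identity $(a,a)\mapsto a+a=a^\nu$ and the expansion of $(a_1,a_2)(b_1,b_2)$, with everything else declared easy). The gap is in the additivity step, which is the one verification the paper does not write down. In your chain
$\varphi\bigl((a_1,a_2)+(b_1,b_2)\bigr)=(a_1+b_1)+(a_2+b_2)=(a_1+a_2)+(b_1+b_2)$,
the inner sums $a_1+b_1$ and $a_2+b_2$ on the left are computed in the max-plus components of $\htRR$, where addition is idempotent ($\nu$ is the identity there, so $a+a=a$), whereas after the rearrangement all the additions must be read in $R$, where $a+a=a^\nu$. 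These are different operations, so ``commutativity and idempotence of $+$'' does not license the rearrangement. The discrepancy is real: in logarithmic notation take $x=(0,-\infty)\in\tT'$ and $y=(0,-1)=(0,-\infty)+(-1,-1)\in\tT'+\tG'$. Then $x+y=(0,-1)$ in $\htRR$, so $\varphi(x+y)=0+(-1)=0\in\tT$, while $\varphi(x)+\varphi(y)=0+0=0^\nu\in\tG$. Hence $\varphi$ fails to preserve addition even on $\tT'+\tG'$.

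So additivity cannot be ``immediate''; it is exactly the delicate point, and it is consistent with the paper's own subsequent caveats that $\htRR$ and $R$ are not isomorphic and that the correspondence is really between the multiplicative structures together with $\tT'\mapsto\tT$, $\tG'\mapsto\tG$ (which is all the paper's proof checks). Note that your argument \emph{would} be correct if the components of the symmetrized object carried the supertropical addition of $R$ (the $\htR$ of Section 2.3, built on $R\times R$), since then every $+$ in your chain is the same associative commutative operation and the rearrangement is legitimate; the failure is specific to $\htRR$, whose components are the idempotent max-plus algebra. At minimum you should flag that the additive compatibility does not hold for $\varphi$ on $\htRR$ as defined, rather than assert it as immediate.
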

\begin{proof} All the verifications are easy, since $(a,a) \mapsto
a+a =  a^\nu$ and
$$(a_1,a_2)(b_1,b_2) = (a_1b_1+a_2b_2, a_1b_2+a_2b_1)\mapsto a_1b_1+a_2b_2+ a_1b_2+a_2b_1 = (a_1+b_1)(a_2+b_2).$$
\end{proof}

 Here one would identify
$\tT$ with the first component of $\htRR$, and $\tG$ with
$\tG'$. This  map is not 1:1, and there is no isomorphism from
$\htRR$ to $R$ (since the multiplicative monoid of $\htRR$
is generated by $\{ \zero\} \times R$, whereas the multiplicative
monoid of  $R$  is the group $ \R \times \R$). $\tG'$ behaves very
similarly in $\htRR$ to $\tG$ in $R$, as indicated in
\cite[Corollaries~4.18 and~4.19]{AGG}. There are some significant
differences, which justify utilizing the supertropical structure:

\begin{itemize}
\item The supertropical semiring also includes other important cases from the tropical theory,
such as  (nonarchimedean) valuations  of the Puiseux series field
$\mathbb K$, where $\tT = \mathbb K$, $\tG$ is the value group, and
$\nu$ is the valuation.

\item As noted in the introduction, linear independence of vectors is
determined in \cite{IzhakianRowen2008Matrices} in terms of the
supertropical structure, not the symmetrized structure.

\item Factorization of supertropical polynomials corresponds to decompositions of affine varieties,
\cite{IzhakianRowen2007SuperTropical}.
\end{itemize}



%
%

\section{Matrices }

%
%
%
%

In this paper we fix a supertropical semifield~$F$, and work
exclusively in the set $\MatnF$ of all $n\times n$ matrices over
$F$. We consider  $\MatnF$  as a multiplicative  monoid, with matrix
multiplication   induced from the operations on $F$. Its unit
element is the \textbf{identity matrix} $\um$ with $\one$ on the
main diagonal and whose off-diagonal entries are $\zero$. We say
that a matrix is \textbf{tangible} if its entries are all  in $\tT
\cup \{ \zero \}$, and \textbf{ghost} if its entries are all in
$\tGz$. We write $\Mat_n(\tGz)$ for the monoid of all ghost
matrices. Also we rely implicitly on Remark~\ref{basicp} throughout
the proofs of this section.

\subsection{Supertropical singularity}
\sSkip

The \textbf{tropical determinant} of a matrix $A=(a_{i,j})$  is
defined as the permanent:
  \begin{equation*}\label{eq:tropicalDet}
 \per(A) = \sum_{\pi \in \mfS_n}    \prod_{i=1}^n a_{i,\pi(i)},
\end{equation*}
where $\mfS_n$ is the set of  permutations of $\{1,\dots,n\}$.

Invertibility of matrices (in its classical sense) is
 limited in the (super)tropical setting.
\begin{rem}\label{genp} The only invertible tropical matrices are the generalized
permutation matrices, defined as the product of an invertible diagonal matrix and a permutation matrix
$P_\pi  $, such that~$(P_\pi)_{i,j}=\one$ when~$ j=\pi(i),$ and~$\zero$ otherwise.
This venerable result going back to~\cite{Rut} and~\cite{Cu}.
 (Note that $P_\pi ^{-1} = P_{\pi^{-1}}.$ See~\cite{DoO} for a rather general version of this result.)
\end{rem}
Thus, limiting nonsingularity to invertible matrices is too
restrictive for a viable matrix theory, and leads to following
definition.

\begin{defn}\label{supertropnonsing} We define a matrix~
$A\in \Mat_n(F)$  to be  \textbf{(supertropically) nonsingular} if
$\per(A) \in \tT$;  otherwise $A$ is \textbf{(supertropically)
singular} (in which case $\per(A) \in \tGz$). 
 \end{defn}
Consequently, a matrix~$A\in \Mat_n(F)$ is
singular if~$\det(A) \lmodg \zero$. This definition  does not match
the semigroup notion of regularity.

Given matrices $A  = (a_{i,j})$ and $B = (b_{i,j})$ in $\Mat_n(F)$,
we write $B \nuge A$ if $b_{i,j} \nuge a_{i,j}$ for all $i,j$, and
$B \nucong A$ if $B \nuge A$ and $B \nule A$.  The ghost surpassing
relation extends naturally to matrices, defined as
 $A \lmodg B$  if $A = B +G$
for some ghost matrix  $G \in \Mat_n(\tGz)$.  (When $A$ is tangible,
$\lmodg$ collapses to the standard equality $A= B$.)

\begin{lem}\label{lem:order1} If $A_1 \geq_\nu A_2$ and $B_1 \geq_\nu B_2$,
 then  $A_1 + B_1\geq_\nu A_2 + B_2$ and $A_1 B_1\geq_\nu A_2 B_2$. In particular, $A B\geq_\nu A$ and  $B A\geq_\nu A$
 if $B\geq_\nu I$.

 Moreover, if $A_1 \lmodg A_2$ and $B_1 \lmodg B_2$,
 then  $A_1 + B_1\lmodg A_2 + B_2$ and $A_1 B_1\lmodg A_2 B_2$, and in particular, $A B\lmodg A$ and  $B A\lmodg A$
 if $B\lmodg I$.
\end{lem}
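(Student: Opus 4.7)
The plan is to verify everything entrywise, reducing the matrix statements to the scalar identities collected in Remark~\ref{basicp}.

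First I would dispose of the $\nuge$ statements. For addition, working componentwise, I note that for scalars the semiring axiom implies $(a+b)^\nu = \max(a^\nu, b^\nu)$ (if $a^\nu > b^\nu$ then $a+b = a$, and if $a^\nu = b^\nu$ then $a+b = \nu(a)$, whose $\nu$-value is again $a^\nu$). Given $(a_1)_{ij}^\nu \ge (a_2)_{ij}^\nu$ and $(b_1)_{ij}^\nu \ge (b_2)_{ij}^\nu$ entrywise, the max is monotone, so $A_1 + B_1 \nuge A_2 + B_2$. For multiplication, the $(i,j)$-entry of $A_1 B_1$ is $\sum_k (a_1)_{ik}(b_1)_{kj}$, whose $\nu$-value equals $\max_k (a_1)_{ik}^\nu (b_1)_{kj}^\nu$ by the same fact together with Remark~\ref{basicp}(ii). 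Since each term is dominated by the corresponding term $(a_2)_{ik}^\nu (b_2)_{kj}^\nu$ from $A_2 B_2$, monotonicity of $\max$ again gives $A_1 B_1 \nuge A_2 B_2$. The special cases $AB \nuge A$ and $BA \nuge A$ when $B \nuge I$ follow by taking $(A_1,B_1) = (A,B)$ and $(A_2,B_2) = (A,I)$ (resp.\ the symmetric choice).

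Next I would handle the $\lmodg$ statements. By definition write $A_1 = A_2 + G$ and $B_1 = B_2 + H$ with $G, H \in \Mat_n(\tGz)$. For addition, $A_1 + B_1 = (A_2 + B_2) + (G + H)$, and $G + H \in \Mat_n(\tGz)$ because $\tGz$ is a semiring ideal (Definition~\ref{super1}), so closed under sums. For multiplication, distribute:
\[
A_1 B_1 = (A_2 + G)(B_2 + H) = A_2 B_2 + (A_2 H + G B_2 + G H).
\]
Each of $A_2 H$, $G B_2$, $G H$ lies in $\Mat_n(\tGz)$ by the ideal property of $\tGz$ applied entrywise (every entry is a sum of products in which at least one factor is a ghost, hence lands in $\tGz$). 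Thus their sum is a ghost matrix, witnessing $A_1 B_1 \lmodg A_2 B_2$. The special cases $AB \lmodg A$ and $BA \lmodg A$ when $B \lmodg I$ again follow by specializing.

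There is essentially no obstacle: the whole argument is a direct unwinding of the definitions together with the ideal property of $\tGz$ and the scalar identities of Remark~\ref{basicp}. The only mildly delicate point is noticing that $(a+b)^\nu = \max(a^\nu, b^\nu)$ so that $\nu$ is monotone with respect to sums, which underlies both monotonicity statements.
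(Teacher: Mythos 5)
Your proof is correct and follows the same route as the paper, whose entire proof is the one-liner ``check the components''; you have simply supplied the entrywise details via Remark~\ref{basicp} and the ideal property of $\tGz$. (One trivial wording slip: in the multiplication step each term of $A_1B_1$ \emph{dominates} the corresponding term of $A_2B_2$, not the other way around, but your conclusion makes clear what you meant.)
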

\begin{proof} Check the components in the multiplication.
\end{proof}

\subsection{Dominant permutations}
\sSkip

\begin{df}\label{pt} A    permutation~$\pi \in S_n$ is \textbf{dominant} for~$A$
if~$\per(A) \nucong a_{1,\pi(1)}a_{2,\pi(2)}\cdots a_{n,\pi(n)}.$ A
dominant permutation~$\pi$ is \textbf{strictly dominant}
if $\prod\limits_i a_{i,\pi(i)}  \nug  \prod\limits_i a_{i,\sig(i)}
$ for any~$\sig  \neq \pi$ in~$\mfS_n$. A  strictly dominant
permutation~$\pi \in S_n$ is \textbf{uniformly dominant}
 if~$ a_{1,\pi(1)} = a_{i,\pi(i)}\ \forall i$ and~$a _{i,j}
<_\nu  a_{i,\pi(i)}\ \forall j \neq \pi(i)$.
\end{df}

Clearly the matrix $A$ is nonsingular if and only if it has a
strictly  dominant permutation,   all of whose corresponding entries
are tangible.

\begin{example}\label{pdom} The permutation $\pi$ is uniformly dominant for
the permutation matrix $P_\pi$. \end{example}

 We specify some useful classes of
matrices, to be used in the present paper, following the terminology
of \cite[\S 3]{B}. (It is only one of several usages of the
terminology ``definite'' in the literature.)
%

A \snor\ matrix (Definition~\ref{norm}) is always nonsingular, while
a normal matrix (and thus also a definite matrix) can be singular.
However, for any of these matrices we have $\per(A) \nucong \one$.

\begin{lem}\label{domin} If the permutations $\pi_t$ are uniformly dominant
for   matrices $A_t$ for $1 \le t \le \ell,$ then $\pi : =
\pi_\ell \circ \cdots \circ  \pi_1$ is uniformly dominant for $A =
A_1 \cdots A_\ell,$ and $\det (A) = \prod\limits_{t=1}\limits^\ell \det(A_t).$
\end{lem}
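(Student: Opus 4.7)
The plan is to track each $(i,j)$-entry of the product $A = A_1\cdots A_\ell$ by expanding it as a sum over intermediate index paths. Writing $A_t = (a^{(t)}_{i,j})$, the uniform dominance of $\pi_t$ provides a single scalar $e^{(t)}$ with $a^{(t)}_{k,\pi_t(k)} = e^{(t)}$ for every row $k$, and $a^{(t)}_{k,j'} <_\nu e^{(t)}$ for $j' \neq \pi_t(k)$. Expanding,
\[
a_{i,j} \;=\; \sum_{(k_1,\ldots,k_{\ell-1})} a^{(1)}_{i,k_1}\, a^{(2)}_{k_1,k_2}\cdots a^{(\ell)}_{k_{\ell-1},j},
\]
and for each starting $i$ there is a distinguished \emph{canonical path} $k_t^* := \pi_t\circ\cdots\circ\pi_1(i)$, which ends at $k_\ell^* = \pi(i)$ and contributes $e^{(1)}\cdots e^{(\ell)}$ to the sum.

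The next step is to show that every non-canonical term is strictly $\nu$-smaller than $e^{(1)}\cdots e^{(\ell)}$. Fix a path $(k_1,\ldots,k_{\ell-1})$, set $k_0:=i$ and $k_\ell:=j$, and let $t$ be the first index at which $k_t \neq \pi_t(k_{t-1})$. Uniform dominance of $\pi_t$ yields $a^{(t)}_{k_{t-1},k_t} <_\nu e^{(t)}$, while every remaining factor satisfies $a^{(s)}_{k_{s-1},k_s} \leq_\nu e^{(s)}$ (being bounded above by its row maximum). Multiplying all factors, the product is $<_\nu e^{(1)}\cdots e^{(\ell)}$. This covers every non-canonical path terminating at $\pi(i)$, and, because $\pi_\ell\circ\cdots\circ\pi_1$ forces the canonical trajectory to end at $\pi(i)$, it also covers every path when $j \neq \pi(i)$ (take $k_\ell = j$ in the argument).

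Consequently, for each $i$ the sum $a_{i,\pi(i)}$ has the canonical term $e^{(1)}\cdots e^{(\ell)}$ as its unique $\nu$-dominant summand, so by the addition rule of Definition~\ref{super1}, $a_{i,\pi(i)} = e^{(1)}\cdots e^{(\ell)}$, a value independent of $i$. For $j\neq \pi(i)$, every summand is $<_\nu e^{(1)}\cdots e^{(\ell)}$, hence $a_{i,j} <_\nu a_{i,\pi(i)}$. Both clauses of Definition~\ref{pt} are thereby verified for $\pi$ on $A$.

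Finally, because $\pi$ is strictly dominant for $A$, the permanent collapses: $\det(A) = \prod_i a_{i,\pi(i)} = (e^{(1)}\cdots e^{(\ell)})^n$. Symmetrically, strict dominance of each $\pi_t$ gives $\det(A_t) = \prod_i a^{(t)}_{i,\pi_t(i)} = (e^{(t)})^n$. Multiplying across $t$ yields $\det(A) = \prod_{t=1}^{\ell} \det(A_t)$. The only real subtlety is ensuring that a deviation at a single step cannot be undone by later factors; this is precisely the row-maximum content of uniform dominance, which caps every factor by $e^{(s)}$ and delivers a strict $\nu$-drop at the first step where the path leaves the canonical trajectory.
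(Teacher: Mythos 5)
Your proof is correct and takes essentially the same route as the paper: both expand each entry of $A_1\cdots A_\ell$ as a sum over index paths, single out the canonical path determined by composing the $\pi_t$, and observe that every other path is strictly $\nu$-dominated. The only difference is that you make the paper's ``all other entries are clearly less'' explicit via the first-deviation argument, which is a welcome tightening but not a new idea.
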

\begin{proof} If $\a _t = a_{i,\pi_t(i)}$ is the entry of $A_t$ for its uniformly dominant permutation $\pi _t$ (for $1 \le i \le n$),
 then $\det(A_t) =  \a _t ^n . $ On the other hand, the matrix  entries contributing to $\det(A)$  are all of the form
 $$a_{i,\pi_1(i)}a_{\pi_1(i),\pi_2\pi_1(i)}a_{\pi_2\pi_1(i),\pi_3\pi_2\pi_1(i)}\cdots
 = \a _1 \cdots \a_\ell,$$  since all other
entries are clearly less. Hence $\pi : = \pi_\ell \circ \cdots
\circ  \pi_1$ is uniformly dominant for $A$, and $\det(A) = \a_1
^n \cdots \a _\ell^n.$
\end{proof}
\noindent
Trying to weaken the hypothesis would bring us into confrontation
with Proposition~\ref{lem:perij} below.
\pSkip

%

%

%
%

We recall the basic fact {\cite[Theorem
3.5]{IzhakianRowen2008Matrices}}  that $\per(AB) \lmodg
\per(A)\per(B).$ As pointed out in \cite[Proposition 2.1.7]{G.Ths},
this result can be seen by means of transfer principles
(\cite[Theorems~3.3 and~3.4]{AGG}), and likewise is sharpened in
{\cite[Corollary~4.18]{AGG1}}; the basic idea already appears in
\cite{RS}. We shall return to this issue in Theorem~\ref{symprod}.
We denote by $S^\times$ the subset of invertible matrices (in classical sense) in a set $S$.

\begin{prop}\label{cor:A.nb.1} $\per(AB) = \per(A)  \per(B) = \per(BA) $ whenever $B \in \Mat_n(R)^\times. $
\end{prop}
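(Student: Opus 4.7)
The plan is to invoke Remark~\ref{genp} (the classification of invertible tropical matrices) to reduce to the case where $B$ is either a permutation matrix or an invertible diagonal matrix, and then to verify multiplicativity of $\per$ against each such factor by direct computation. Write $B = D P_\pi$, where $D = \diag(d_1,\ldots,d_n)$ has tangible invertible entries $d_i\in \tT$ and $P_\pi$ is the permutation matrix of some $\pi\in\mfS_n$. Only the identity permutation contributes to $\per(D)$, so $\per(D) = \prod_i d_i$ is tangible, and it will follow that $\per(B) = \per(D)\per(P_\pi) = \per(D)$.

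For the permutation factor, right multiplication by $P_\pi$ permutes the columns of $A$: $(AP_\pi)_{i,j} = a_{i,\pi^{-1}(j)}$. Reindexing the sum defining the permanent by $\tau = \pi^{-1}\sigma$ yields $\per(AP_\pi) = \per(A) = \per(A)\per(P_\pi)$, and the analogous row-version gives $\per(P_\pi A) = \per(A)$. For the diagonal factor, $(AD)_{i,j} = a_{i,j}d_j$, so using distributivity,
\[
\per(AD) \;=\; \sum_{\sigma\in\mfS_n}\prod_i a_{i,\sigma(i)}\,d_{\sigma(i)} \;=\; \Bigl(\prod_j d_j\Bigr)\per(A) \;=\; \per(D)\per(A),
\]
and the same argument applied to $(DA)_{i,j} = d_i a_{i,j}$ yields $\per(DA) = \per(D)\per(A)$.

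Combining the two cases, $\per(AB) = \per(ADP_\pi) = \per(AD) = \per(D)\per(A) = \per(A)\per(B)$, and likewise $\per(BA) = \per(DP_\pi A) = \per(D)\per(P_\pi A) = \per(D)\per(A) = \per(B)\per(A)$. There is essentially no technical obstacle here: the rigidity of invertible tropical matrices (Remark~\ref{genp}) eliminates the ghost slack present in the general inequality $\per(AB)\lmodg\per(A)\per(B)$, because multiplication by $B = DP_\pi$ merely permutes and rescales rows or columns, so a single permutation from $\mfS_n$ governs the entire contribution from $B$. The one point that requires a moment's care is ensuring each $d_i$ is tangible, so that $\per(D)$ is tangible and can be freely factored out of the sum; this is guaranteed by the invertibility of $B$ in Remark~\ref{genp}.
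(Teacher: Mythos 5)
Your proof is correct, but it follows a genuinely different route from the paper's. The paper never decomposes $B$: it invokes the cited inequality $\per(AB)\lmodg\per(A)\per(B)$ twice --- once for $AB$ and once for $(AB)B^{-1}=A$ --- to obtain $\per(A)\per(B)\lmodg\per(AB)$ as well, and then concludes equality from the two opposite ghost-surpassing relations. You instead use the structure theorem of Remark~\ref{genp} to write $B=DP_\pi$ and verify multiplicativity by hand: reindexing the sum over $\mfS_n$ for the permutation factor and pulling $\prod_j d_j$ out of each term for the diagonal factor. Your argument is more elementary and self-contained (it does not lean on the nontrivial theorem $\per(AB)\lmodg\per(A)\per(B)$), and it makes transparent \emph{why} no ghost slack arises --- a single permutation governs all of $B$'s contribution; the paper's argument is shorter and illustrates the useful general technique of squeezing between two $\lmodg$ relations. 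One small correction: factoring $\prod_j d_j$ out of the sum is pure semiring distributivity and needs no tangibility of the $d_j$; tangibility (which indeed follows from invertibility, since any product involving a ghost is ghost while $\one\in\tT$) is only relevant if one wants to record that $\per(B)\in\tT$, not for the computation itself.
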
 \begin{proof} $\per(AB) \lmodg  \per(A)  \per(B), $
and   $$\per(A) =
\per(AB B^{-1}) \lmodg  \per(AB)  \per(B^{-1}) = \per(AB)
\per(B)^{-1} .$$ Hence~$ \per(A) \per(B)\lmodg \per(AB) ,$ and
thus~$\per(AB) = \per(A)  \per(B) $. The proof that~$\per(BA) =
\per(A)  \per(B)  $ is analogous.
\end{proof}

In particular, this holds when $B$ is a
generalized permutation matrix.

\subsection{Symmetrization}
\sSkip

Following~\cite{BCOQ,Pl} and~\cite[Example~4.11]{AGG}, we define
the
 \textbf{symmetrized semiring}~$\htR $, defined to have
 the same module structure as~$ R\times R,$ but with multiplication
 $$(a_1, a_2)(a_1',a_2') = (a_1 a_1'+ a_2 a_2', a_1 a_2' + a_2
 a_1')$$
 (motivated by viewing  the first component to be in the ``positive''
  copy of~$R$ and the second component to be in the
  negative copy). Define~$R^\circ = \{ (a_1,a_2) \in R: a_1
 \nucong a_2\},$ easily seen to be an ideal of~$\htR.$
Then one defines~$(a_1,a_2) \succeq _\circ (b_1,b_2)$ in~$\htR$ if
there are~$c_i\in R$ with~$c_1 \nucong  c_2$,
 such that~$a_i = b_i +c_i$ for $i = 1,2$.
In other words,~$(a_1,a_2) = (b_1,b_2) + (c_1,c_2)$ where~$(c_1,c_2)\in R^\circ.$
%
%

\begin{lem}\label{passghost} If   $(a_1,a_2) \succeq _\circ (b_1,b_2)$, then $a_1+a_2
\lmodg b_1+b_2.$
\end{lem}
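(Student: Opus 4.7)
The plan is straightforward: just unwrap the definitions. By hypothesis, there exist $c_1, c_2 \in R$ with $c_1 \nucong c_2$ (i.e., $c_1^\nu = c_2^\nu$) such that $a_1 = b_1 + c_1$ and $a_2 = b_2 + c_2$. So the first step is to write
\[
a_1 + a_2 \;=\; (b_1 + c_1) + (b_2 + c_2) \;=\; (b_1 + b_2) + (c_1 + c_2),
\]
using commutativity and associativity of addition in $R$.

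The second step is to identify $c_1 + c_2$ as a ghost. Since $c_1^\nu = c_2^\nu$, the defining rule of a supertropical semiring (Definition~\ref{super1}, the case $\nu(a) = \nu(b)$) gives $c_1 + c_2 = \nu(c_1) \in \tGz$. Setting $g := c_1 + c_2 \in \tGz$, we conclude $a_1 + a_2 = (b_1 + b_2) + g$, which is precisely the definition of $a_1 + a_2 \lmodg b_1 + b_2$.

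There really is no obstacle here; the lemma is a direct computation once one recognizes that the symmetrized condition $c_1 \nucong c_2$ is exactly the condition needed to force $c_1 + c_2$ to collapse to its $\nu$-image under the supertropical addition rule. The point of the lemma, rather, is conceptual: it shows that the symmetrized order $\succeq_\circ$ on $\htR$ is compatible with the ghost surpassing relation $\lmodg$ on $R$ under the natural projection $(a_1,a_2) \mapsto a_1 + a_2$, which will be the bridge used in the subsequent development of the symmetrized version $\QSLn$.
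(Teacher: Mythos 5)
Your proof is correct and follows exactly the paper's argument: write $a_i = b_i + c_i$ with $c_1 \nucong c_2$, observe that the supertropical addition rule collapses $c_1 + c_2$ to $c_1^\nu \in \tGz$, and conclude $a_1 + a_2 = b_1 + b_2 + c_1^\nu \lmodg b_1 + b_2$. Nothing further is needed.
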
 \begin{proof}  If~$a_i =  b_i + c_i, i =1,2$, where~$c_1\nucong  c_2$, then~$a_1 + a_2 = b_1 +b_2 + c_1^\nu,$
  and hence~$a_1+a_2 \lmodg  b_1 +b_2$.
\end{proof}

\subsubsection{The bideterminant and symmetric singularity}

One defines  $$ \per^+(A) = \sum_{\substack{\pi \in
\mfS_n:\\\sgn(\pi) = +1}} \prod_{i=1}^n a_{i,\pi(i)},\ \ \
 \per^-(A) = \sum_{\substack{\pi \in \mfS_n:\\\sgn(\pi) = -1}} \prod_{i=1}^n a_{i,\pi(i)},
$$
 and the \textbf{bideterminant} $ \bid(A) = ( \per^+(A),\;  \per^-(A)) .$
 Note that $\per(A) = \per^+(A) +   \per^-(A).$
\pSkip

 Gaubert proved {\cite[Proposition~2.1.7]{G.Ths}}:
 \begin{prop} \label{symprod} $ \bid(AB)\succeq_\circ  \bid(A)
 \bid(B)$.\end{prop}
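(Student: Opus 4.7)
The plan is to expand $\per^\pm(AB)$ in the spirit of the classical Cauchy--Binet identity, and organize the terms so that the ``genuine permutation'' part recovers $\bid(A)\bid(B)$ exactly, while the ``degenerate'' part contributes the same quantity to both components, hence lies in $R^\circ$.

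First I would write
\[
\per^\pm(AB)=\sum_{\substack{\pi\in\mfS_n\\ \sgn(\pi)=\pm 1}}\sum_{k:[n]\to[n]}\prod_{i=1}^n a_{i,k_i}\,b_{k_i,\pi(i)},
\]
and split each sum into two parts according to whether $k$ is a permutation $\sigma\in\mfS_n$ or $k$ has a repeated value. For the permutation part, I would reindex $\pi=\rho\sigma$ (equivalently, set $\rho=\pi\sigma^{-1}$, using $j=\sigma(i)$ in the $b$-factor) to get $\prod_i b_{\sigma(i),\pi(i)}=\prod_j b_{j,\rho(j)}$ with $\sgn(\pi)=\sgn(\sigma)\sgn(\rho)$. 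Summing over $\sigma$ and $\rho$ with the prescribed sign then yields
\[
\sum_{\sigma}\prod_i a_{i,\sigma(i)}\sum_{\rho:\sgn(\rho)=\sgn(\sigma)\cdot(\pm 1)}\prod_j b_{j,\rho(j)}
=\per^\pm(A)\per^\pm(B)+\per^\mp(A)\per^\mp(B)
\]
for the $+$ case, and similarly $\per^\pm(A)\per^\mp(B)+\per^\mp(A)\per^\pm(B)$ for the $-$ case. These are precisely the two components of $\bid(A)\bid(B)$.

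For the degenerate part (where some $k_i=k_{i'}$ with $i\neq i'$), I would fix such a $k$ and factor out $\prod_j a_{j,k_j}$, leaving $\sum_\pi\prod_j M_{j,\pi(j)}$ where $M_{j,l}=b_{k_j,l}$. Since rows $i$ and $i'$ of $M$ coincide, the involution $\pi\mapsto\pi\circ(i,i')$ is a sign-reversing bijection on $\mfS_n$ that preserves the monomial $\prod_j M_{j,\pi(j)}$. Consequently, the sum over even $\pi$ and the sum over odd $\pi$ are \emph{equal} as elements of $F$ (not merely $\nu$-equivalent), so the total degenerate contribution $g(A,B)$ is the same on the $+$ side and the $-$ side.

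Combining the two parts gives
\[
\bid(AB)=\bid(A)\bid(B)+\bigl(g(A,B),\,g(A,B)\bigr),
\]
and since $(g,g)\in R^\circ$ by definition, this is exactly $\bid(AB)\succeq_\circ\bid(A)\bid(B)$. The main subtlety is keeping the sign bookkeeping straight through the reindexing $\pi\mapsto\rho=\pi\sigma^{-1}$; the pairing argument for repeated rows is completely combinatorial and requires no use of negation, which is why it survives in the supertropical setting.
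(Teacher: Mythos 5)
Your proof is correct, but it is worth noting that the paper does not actually prove this proposition: it quotes Gaubert's thesis and remarks that the result follows from the ``strong transfer principle'' of Akian, Gaubert and Guterman. What you have written is precisely the concrete content behind that citation, carried out by hand: the Cauchy--Binet expansion of $\per^{\pm}(AB)$ over maps $k:[n]\to[n]$, the reindexing $\rho=\pi\sigma^{-1}$ with $\sgn(\pi)=\sgn(\sigma)\sgn(\rho)$ recovering the two components of the symmetrized product $\bid(A)\bid(B)$, and the sign-reversing, monomial-preserving involution $\pi\mapsto\pi\circ(i\ i')$ on the degenerate terms forcing the two residual sums to coincide exactly, so that the correction term lies in $R^\circ$. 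The transfer principle packages exactly this mechanism (an identity $P=Q+S$ over $\mathbb{Z}$ in which $S$ admits a sign-reversing involution descends to any commutative semiring), so your argument buys self-containedness and makes visible \emph{why} the degenerate part is balanced, at the cost of redoing bookkeeping that the cited machinery automates. One small point to tighten: for each non-injective $k$ you must fix a \emph{canonical} colliding pair $(i,i')$ (say the lexicographically least with $k_i=k_{i'}$) before defining the involution, so that it is a genuine well-defined involution on $\{\pi\in\mfS_n\}$ for that $k$; with that choice made explicit, the equality of the even and odd degenerate sums holds on the nose in $F$, as you claim, and the conclusion $\bid(AB)=\bid(A)\bid(B)+(g,g)$ with $(g,g)\in R^\circ$ follows.
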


(This is seen most readily by means of what   Akian,
 Gaubert, and  Guterman \cite{AGG} call the \textbf{strong transfer
 principle}.)
 This result
 leads us to a more refined definition of ``nonsingular matrix.''

  \begin{defn} A matrix~$A\in\Mat_n(R)$ is \textbf{symmetrically singular} if ~$\bid(A)^\nu\in \tGz^\circ.$\end{defn}

\begin{lem} Every symmetrically singular matrix is
singular.\end{lem}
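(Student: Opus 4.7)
The plan is to unwind the definitions and apply the supertropical addition rule. By Definition, symmetric singularity of $A$ means $\bid(A)^\nu = (\per^+(A)^\nu,\;\per^-(A)^\nu) \in \tGz^\circ$, which by definition of $R^\circ$ is exactly the condition $\per^+(A) \nucong \per^-(A)$ (with both $\nu$-values sitting in $\tGz$, which is automatic since $\nu$ lands in $\tGz$).

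Next, recall that $\per(A) = \per^+(A) + \per^-(A)$. Since $\per^+(A)^\nu = \per^-(A)^\nu$, the addition rule from Definition~\ref{super1} — namely $a+b = \nu(a)$ whenever $\nu(a) = \nu(b)$ — applies directly, giving
$$\per(A) \;=\; \per^+(A) + \per^-(A) \;=\; \nu(\per^+(A)) \;\in\; \tGz.$$
Hence $\per(A)$ is a ghost (or zero) element, and therefore $A$ is supertropically singular in the sense of Definition~\ref{supertropnonsing}.

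There is really no obstacle here beyond correctly parsing the notation $\tGz^\circ$ and invoking the supertropical addition rule at the critical step where two $\nu$-equivalent summands collapse into a ghost. The proof will be essentially one display equation.
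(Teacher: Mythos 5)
Your proof is correct and follows the same route as the paper's: unwinding the definition of $\tGz^\circ$ to get $\per^+(A) \nucong \per^-(A)$, then using the supertropical addition rule to conclude that $\per(A) = \per^+(A) + \per^-(A)$ is a ghost, hence $A$ is singular. The paper's own proof is a one-line version of exactly this argument.
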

\begin{proof} $\bid( A) \in \tGz^\circ$ implies $\det( A) \in
\tG_0,$ since both components are equal.
\end{proof}

But a  singular matrix $A$  with tangible entries  can be
symmetrically nonsingular, viz.~$ A= \(\begin{smallmatrix}
                 \one &  \one & \zero\\
                \zero &  \one &  \one\\  \one &  \zero &  \one
               \end{smallmatrix}\).$

\subsection{The adjoint matrix}
\sSkip

As in the classical theory of matrices over a field, the adjoint
matrix is defined over any semiring, and has a major role in
supertropical matrix algebra, as noted in
\cite{IzhakianRowen2008Matrices,IzhakianRowen2009Equations}.

\begin{defn}\label{defn:adjoint}
The $(i, j)$-minor $A_{i,j}$ of a matrix $A = (a_{i,j})$ is obtained
by deleting the $i$-th row and the ~ $j$-th~column of $A$. The
\textbf{adjoint matrix} $\adj{A}$ of $A$ is defined as $(a'_{i,j})$,
where $a'_{i,j} = \per (A_{j,i}) $.
\end{defn}

\begin{prop}[{\cite[Proposition
4.8]{IzhakianRowen2008Matrices}}]\label{prop:adjAB}  $\adj{AB}
\lmodg \adj{B}\adj{A}$ for any $A,B \in \MatnF$.  \end{prop}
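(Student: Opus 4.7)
The plan is to establish the inequality entry-wise. By Definition~\ref{defn:adjoint},
$$(\adj{AB})_{i,j} = \per\bigl((AB)_{j,i}\bigr), \qquad (\adj B\,\adj A)_{i,j} = \sum_{k=1}^{n} \per(B_{k,i})\,\per(A_{j,k}),$$
so I want to show $\per\bigl((AB)_{j,i}\bigr) \lmodg \sum_{k} \per(B_{k,i})\per(A_{j,k})$. The strategy is to replicate the proof of the basic multiplicativity-up-to-ghost identity $\per(XY) \lmodg \per(X)\per(Y)$ (\cite[Theorem~3.5]{IzhakianRowen2008Matrices}), but applied to the non-square matrices $A^{(j)}$ (delete row $j$) and $B_{(i)}$ (delete column $i$), whose product is precisely $(AB)_{j,i}$.

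First I would expand $\per\bigl((AB)_{j,i}\bigr)$ by distributing the permanent formula through each inner sum $(AB)_{u,v} = \sum_{k} a_{u,k} b_{k,v}$:
$$\per\bigl((AB)_{j,i}\bigr) = \sum_{\pi\in \mfS_{n-1}}\ \sum_{(k_1,\ldots,k_{n-1}) \in [n]^{n-1}} \ \prod_{u=1}^{n-1} a_{\hat u,\, k_u}\, b_{k_u,\, \widehat{\pi(u)}},$$
where the hats indicate the natural reindexing around the deleted row $j$ and column $i$. I would then split the choice-function sum into the bijective case (where $(k_1,\ldots,k_{n-1})$ omits exactly one value $k^*\in[n]$) and the degenerate case (some value repeats).

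In the bijective case, a reindexing $\tau := \sigma\circ\pi^{-1}$ (with $\sigma$ the bijection $[n-1]\to[n]\setminus\{k^*\}$ recorded by the $k_u$'s) decouples the $a$-sum from the $b$-sum, so the contribution from a fixed $k^*$ factors as $\per(A_{j,k^*})\per(B_{k^*,i})$; summing over $k^*$ yields exactly the desired right-hand side. The main obstacle is the degenerate case, where I expect to use the following involution: for a tuple with a repeat, pick (say, lexicographically) the first pair $u<u'$ with $k_u = k_{u'} =: k^*$, and pair $\pi$ with $\pi' := (\pi(u)\,\pi(u'))\circ \pi$. Direct inspection shows the corresponding monomial products are identical, since the two $b$-factors $b_{k^*,\widehat{\pi(u)}}$ and $b_{k^*,\widehat{\pi(u')}}$ just swap while all other factors (including both $a_{\hat u,k^*}$ and $a_{\hat{u'},k^*}$) are unchanged. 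Because $\pi\neq \pi'$, this is a fixed-point-free involution, so every summand of the degenerate part appears in an equal pair and the supertropical identity $x+x = x^\nu\in\tGz$ forces the entire degenerate contribution into $\Mat_n(\tGz)$.

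Putting the two cases together gives $\per\bigl((AB)_{j,i}\bigr) = \sum_k \per(B_{k,i})\per(A_{j,k}) + g_{i,j}$ with $g_{i,j}\in\tGz$, which is the required entry-wise ghost-surpassing relation and hence $\adj{AB}\lmodg \adj{B}\adj{A}$. The bookkeeping around the hat-reindexing is tedious but routine; the conceptual content is entirely in the pairing argument that kills the degenerate monomials.
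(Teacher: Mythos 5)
Your argument is correct and is essentially the approach of the source: this paper states Proposition~\ref{prop:adjAB} without proof, deferring to \cite[Proposition~4.8]{IzhakianRowen2008Matrices}, where the result is obtained by exactly this Cauchy--Binet-style expansion of $\per\bigl((AB)_{j,i}\bigr)=\per\bigl(A^{(j)}B_{(i)}\bigr)$ into a bijective part that factors as $\sum_{k}\per(A_{j,k})\per(B_{k,i})$ and a degenerate part forced into $\tGz$ by a fixed-point-free involution pairing equal monomials. No gaps; the hat-reindexing you call routine is indeed the only remaining bookkeeping.
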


One need not have equality, as indicated in \cite[Example~
4.7]{IzhakianRowen2008Matrices}, but  by \cite[Lemma 5.7]{Niv2},
$\adj{AB} =   \adj{B}\adj{A}$  and $\adj{BA} = \adj{A} \adj{B}$ for
any generalized permutation matrix~$B$.

This result leads us to the  question  as to whether $AB$
nonsingular implies $BA$ is nonsingular. But this fails (cf.
\cite[Remark 2.12.]{IdMax}), even when $B = A^{\trn}$, the transpose
matrix:
\begin{example}\label{Bl1}  (Inspired by an idea of Guy Blachar.) \pSkip Let
$ A= \( \begin{array}{cc}
                 1 & 0 \\
                2  & 4
               \end{array}\)$, given in logarithmic notation (cf.~Example~\ref{supex}). Then  $ A^\trn = \( \begin{array}{cc}
                 1 & 2 \\
                0  & 4
               \end{array}\)$ and $A A^\trn = \( \begin{array}{cc}
                 2 & 4 \\
                4  & 8
               \end{array}\)$ which is nonsingular of determinant $10$,
               whereas $ A^\trn  A  = \( \begin{array}{cc}
                 4 & 6 \\
                6  & 8
               \end{array}\)$ is singular
               of determinant~$12^\nu$ (even symmetrically singular  of determinant~$12^\circ$).
\end{example}

%
%
%
%

\subsection{Quasi-identity matrices and the $\nb$-operation}
\sSkip

Since so few matrices are invertible, we need to replace the
identity matrix by a more general notion.

\begin{defn} A matrix $E$ is \textbf{(multiplicatively)
idempotent} if $E^2 = E.$ 
 A \textbf{quasi-identity matrix} is a
nonsingular idempotent matrix. 
\end{defn}

\begin{rem}\label{rem:uncong.QI}  The fact that a quasi-identity matrix $\qI $  is idempotent implies that  it is definite, with
its off-diagonal entries
  in $\tGz$, so this matches {\cite[Definition~4.1]{IzhakianRowen2008Matrices}}.
\end{rem}


 We define the set of all quasi-identity matrices
$$ \QI_n(F) := \{\text{Quasi-identity matrices}\} \subset \Mat_n(F), $$
each simulating the role of the identity matrix.

\begin{rem}   [{\cite[Proposition
4.17]{IzhakianRowen2008Matrices}}]\label{rem:uncong.QIadj}
$\adj{\qI} = \qI$ for every quasi-identity matrix $\qI$.
\end{rem}

On the other hand, $ \QI_n(F)$ is not a monoid.

\begin{example}\label{Bl2} $   $   \pSkip  Let
$ \qI _1 = \( \begin{array}{ccc}
                 \one & \zero & \zero \\
                \zero  &  \one &  b ^\nu  \\
                  \zero  & \zero & \one
               \end{array}\)$, $ \qI _2 = \( \begin{array}{ccc}
                 \one  &  a ^\nu & \zero \\
                \zero  &  \one & \zero \\
                  \zero  & \zero & \one
               \end{array}\)$ with  $a^\nu , b^\nu \neq \zero.$ These matrices are quasi-identities, but $ \qI _1 \qI _2 = \( \begin{array}{ccc}
                 \one &  a^\nu &   \zero\\
                \zero  &  \one &   b ^\nu\\
                  \zero  & \zero & \one
               \end{array}\)$
               is not idempotent, even though it is nonsingular.
\end{example}
\noindent


Our next task is to find  a matrix to replace the inverse. As in
classical theory, the following matrices have an important role in
supertropical matrix algebra.

\begin{defn} When $\per(A) \in \tT$, we  define the matrix $$A^\nb :=  {\per(A)}^{-1}
{\adj{A}}.$$
\end{defn}

Let us collect some information about $A^\nb.$
 We write $A^{\nb \nb}$
for $\big(A^\nb\big)^\nb$.

 \begin{lem}\label{somefac} $ $
  \begin{enumerate} \eroman
  \item  \cite[Theorem 4.9]{IzhakianRowen2008Matrices}.  $A^\nb$ is nonsingular if $A$ is nonsingular.
  \item $A^\nb = A^{-1} $ if $A$ is invertible (in view of
  Remark~\ref{genp}).
  \item   \cite[Proposition
4.17]{IzhakianRowen2009Equations}.
  $\qI ^\nb  = \qI $ for every quasi-identity matrix $\qI $.
 \item {\cite[Lemma
2.17]{IzhakianRowen2009Equations}}. \label{lem:von.N} $ \per(A)
\adj{A} \nuge  \adj{A} A \adj{A} $ for any matrix $A \in \Mat_n(F)$,
and thus  $A^\nb \nuge  A^\nb A A^\nb$ for $A$   nonsingular.
 \item   \cite[Remark 4.2]{IzhakianRowen2009Equations},
 \cite[Theorem~3.5]{Niv},
 \cite[Example~4.16]{IzhakianRowen2008Matrices}.
$A^\nbnb \lmodg A$, but $A^\nbnb \neq A$ in general.
 \item   \cite[Remark 2.18]{Niv}. $A^\nb $ is  definite
(resp.~\snor) whenever the matrix $A$ is definite (resp.~\snor).
\end{enumerate}
 \end{lem}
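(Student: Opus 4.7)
The plan is to verify the six assertions in sequence. Most already have citations in the literature that I would invoke; the role of a self-contained proof is to assemble the key computations that each citation rests on, keeping in mind that the permanent is only ``almost'' multiplicative (Proposition~\ref{cor:A.nb.1}, Proposition~\ref{symprod}) and that equalities involving $\adj{\cdot}$ generally become ghost surpassing relations.

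First I would do (ii) and (iii), which are essentially bookkeeping. For (ii), invertibility forces $A$ to be a generalized permutation matrix $DP_\pi$ (Remark~\ref{genp}); then $\per(A)$ is tangible, $\adj{A}$ is the obvious permutation-matrix rescaling, and $A^{-1} = \per(A)^{-1}\adj{A} = A^\nb$ by inspection. For (iii), a quasi-identity $\qI$ has $\per(\qI)=\one$ (Remark~\ref{rem:uncong.QI}) and $\adj{\qI}=\qI$ (Remark~\ref{rem:uncong.QIadj}), so $\qI^\nb = \one^{-1}\adj{\qI} = \qI$. For (i), I would exhibit a strictly dominant permutation of $\adj{A}$ when $A$ is nonsingular: if $\pi$ is strictly dominant for $A$ with tangible product, then the diagonal of $\adj{A}$ after permuting columns via $\pi^{-1}$ has $(i,i)$-entry $\per(A_{\pi(i),i})$ in which the partial identity permutation dominates strictly and tangibly; multiplying these across $i$ gives a tangible strictly dominant product for $\adj{A}$, so $\per(\adj{A})$ is tangible. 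Rescaling by the tangible scalar $\per(A)^{-1}$ leaves $A^\nb$ nonsingular.

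For (iv), the key entry-wise identity is the Laplace-type expansion: the $(i,j)$-entry of $\adj{A}\,A\,\adj{A}$ is a sum $\sum_{k,\ell}\per(A_{k,i})\,a_{k,\ell}\,\per(A_{j,\ell})$, whose summands each correspond to a ``double cover'' of a permutation together with one extra cycle-closing edge, and hence is bounded $\nu$-wise by $\per(A)\,\per(A_{j,i})$, which is the $(i,j)$-entry of $\per(A)\,\adj{A}$. This is exactly the content of the cited Lemma~2.17 of~\cite{IzhakianRowen2009Equations}, and dividing through by the tangible scalar $\per(A)$ (when $A$ is nonsingular) converts $\nu$-dominance to $A^\nb \nuge A^\nb A A^\nb$. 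For (v), applying (iv) with $A^\nb$ in place of $A$ gives $A^\nb A^\nbnb A^\nb \nule A^\nb$, and after careful tracking one obtains $A^{\nb\nb} \lmodg A$ (this is precisely the argument used in the cited \cite[Theorem~3.5]{Niv}). The non-equality in general is witnessed by any non-tangible example; I would simply cite the existing example in~\cite[Example~4.16]{IzhakianRowen2008Matrices}.

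For (vi), if $A$ is definite then each diagonal minor $A_{i,i}$ retains the identity permutation as its unique dominant permutation with value $\one$, so the diagonal of $\adj{A}$ is $\one$ and hence so is $\per(A)$; rescaling by $\per(A)^{-1}=\one$ does nothing, while each off-diagonal entry $\per(A_{j,i})$ of $\adj{A}$ is $\nu$-bounded by $\one$ because the dominant product in $A$ itself was not bettered by using row $j$ in column $i$. The strictly normal case is the same argument with strict $\nu$-inequalities. The main obstacle in the whole package is (iv): one must push the combinatorial bookkeeping of matching permutations with their ``cycle closures'' into the $\nu$-ordering without any cancellation, which is where the transfer-principle viewpoint of \cite{AGG} becomes helpful for conceptual clarity, even though the direct permanent expansion suffices.
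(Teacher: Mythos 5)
The paper offers no proof of this lemma at all: it is presented purely as a list of citations to \cite{IzhakianRowen2008Matrices}, \cite{IzhakianRowen2009Equations} and \cite{Niv}, so there is no in-paper argument to compare yours against. Your sketches for (i)--(iv) and the definite half of (vi) are essentially the standard arguments behind those citations and are fine: in particular your proof of (i) correctly uses the fact that if $\pi$ is strictly dominant and tangible for $A$ then $a_{\pi^{-1}(i),i}\neq\zero$, so every restricted partial permutation in the minor $A_{\pi^{-1}(i),i}$ extends back to a full permutation and strict dominance of $\pi$ forces $\per(A_{\pi^{-1}(i),i})=\per(A)\,a_{\pi^{-1}(i),i}^{-1}$ tangibly.

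Two spots are shakier. For (v), deriving $A^{\nbnb}\lmodg A$ from the $\nu$-inequality $A^\nb A^{\nbnb}A^\nb\nule A^\nb$ is not a matter of ``careful tracking'': $\lmodg$ is an exact additive relation ($A^{\nbnb}=A+\text{ghost}$), and there is no cancellation available to extract it from a $\nu$-bound on a triple product; the actual proofs in \cite{IzhakianRowen2009Equations} and \cite{Niv} work entrywise on $\adj{\adj{A}}$, exhibiting the term $\per(A)^{n-2}a_{i,j}$ together with ghost-dominated remainders. For (vi), the claim that every off-diagonal entry $\per(A_{j,i})$ is $\nu$-bounded by $\one$ ``because the dominant product was not bettered by using row $j$ in column $i$'' only yields $\per(A_{j,i})\,a_{j,i}\nule\one$, i.e.\ $\per(A_{j,i})\nule a_{j,i}^{-1}$, which exceeds $\one$ precisely in the strictly normal case you need; moreover the bound is false for general definite matrices (e.g.\ $\bigl(\begin{smallmatrix}\one&5\\-10&\one\end{smallmatrix}\bigr)$ in logarithmic notation is definite but its adjoint has an off-diagonal entry $5>\one$). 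The correct argument identifies $\per(A_{j,i})$ with the maximal weight of a path from $i$ to $j$ in $G_A$, which for a strictly normal matrix is a product of edges of weight $<\one$. Since every item carries its citation anyway, these are repairable imprecisions rather than fatal gaps, but as written the arguments for (v) and (vi) would not stand on their own.
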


\begin{df}
For any nonsingular $A \in \Mat_n(F)$, we define $$\qIl_A :=   A
A^\nb  , \qquad \qIr_A :=  A^\nb  A.$$
\end{df}


The following facts are crucial.

 \begin{thm}\label{reviewadj}$ $
 \begin{enumerate} \eroman
 \item  \cite[Theorem
4.12]{IzhakianRowen2008Matrices}. $\qIl_A$ and $\qIr_A$ are
idempotent (although not necessarily equal!).
\item
  \cite[Remark~2.21]{IzhakianRowen2009Equations}.    $ \det(\qIl_A) \nucong \one $ and $\per(\qIr_A) \nucong \one$.
 \item {\cite[Theorem 4.3]{IzhakianRowen2008Matrices}}.
 $\qIl_A$ and $ \qIr_A$ are quasi-identities.
\item \cite[Corollary~4.7]{IzhakianRowen2009Equations}.  $\qIl_{A^{\nb}} =
\qIr_{A}$ and $\qIl_A =
\qIr_{A^{\nb}}.$
\end{enumerate} \end{thm}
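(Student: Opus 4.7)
Each of the four statements has a separate short proof in the literature, but they all rest on the same structural decomposition of $\qIl_A$. The starting observation is that $A\,\adj(A) \lmodg \per(A)\,I$, with equality on the diagonal (by the cofactor expansion of the permanent along the $i$-th row) and with off-diagonal entries which are permanents of matrices possessing a repeated row, hence ghost and $\nule \per(A)$. Dividing through by the tangible $\per(A)$ yields
\[
\qIl_A \;=\; I + G',
\]
where $G'$ is a ghost matrix with off-diagonal entries $\nule \one$ and whose diagonal is absorbed by $I$. This is the object I would analyze throughout the proof.

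For (i), I would square the decomposition: $\qIl_A^2 = I + G' + G' + G'^2 = I + G' + G'^2$, using that $G' + G' = G'$ since $G'$ is already ghost. A bound of the form $G'^2 \nule G'$ (coming from the fact that the entries of $G'$ arise as ratios of minors of $A$ and obey a natural transitivity inequality) then collapses $\qIl_A^2$ back to $I + G' = \qIl_A$, and the argument for $\qIr_A$ is the symmetric transpose version. For (ii), the bound $\per(\qIl_A) \nuge \one$ is immediate from the definite form $I + G'$, while $\per(\qIl_A) \nule \one$ follows by combining idempotence with \cite[Theorem~3.5]{IzhakianRowen2008Matrices}: one obtains $\per(\qIl_A) = \per(\qIl_A^2) \lmodg \per(\qIl_A)^2$, which forces $\per(\qIl_A) \nule \one$. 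For (iii), the identity permutation strictly $\nu$-dominates all other permutations because any non-identity permutation must traverse at least two off-diagonal entries of $G'$, each $\nul \one$, so $\per(\qIl_A) = \one \in \tT$; together with the idempotence from (i), this is a quasi-identity by definition.

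For (iv), I would expand $\qIl_{A^\nb} = A^\nb (A^\nb)^\nb = A^\nb A^{\nb\nb}$ and compare with $\qIr_A = A^\nb A$. One direction, $A^\nb A^{\nb\nb} \lmodg A^\nb A$, is immediate from Lemma~\ref{somefac}(v) (namely $A^{\nb\nb} \lmodg A$) together with the monotonicity of multiplication under $\lmodg$ in Lemma~\ref{lem:order1}. The reverse inclusion is the delicate step: I would obtain it from Lemma~\ref{somefac}(iv), rewritten as $A^\nb A A^\nb \nule A^\nb$, which constrains how $A$ interacts with $A^\nb$ on both sides, combined with the idempotence of $\qIr_A$ from (i) applied to $A^\nb$ in place of $A$. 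The main obstacle is exactly this cancellation: although $A^{\nb\nb}$ need not equal $A$, the discrepancy has to live in ghost positions that become invisible after left-multiplication by $A^\nb$, so turning the surpassing relation into an actual equality requires either a careful entry-by-entry tracking of which ghost contributions survive the product, or a slicker duality argument trading $\qIl$ for $\qIr$ via the transpose and invoking (i)--(iii) already established for $A^\nb$.
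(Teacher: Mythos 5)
The paper offers no proof of this theorem: all four parts are imported by citation from \cite{IzhakianRowen2008Matrices} and \cite{IzhakianRowen2009Equations}, so your attempt has to stand on its own. It does not, because your foundational decomposition is based on a false claim: the off-diagonal entries of $\qIl_A$ (equivalently, of $G'$, equivalently the off-diagonal entries of $A\,\adj{A}$ relative to $\per(A)$) need \emph{not} be $\nule\one$. The paper's own Example~\ref{singsq} gives $\qIl_A=\(\begin{smallmatrix}\one & (-2)^\nu\\ 1^\nu & \one\end{smallmatrix}\)$ in logarithmic notation, whose $(2,1)$ entry $1^\nu$ satisfies $1^\nu\nug\one$, and Example~\ref{singsq3} exhibits an entry $5^\nu$. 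What is actually true — and is the genuine content of \cite[Theorems 4.3 and 4.12]{IzhakianRowen2008Matrices} — is the weaker, cycle-level statement that the \emph{product} of the off-diagonal entries around any nontrivial cycle is $\nul\one$; that is precisely what makes the identity permutation strictly dominant in (iii) and what absorbs $G'^2$ in (i), and it is not something you have proved. Your asserted bound $G'^2\nule G'$ also fails literally: in Example~\ref{singsq} the diagonal of $G'^2$ is $(-1)^\nu\nug\zero$. That particular failure happens to be harmless (it is absorbed by $I$ since $(-1)^\nu\nul\one$), but the off-diagonal inequality you need is exactly the unproved cycle-weight estimate. So (i) and (iii) rest on a premise refuted by the paper's own examples. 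Part (ii) is sound \emph{modulo} (i): the lower bound from the definite form and the upper bound via $\per(\qIl_A)=\per(\qIl_A^2)\lmodg\per(\qIl_A)^2$, hence $\per(\qIl_A)\nuge\per(\qIl_A)^2$, forcing $\per(\qIl_A)\nule\one$, is a correct argument.

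For (iv) you stop at the decisive step. The direction $\qIl_{A^\nb}=A^\nb A^{\nb\nb}\lmodg A^\nb A=\qIr_A$ does follow from Lemma~\ref{somefac}(v) and Lemma~\ref{lem:order1}, as you say. A useful reduction you could have exploited is that mutual ghost surpassing forces equality (if $x=y+g$ and $y=x+g'$ then $x\nucong y$, and a short case analysis on tangible versus ghost entries yields $x=y$), so it would suffice to establish $\qIr_A\lmodg\qIl_{A^\nb}$. But you supply no argument for that direction — neither the entry-by-entry tracking nor the duality argument is carried out — so (iv) remains an announcement of a strategy rather than a proof. As it stands, only part (ii) is conditionally established.
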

%
%
%

In this way, $A^\nb$ could be called a ``right quasi-inverse'' with
respect to $\qIl_A$, and a ``left quasi-inverse''  with respect to
$\qIr_A$. This raises the major question, ``What is the relation
between $\qIl_A$ and $\qIr_A$?''

\begin{lem}\label{lem:Jnb.J}
$\qIl_A=\qIr_A$ for any definite matrix $A$.
\end{lem}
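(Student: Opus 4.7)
The plan is to verify $\qIl_A = \qIr_A$ entry by entry. Since $A$ is definite, $\per(A) = \one$, hence $\qIl_A = A\adj(A)$ and $\qIr_A = \adj(A)A$. By Theorem~\ref{reviewadj}(iii) and Remark~\ref{rem:uncong.QI}, both matrices are quasi-identities, with $\one$ on the diagonal and entries in $\tGz$ off-diagonal. Because $\nu^2 = \nu$ fixes $\tGz$ pointwise, two elements of $\tGz$ with the same $\nu$-value are equal, so it suffices to match diagonal entries exactly and off-diagonal entries up to $\nu$-equality.

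For the diagonal, Laplace expansion along row $i$ (respectively column $i$) gives
\[
(\qIl_A)_{ii} = \sum_k a_{ik}\per(A_{ik}) = \per(A) = \one = \sum_k \per(A_{ki})a_{ki} = (\qIr_A)_{ii}.
\]
For $i \neq j$, I would expand
\[
(\qIl_A)_{ij} = \sum_{\pi\in\mfS_n} a_{i,\pi(j)}\prod_{l\neq j} a_{l,\pi(l)}, \qquad (\qIr_A)_{ij} = \sum_{\pi\in\mfS_n} a_{\pi^{-1}(i),j}\prod_{l\neq \pi^{-1}(i)} a_{l,\pi(l)},
\]
and interpret each summand graph-theoretically: each corresponds to a multigraph on $\{1,\dots,n\}$ whose in- and out-degrees are balanced except for a $+1$ excess out at $i$ and a $+1$ excess in at $j$. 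By standard Eulerian decomposition, such a multigraph splits into a simple path from $i$ to $j$ together with a cycle cover of the complementary vertex set. Since $A$ is definite, every non-identity cycle cover has weight $<_\nu \one$ while the identity cover contributes $\one$, so on both sides the $\nu$-dominant terms are precisely the pairs (simple path $P$ from $i$ to $j$, identity cover on $[n]\setminus V(P)$), each contributing $w(P) := \prod_s a_{v_{s-1},v_s}$. Thus $(\qIl_A)_{ij}^\nu = (\qIr_A)_{ij}^\nu = \max_{P\colon i\to j} w(P)^\nu$.

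The main obstacle is checking that the column-replacement sum for $(\qIr_A)_{ij}$ admits the same path-based characterization as the row-replacement sum for $(\qIl_A)_{ij}$. A convenient way around this is to apply the row-case analysis to the transpose $A^T$ (which is also definite, since the permanent is transpose-invariant and the diagonal is preserved) and use $(\qIr_A)^T = \qIl_{A^T}$ together with the fact that simple paths $i \to j$ in $A$ correspond bijectively to simple paths $j \to i$ in $A^T$ with the same weight, so the maximum $\max_P w(P)$ is preserved.
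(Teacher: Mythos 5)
Your proof is correct, but it takes a genuinely different route from the paper's. The paper's argument is purely order-theoretic and takes two lines: since $\qIl_A$ is idempotent, $\qIl_A=(AA^\nb)^2=AA^\nb A A^\nb \nuge AA^\nb A\nuge A^\nb A=\qIr_A$, where both inequalities come from Lemma~\ref{lem:order1} together with the fact that $A$ and $A^\nb$ are definite and hence $\nuge I$; by symmetry $\qIr_A\nuge \qIl_A$, so the two matrices are $\nu$-equivalent, and then one finishes exactly as you do (diagonal entries are $\one$, off-diagonal entries are ghosts, and $\nu$-equivalent ghosts coincide). Your proof instead computes both sides explicitly: the Eulerian decomposition of each summand into a simple $i\to j$ path plus a cycle cover, together with the fact that definiteness forces every nontrivial cycle to have weight $<_\nu\one$, correctly identifies each off-diagonal entry $\nu$-equivalently with the maximal weight of a simple path from $i$ to $j$, and the transpose trick $(\qIr_A)^{\operatorname{t}}=\qIl_{A^{\operatorname{t}}}$ legitimately transfers the row-expansion analysis to the column side. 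What your approach buys is an explicit combinatorial formula for the entries of $\qIl_A$ (essentially the path characterization the paper only deploys later, in the proof of Proposition~\ref{ED}), at the cost of a longer argument that must be careful about extracting cycles from non-simple walks; the paper's argument is shorter and needs no graph theory, but yields only the equality, not the formula. One small economy: your Laplace-expansion computation of the diagonal entries is redundant, since Remark~\ref{rem:uncong.QI}, which you already invoke for the off-diagonal entries, gives the diagonal entries as $\one$ directly.
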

\begin{proof} By Lemma \ref{lem:order1}, we have
$$  \qIl_A = (A A^\nb)^2 = A  A^\nb A  A^\nb  \nuge  A A^\nb A \nuge A^\nb A = \qIr_A,$$
and, by symmetry,   $ \qIr_A    \nuge \qIl_A$, implying that $
\qIl_A    \nucong \qIr_A$. The
 off-diagonal entries of $\qIl_A$ and $\qIr_A$ are
the same (since they are all ghosts), whereas the diagonal entries
are $\one$;  thus $\qIl_A = \qIr_A.$
\end{proof}

Even when $\qIl_A \ne \qIr_A$, there is one nice situation worth
mentioning.

\begin{df}
For any nonsingular matrix $A$, we define $$ \qI_A = \qIl_A
\qIr_A \qIl_A, \qquad \widetilde{\qI_A} = \qIr_A  \qIl_A \qIr_A .$$
  We say that $A$ is \textbf{reversible} if  $\qI_A = \widetilde{\qI_A} .$
\end{df}

\begin{lem} $\widetilde{\qI_A} =
\qI_{A^\nb}.$\end{lem}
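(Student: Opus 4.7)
The plan is to unwind both sides using only the definitions and Theorem~\ref{reviewadj}(iv), which asserts the duality $\qIl_{A^\nb} = \qIr_A$ and $\qIr_{A^\nb} = \qIl_A$. This duality is exactly what is needed, because replacing $A$ by $A^\nb$ swaps the roles of the left and right quasi-identities, and the defining words for $\qI_A$ and $\widetilde{\qI_A}$ differ precisely by such a swap (left--right--left versus right--left--right).

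Concretely, I would first write out $\qI_{A^\nb} = \qIl_{A^\nb}\,\qIr_{A^\nb}\,\qIl_{A^\nb}$ from the definition of $\qI_{(-)}$ applied to the matrix $A^\nb$ (noting $A^\nb$ is nonsingular by Lemma~\ref{somefac}(i), so the definition applies). Then I would substitute $\qIl_{A^\nb} = \qIr_A$ and $\qIr_{A^\nb} = \qIl_A$ from Theorem~\ref{reviewadj}(iv) into each of the three factors, obtaining $\qIr_A\,\qIl_A\,\qIr_A$, which is the definition of $\widetilde{\qI_A}$.

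There is essentially no obstacle here: the identity is a one-line consequence of the cited duality, and no estimates involving $\lmodg$ or $\nuge$ are needed. The only bookkeeping item is to make sure $A^\nb$ qualifies as a nonsingular matrix so that $\qI_{A^\nb}$ is defined, which is immediate from Lemma~\ref{somefac}(i). As a by-product, this shows that $A$ is reversible if and only if $A^\nb$ is reversible, since the identity $\qI_{A^\nb} = \widetilde{\qI_A}$ combined with the symmetric identity $\widetilde{\qI_{A^\nb}} = \qI_A$ (proved the same way) interchanges the two sides of the reversibility equation under $A \leftrightarrow A^\nb$.
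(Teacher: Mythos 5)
Your proof is correct and is essentially identical to the paper's: both sides are rewritten via Theorem~\ref{reviewadj}(iv), substituting $\qIl_{A^\nb} = \qIr_A$ and $\qIr_{A^\nb} = \qIl_A$ into the three-factor word defining $\qI_{A^\nb}$ to obtain $\qIr_A\,\qIl_A\,\qIr_A = \widetilde{\qI_A}$. The paper writes the same chain of equalities in the other direction; your added remarks on nonsingularity of $A^\nb$ and the reversibility symmetry are fine but not needed.
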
\begin{proof}$ \qIr_A  \qIl_A \qIr_A  =
\qIl_{A^\nb} \qIr_{A^\nb} \qIl_{A^\nb} = \qI_{A^\nb}.$
\end{proof}

Reversibility gains interest from the following result.

\begin{prop}\label{pushdown1} If  $A$ is
reversible and $\qI_A$ is nonsingular, then $\qI_A$ is a
quasi-identity.
  \end{prop}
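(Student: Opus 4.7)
The plan is to show that $\qI_A$ is idempotent; combined with the assumed nonsingularity, this will make $\qI_A$ a quasi-identity by definition. So the entire proof reduces to verifying $\qI_A^2 = \qI_A$, which is a short algebraic manipulation using only two ingredients: the idempotency of $\qIl_A$ and $\qIr_A$ (Theorem~\ref{reviewadj}(i)), and the reversibility hypothesis $\qIl_A\qIr_A\qIl_A = \qIr_A\qIl_A\qIr_A$.

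Concretely, I would expand
\[
\qI_A^2 \;=\; (\qIl_A\qIr_A\qIl_A)(\qIl_A\qIr_A\qIl_A),
\]
collapse the middle $\qIl_A\qIl_A$ using idempotency of $\qIl_A$ to obtain $\qIl_A\qIr_A\qIl_A\qIr_A\qIl_A$, and then identify the inner triple $\qIr_A\qIl_A\qIr_A$ as $\widetilde{\qI_A}$. Applying reversibility replaces it by $\qIl_A\qIr_A\qIl_A$, yielding $\qIl_A(\qIl_A\qIr_A\qIl_A)\qIl_A$. Two more applications of $\qIl_A^2=\qIl_A$ on the outside collapse this back to $\qIl_A\qIr_A\qIl_A = \qI_A$, as required.

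Since the computation is purely symbolic and uses only associativity of matrix multiplication together with the two facts mentioned, there is really no obstacle to speak of — the entire content of the proposition is precisely that reversibility together with the idempotency of $\qIl_A$ and $\qIr_A$ is exactly what is needed to make the ``sandwich'' $\qIl_A\qIr_A\qIl_A$ idempotent. The nonsingularity hypothesis is then invoked only at the very end to upgrade ``idempotent'' to ``quasi-identity'' in the sense of Definition following Remark~\ref{rem:uncong.QI}. Accordingly, the write-up can be limited to the displayed chain of equalities and a one-line conclusion.
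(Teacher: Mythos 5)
Your proof is correct and follows essentially the same route as the paper: expand $\qI_A^2$, collapse repeated factors via the idempotency of $\qIl_A$ and $\qIr_A$, and invoke reversibility to reduce the resulting word back to $\qIl_A\qIr_A\qIl_A$ (your single substitution on the inner triple $\qIr_A\qIl_A\qIr_A$ is marginally more economical than the paper's two applications of reversibility, but it is the same idea). The final appeal to the hypothesis of nonsingularity to conclude that the idempotent $\qI_A$ is a quasi-identity matches the paper's conclusion.
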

  \begin{proof}    $\qI_A$ is idempotent since
   $$\qI_A  ^2 =  \qIl_A \qIr_A \qIl_A \qIr_A \qIl_A =
 (\qIr_A \qIl_A \qIr_A )\qIr_A \qIl_A  =  (\qIr_A \qIl_A \qIr_A )
 \qIl_A = \qIl_A \qIr_A \qIl_A \qIl_A = \qI_A   .$$

  and $\per(\qI_A) = \per( \qIl_A)\per(
\qIr_A)\per( \qIl_A) \nucong \one^3 = \one$ by Theorem~\ref{reviewadj}.
\end{proof}
%

\begin{prop}\label{singsq22} If  $\qIl_A \qIr_A  = \qIr_A \qIl_A,$ then
$A$ is reversible, and $\qI_A = \qIl_A \qIr_A $.
  \end{prop}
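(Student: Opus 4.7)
The plan is to exploit commutativity together with the idempotence of the quasi-identities $\qIl_A$ and $\qIr_A$ (established in Theorem~\ref{reviewadj}(i)) to collapse both triple products $\qI_A$ and $\widetilde{\qI_A}$ to the common expression $\qIl_A \qIr_A$.

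First I would compute $\qI_A$: starting from $\qI_A = \qIl_A \qIr_A \qIl_A$, I use the hypothesis $\qIl_A \qIr_A = \qIr_A \qIl_A$ on the first two factors to get $\qI_A = \qIr_A \qIl_A \qIl_A = \qIr_A \qIl_A$ by idempotence of $\qIl_A$, and then applying the hypothesis once more gives $\qI_A = \qIl_A \qIr_A$. Symmetrically, starting from $\widetilde{\qI_A} = \qIr_A \qIl_A \qIr_A$, commuting the last two factors and using idempotence of $\qIr_A$ yields $\widetilde{\qI_A} = \qIr_A \qIr_A \qIl_A = \qIr_A \qIl_A = \qIl_A \qIr_A$.

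Hence $\qI_A = \qIl_A \qIr_A = \widetilde{\qI_A}$, which is exactly the reversibility condition, and simultaneously establishes the identity $\qI_A = \qIl_A \qIr_A$ claimed in the statement. There is no real obstacle here — the argument is a two-line manipulation once commutativity is granted — so the only thing to be careful about is the bookkeeping on which side the hypothesis is applied, since the underlying semiring has no cancellation.
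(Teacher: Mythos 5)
Your proof is correct and is essentially the paper's own argument: the paper proves the proposition by the single chain $\qIl_A \qIr_A \qIl_A = \qIr_A \qIl_A \qIl_A = \qIr_A \qIl_A = \qIl_A \qIr_A = \qIl_A \qIr_A \qIr_A = \qIr_A \qIl_A \qIr_A$, which is exactly your two computations (collapsing each triple product via commutativity and the idempotence from Theorem~\ref{reviewadj}(i)) concatenated into one line. No gap; nothing further to add.
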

\begin{proof}  $\qIl_A \qIr_A \qIl_A =  \qIr_A \qIl_A \qIl_A =  \qIr_A
\qIl_A= \qIl_A \qIr_A = \qIl_A \qIr_A \qIr_A= \qIr_A \qIl_A \qIr_A
.$
\end{proof}

But here is an example, obtained by modifying an example from
\cite{zur05TropicalAlgebra} showing the complexity of the situation
in general.

\begin{example}\label{singsq} (logarithmic notation) \pSkip Take  $ A= \(\begin{matrix}
                 -1 & -1 \\
                0  & 1
               \end{matrix}\)$
whose determinant is $0 (= \one),$ whereas $    A^2= \(
\begin{matrix}
                 -1 & 0 \\
                1  & 2
               \end{matrix}\)$ is singular.
We have  $A^\nb   = \( \begin{matrix} 1 & -1 \\
                0  & -1 \end{matrix}\)$, and the quasi-identity matrices
 $$   \qIl_A = A A^\nb = \( \begin{matrix}
                 0 & (-2) ^\nu \\
                1^\nu  & 0
               \end{matrix}\),\qquad \qIr_A = A^\nb  A = \( \begin{matrix}
                 0 & 0 ^\nu \\
                -1 ^\nu  & 0
               \end{matrix}\). $$
               We see that $$\qIl_A \qIr_A =  \( \begin{matrix}
                 0 & 0 ^\nu \\
                1 ^\nu & 1^\nu
               \end{matrix}\) \neq    \( \begin{matrix}
                1^\nu  & 0 ^\nu \\
                1^\nu  & 0
               \end{matrix}\) = \qIr_A \qIl_A.$$
               \end{example}

Here is the general situation for $2 \times 2$ matrices, in
algebraic notation.
\begin{example}\label{singsq2} \pSkip Take  $ A= \(\begin{matrix}
                 a & b \\
                c & d
               \end{matrix}\)$
whose determinant $ ad+bc $ is $ \one.$ Then~$    A^\nb= \(
\begin{matrix}
               d & b \\
                c & a
               \end{matrix}\),$ and we get the quasi-identity matrices
 $$ \  \qIl_A = A A^\nb = \( \begin{matrix}
                 \one & (ab) ^\nu \\
                (cd) ^\nu  &  \one
               \end{matrix}\)   \ne   \qIr_A = A^\nb  A = \( \begin{matrix}
                 \one & (bd) ^\nu \\
                (ac) ^\nu  &  \one
               \end{matrix}\). $$
               We see that $$ \text{$\ \qIl_A \qIr_A =  \( \begin{matrix}
                 \one + (a^2bc ) ^\nu  & b^\nu (a+d) \\
                c^\nu (a+d)  &  \one +(bcd^2)^\nu
               \end{matrix}\)$ whereas    $\ \qIr_A \qIl_A =  \( \begin{matrix}
         \one +(bcd^2)^\nu
              & b^\nu (a+d) \\     c^\nu (a+d) & \one  + (a^2bc ) ^\nu
               \end{matrix}\) = (\qIl_A \qIr_A )^\nb,$}$$
               but when either is nonsingular, then they are both equal to $ \( \begin{matrix}
         \one
              & b^\nu (a+d) \\     c^\nu (a+d) & \one
               \end{matrix}\),$ implying $A$ is reversible in this case.
               \end{example}

This raises hope that the theory works when we only encounter
tangible matrices, but a troublesome example exists for $3 \times 3$
matrices.
\begin{example}\label{singsq3} (logarithmic notation, where $-$ denotes $-\infty$) \pSkip Take  $ A= \(\begin{matrix}
                 - & 5 & 0 \\
                0  & -  & - \\ - &     0  & -
               \end{matrix}\)$
whose determinant is $0 = \one.$ Then
 $ A^\nb = \(\begin{matrix}
                 - &   0 & -\\
                 -  & - & 0  \\     0  & - & 5
               \end{matrix}\),$ so

 $$   \qIl_A = A A^\nb = \(\begin{matrix}
                    0  & - & 5^\nu \\ - &   0 & -\\
                 -  & - & 0
               \end{matrix}\), \qquad \qIr_A = A^\nb A = \(\begin{matrix}
                    0  & - & -\\ - &   0 & -\\
                 -  & 5^\nu & 0
               \end{matrix}\),$$
which are both definite (and would be strictly normal if we took $-5$ instead
of $5$). But
              $$ \qIl_A \qIr_A =  \(\begin{matrix}
                    0  & 10^\nu & 5^\nu \\ - &   0 & -\\
                 -  &  5^\nu  & 0
               \end{matrix}\)   \qquad \neq  \qquad  \(\begin{matrix}
                    0  & - & 5^\nu \\ - &   0 & -\\
                 -  &  5^\nu  & 0
               \end{matrix}\)   = \qIr_A \qIl_A.$$
               Furthermore, $ \qIl_A \qIr_A  $ is idempotent and nonsingular,
              and thus a quasi-identity, so $\qIl_A \qIr_A  = (\qIl_A \qIr_A )^\nb$ which does not equal
               ${\qIr_A }^\nb {\qIl_A }^\nb = \qIr_A   \qIl_A.$  \end{example}

 The quasi-identities $\qIl_A$ and $ \qIr_A$ always   satisfy a nice relation in the  $2 \times 2$  case. We say that
 $2 \times 2$ matrices $\mathcal I = \(\begin{matrix}
                \one & u ^\nu \\
                v ^\nu & \one
               \end{matrix}\)$  and  $\mathcal
{I'}=  \(\begin{matrix}
                \one & {u'} ^\nu \\
                {v'} ^\nu & \one
               \end{matrix}\)$ (in algebraic notation) are \textbf{paired} if   $u^\nu v^\nu = {u'}^\nu {v'}^\nu.$

\begin{lem}\label{singsq31}  For any $2 \times 2$ matrix $A$ of determinant $\one,$ the
quasi-identities $\qIl_A$   and $\qIr_A$ are paired. Conversely, if
$F$ is closed under square roots then, given paired quasi-identity
matrices $\mathcal I$ and $\mathcal I',$ there is  a $2 \times 2$
matrix $A$ of determinant $\one,$ such that $\mathcal I = \qIl_A$
and $\mathcal I'= \qIr_A$.\end{lem}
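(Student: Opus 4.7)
The plan is to handle the two directions separately, both reducing to direct computation from the explicit $2 \times 2$ formulas displayed in Example~\ref{singsq2}.

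For the forward direction, given $A = \left(\begin{smallmatrix} a & b \\ c & d \end{smallmatrix}\right)$ with $\per(A) = ad+bc = \one$, Example~\ref{singsq2} already gives
\[
\qIl_A = \begin{pmatrix} \one & (ab)^\nu \\ (cd)^\nu & \one \end{pmatrix},\qquad \qIr_A = \begin{pmatrix} \one & (bd)^\nu \\ (ac)^\nu & \one \end{pmatrix}.
\]
Then the pairing is immediate: $(ab)^\nu(cd)^\nu = (abcd)^\nu = (bd)^\nu(ac)^\nu$.

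For the converse, given paired $\mathcal I = \left(\begin{smallmatrix} \one & u^\nu \\ v^\nu & \one \end{smallmatrix}\right)$ and $\mathcal I' = \left(\begin{smallmatrix} \one & {u'}^\nu \\ {v'}^\nu & \one \end{smallmatrix}\right)$, I lift $u,v,u',v'$ to tangible representatives (possible since $\nu|_\tT$ is onto $\tG$), and I seek tangible $a,b,c,d$ with $ad+bc=\one$ matching the four products under $\nu$. The natural ansatz is to set $ad=\one$, so $d=a^{-1}$; then the requirements $(bd)^\nu = {u'}^\nu$ and $(ac)^\nu = {v'}^\nu$ force $b = u'a$ and $c=v'/a$, and the remaining conditions $(ab)^\nu = u^\nu$ and $(cd)^\nu = v^\nu$ both reduce to $a^2 \nucong u/u'$. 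Since $\tT$ is an abelian group and $F$ is closed under square roots, such a tangible $a$ exists.

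It remains to verify that the resulting $A$ actually has $\det A = \one$ (tangibly) and that $\qIl_A = \mathcal I$, $\qIr_A = \mathcal I'$. For the determinant, $ad+bc = \one + u'v'$; since $\mathcal I'$ is a \emph{non\-singular} idempotent, $\per(\mathcal I') = \one + (u'v')^\nu = \one$ forces $(u'v')^\nu \nul \one$, so indeed $ad+bc = \one$. The quasi-identity formulas from Example~\ref{singsq2} then give $\qIl_A = \mathcal I$ and $\qIr_A = \mathcal I'$ directly; the ghost-equivalence $(cd)^\nu = (v'u'/u)^\nu = (u'v')^\nu/u^\nu = (uv)^\nu/u^\nu = v^\nu$ is where the pairing hypothesis is used.

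The only genuine subtlety, and the step I would flag as the main obstacle, is recognizing that the pairing condition $u^\nu v^\nu = {u'}^\nu {v'}^\nu$ is exactly what guarantees that the four constraints on $(a,b,c,d)$ are consistent (i.e., that the single scalar $a^2\nucong u/u'$ simultaneously produces the correct $\nu$-values for both $ab$ and $cd$). Everything else is bookkeeping, with the tangibility of $\det A$ coming for free from idempotency of $\mathcal I'$.
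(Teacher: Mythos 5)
Your proof is correct and follows essentially the same route as the paper's: the forward direction is the direct computation $(ab)(cd)=abcd=(bd)(ac)$ from Example~\ref{singsq2}, and the converse is the same square-root construction (the paper takes $a=\sqrt{u/u'}$, $b=\sqrt{uu'}$, $c=\sqrt{vv'}$, which matches your ansatz up to $\nu$-equivalence). Your explicit check that $ad+bc$ is tangibly $\one$, via nonsingularity of $\mathcal I'$, is a detail the paper leaves implicit.
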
\begin{proof}  After a
permutation, we may write $ A= \(\begin{matrix}
                a & b \\
                c  & a^{-1}
               \end{matrix}\)$ with $bc <_{\nu} \one$.
                Then  $$\qIl_A = \(\begin{matrix}
                \one & (ab) ^\nu \\
                (a^{-1}c) ^\nu & \one
               \end{matrix}\) \dss{\text{and}}
\qIr_A= \(\begin{matrix}
                \one &  (a^{-1}b) ^\nu \\
                (ac) ^\nu & \one
               \end{matrix}\)$$  are paired since~$(ab)(a^{-1}c) = bc = (a^{-1}b)(ac).$
               Conversely, given $uv' = u'v$ we take $b =
               \sqrt{uu'},$ $c = \sqrt{vv'},$ and $a = \sqrt {\frac
               u{u'}}$ to get $ab = u,$ $a^{-1}b = u',$ $ac = \sqrt
               {\frac{uvv'}{u'}} = \sqrt{v^2} = v,$  and $a^{-1} c = v'$.

\end{proof}

%
%

 \begin{lem}[{\cite[Lemma
2.17]{IzhakianRowen2009Equations}}] \label{lem:von.N1}
 $A^\nb \le A^\nb A A^\nb$ for any matrix $A \in \MatnF$.
 \end{lem}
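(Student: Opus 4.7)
The plan is to exploit that $\qIl_A = A A^\nb$ is a quasi-identity, whose diagonal is $\one$ and whose off-diagonal entries lie in $\tGz$; hence $\qIl_A \nuge I$ entrywise. Once this is noted, the lemma reduces to one application of the order-compatibility of matrix multiplication.

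Concretely, I would first invoke Theorem~\ref{reviewadj}(iii) together with Remark~\ref{rem:uncong.QI} to record that $\qIl_A$ has $\one$ on the diagonal and ghost (or $\zero$) entries off the diagonal, so $\qIl_A \nuge I$. Then Lemma~\ref{lem:order1} (``$B \nuge I$ implies $AB \nuge A$'', applied with $B := \qIl_A$) yields $A^\nb \qIl_A \nuge A^\nb$. Rewriting $A^\nb \qIl_A = A^\nb(A A^\nb) = A^\nb A A^\nb$ by associativity gives $A^\nb A A^\nb \nuge A^\nb$, which is the claimed inequality.

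If the statement is intended for a possibly singular matrix $A$ (so that $A^\nb$ need not exist and one works with $\adj{A}$ instead), the same reasoning applies at that level: Laplace expansion of $\per(A)$ along row $i$ shows that the $(i,i)$-entry of $A\adj{A}$ equals $\per(A)$, so $A\adj{A} \nuge \per(A) I$; left-multiplying by $\adj{A}$ then gives $\adj{A} \cdot A \cdot \adj{A} \nuge \per(A) \adj{A}$, which is the scaled form of the lemma. I expect no real obstacle here---once one settles on the interpretation of $\le$ as the $\nu$-order (which, in conjunction with Lemma~\ref{somefac}(iv), then yields the full equality $A^\nb \nucong A^\nb A A^\nb$), the proof reduces to a single invocation of Lemma~\ref{lem:order1} applied to $\qIl_A \nuge I$.
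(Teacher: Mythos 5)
Your argument is correct, and it is worth noting that the paper itself offers no proof of this statement at all: it is quoted verbatim from \cite[Lemma~2.17]{IzhakianRowen2009Equations}, so there is nothing internal to compare against. Your derivation is a clean, self-contained justification using only results already recorded in the paper: $\qIl_A$ is a quasi-identity (Theorem~\ref{reviewadj}(iii)), hence definite with off-diagonal entries in $\tGz$ (Remark~\ref{rem:uncong.QI}), so $\qIl_A \nuge I$, and Lemma~\ref{lem:order1} gives $A^\nb A A^\nb = A^\nb\,\qIl_A \nuge A^\nb$. Your fallback for singular $A$ via $\adj{A}$ and the (exact) Laplace expansion $(A\adj{A})_{i,i}=\per(A)$ is also correct and is indeed needed to make sense of ``any matrix,'' since $A^\nb$ is only defined when $\per(A)\in\tT$. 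Two small observations. First, you are right that the paper never defines $\le$; the $\nu$-order is the only reading under which the paper's other half, Lemma~\ref{somefac}(iv), combines with this one to give $A^\nb \nucong A^\nb A A^\nb$, so your interpretation is the sensible one. Second, your own decomposition actually yields something slightly stronger for free: writing $\qIl_A = I + G$ with $G$ a ghost matrix gives $A^\nb A A^\nb = A^\nb + A^\nb G$ with $A^\nb G$ ghost, i.e.\ $A^\nb A A^\nb \lmodg A^\nb$, which is the ghost-surpassing form one would expect the cited Lemma~2.17 to assert; you may as well record that.
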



%
%

%
%
%

\begin{defn} A matrix $A$ is
\textbf{$\nb$-regular}
 if $A = A A^\nb A$.
 \end{defn}

\begin{example} $A A^\nb A = \qIl_A A = A \qIr_A$ is $\nb$-regular (but not necessarily reversible, nor nonsingular). Every
quasi-identity matrix is $\nb$-regular as well as reversible.
\end{example}

Since $A A^\nb A$ shares many properties with $A$ (for example,
yielding the same quasi-identities $\qIl_A$ and~$\qIr_A$ and other
properties concerning solutions of equations in
~\cite{IzhakianRowen2009Equations}), the passage to $A A^\nb A$
is a closure operation which is of particular
interest to us.

%


\section{Special linear supertropical matrices}

As stated earlier, our main objective is to pinpoint the most viable tropical
version of $\SLn$. The obvious attempt is the set $$ \QSLn(F) := \{
A \in \Mat_n(F) \ds : \per(A) = \one \}$$
 of matrices with supertropical determinant $\one$, which we call \textbf{special linear matrices}.

\subsection{The monoid generated by $ \QSLn(F)$} \sSkip

$ \QSLn(F)$ is not a monoid, as noted in Equation~\eqref{badprod}.
Thus, we would like to determine the monoid generated by $\QSLn(F)$,
as well as the submonoids of $\QSLn(F)$.

\begin{rem} \label{factorin} For the matrices $A
\in  \QSLn$ and $B\in \QSLn^\times$, we have   $AB, BA \in \QSLn,$ by Proposition~\ref{cor:A.nb.1}.
\end{rem}

Thus, any difficulty  involves noninvertible matrices of $
\QSLn(F).$  The following observation ties this discussion to
definite matrices.

\begin{lem} \label{factorout}$ $ \begin{enumerate}\eroman \dispace
 \item Any nonsingular matrix $A$ is the product $P A_1$ of a  generalized
 permutation matrix~$P$ with a definite
matrix $A_1 $.

 \item  Any   matrix $A$ of $\QSLn(F)$ is the product $PA_1$ of
 a  generalized
 permutation matrix $P \in \QSLn(F)$ with a  definite
matrix $A_1 \in \QSLn(F)$. Likewise we can write $A = A_2 Q$ for a
generalized permutation matrix $Q$ in $\QSLn$ and $A_2$ a definite
matrix.
\end{enumerate}
\end{lem}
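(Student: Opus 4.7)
The strategy for (i) is to absorb the dominant permutation of $A$ into a generalized permutation matrix acting on the left. Since $A$ is nonsingular, by the observation following Definition~\ref{pt} it admits a \emph{unique} strictly dominant permutation $\pi$, and each entry $d_k := a_{k,\pi(k)}$ is tangible, hence invertible in the semifield~$F$. Set $D := \diag(d_1,\dots,d_n)$ and $P := DP_\pi$, a generalized permutation matrix in the sense of Remark~\ref{genp}, with $P^{-1} = P_{\pi^{-1}}D^{-1}$, and define $A_1 := P^{-1}A$.

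A direct index computation yields $(A_1)_{i,k} = d_{\pi^{-1}(i)}^{-1}\, a_{\pi^{-1}(i),k}$, so in particular the diagonal entries satisfy $(A_1)_{\pi(m),\pi(m)} = d_m^{-1} a_{m,\pi(m)} = \one$ for every $m$. For any $\sigma \in \mfS_n$, reindexing by $m := \pi^{-1}(i)$ gives
\[
\prod_i (A_1)_{i,\sigma(i)} \;=\; \left(\prod_{m} d_m^{-1}\right) \prod_{m} a_{m,(\sigma\pi)(m)},
\]
which is a fixed tangible scalar times the contribution of $\sigma\pi$ to $\per(A)$. Because $\sigma \mapsto \sigma\pi$ is a bijection of $\mfS_n$ and $\pi$ is the unique strictly dominant permutation of $A$, the identity $\sigma = \id$ is the unique strictly dominant permutation of $A_1$. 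Combined with the computed diagonal, this shows $A_1$ is definite, proving (i).

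For (ii), assume $A \in \QSLn$. By Proposition~\ref{cor:A.nb.1} applied to $DP_\pi$ with $P_\pi$ invertible, $\per(P) = \per(D)\per(P_\pi) = \prod_m d_m = \per(A) = \one$, so $P \in \QSLn$; and then $\per(A_1) = \per(P^{-1})\per(A) = \one$, so $A_1 \in \QSLn$. The right factorization $A = A_2 Q$ follows by applying (i)--(ii) to $A^{\trn}$ (whose unique strictly dominant permutation is $\pi^{-1}$) to write $A^\trn = P'A_1'$, and transposing: $A = (A_1')^\trn (P')^\trn$. Transposition clearly preserves both the generalized-permutation-matrix property and membership in $\QSLn$ (the permanent is invariant under transposition), and sends definite matrices to definite matrices, since the diagonal is preserved and the bijection $\sigma \leftrightarrow \sigma^{-1}$ on $\mfS_n$ matches the permutation contributions of $A_1'$ with those of $(A_1')^\trn$, preserving strict dominance of the identity.

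The only delicate step is the uniqueness part of the claim that $\id$ is strictly dominant in $A_1$; everything else is bookkeeping. That step is exactly where nonsingularity of $A$ is used, through the strict dominance of $\pi$ in $A$ that it furnishes.
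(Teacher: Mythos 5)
Your proof is correct and follows the same route as the paper's (much terser) argument: left-multiply by a permutation matrix to move the strictly dominant permutation onto the diagonal, rescale by a diagonal matrix to make those entries $\one$, and invoke Proposition~\ref{cor:A.nb.1} for the determinant bookkeeping in part (ii). Your added details — the index computation showing $\id$ is the unique strictly dominant permutation of $A_1$, and the transpose argument for the right factorization $A=A_2Q$ — are accurate fillings-in of steps the paper leaves implicit.
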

\begin{proof}   Multiplying by a permutation matrix puts the
dominant permutation of~$A$ on the diagonal, which we can make
definite by multiplying by a diagonal matrix. If~$A \in \QSLn(F)$
then~$A_1 \in \QSLn(F)$, in view of Proposition~\ref{cor:A.nb.1}.
\end{proof}

%

The point of this lemma is that the process of passing a matrix of
$\QSLn$ to definite form takes place entirely in $\QSLn$, so the
results of \cite{Niv} are applicable in this paper, as we shall see.

\begin{defn}\label{symmon}
 $$ \BQSLn(F) :=
\big\{ A \in \Mat_n(F) \ds :  \det(A) \lmodg \one\}.$$ We write
$\QSLn$ and $\BQSLn$ for $\QSLn(F)$ and $\BQSLn(F)$ respectively, when $F$ is
clear from the context.
\end{defn}

 In the spirit of \cite[Proposition
3.9]{IzhakianRowen2008Matrices}, but using symmetrization, we can
turn to $\succeq_\circ$, and define:

\begin{defn}\label{genmon}
 $$ \QSLn(F)_\circ := \QSLn(F)\cup
\big\{ A \in \BQSLn(F) : \  \bid(A)= (\alpha, \beta) \
\text{where} \  \alpha = \one^\nu >_\nu   \beta\text{ or } \beta = \one^\nu   >_\nu \alpha
\}.$$
\end{defn}

 Thus $\QSLn(F) \subset  \QSLn(F)_\circ \subset \BQSLn(F)$, so we could increase the scope of
 the theory by considering $\QSLn(F)_\circ $ instead of $\QSLn(F)$.

%

Here is a generic sort of example.

\begin{example} Consider two rank 1
matrices $  \(\begin{matrix}
                a  & \zero \\
              b  & \zero
               \end{matrix}\)$ and $  \(\begin{matrix}
                c   & d \\
              \zero  & \zero
               \end{matrix}\)$.
               Their product is $  \(\begin{matrix}
                ac  & ad \\
              bc & bd
               \end{matrix}\)$
               whose bideterminant is $(abcd, abcd)\in F^\circ.$

               Although the first two matrices are singular, they
               ``explain'' the following modification:
               The product of the matrices
               $  \(\begin{matrix}
                a  & \zero \\
              b  & a^{-1}
               \end{matrix}\)$ and $  \(\begin{matrix}
                c   & d \\
              \zero  & c^{-1}
               \end{matrix}\)$, both from $ \QSLn(F)$, is
            $  \(\begin{matrix}
                ac  & ad \\
              bc & bd + a^{-1}c^{-1}
               \end{matrix},\)$
               which is $  \(\begin{matrix}
                ac  & ad \\
              bc & bd
               \end{matrix}\)$ when $abcd > \one$.
Put another way, given any   $u,v,u',v'\in F$ satisfying $uv' =
u'v$, we can find two matrices whose product is $  \(\begin{matrix}
                u  & u' \\
              v & v'
               \end{matrix}\)$,
               namely take $a = \one,$ $c = u,$ $d = u'$, and $b =
               \frac{v}{u}.$
Thus, every $2 \times 2$ matrix in $\BQSL_2(F) $ is a product of two
matrices in $ \QSL_2(F)$.
\end{example}

This yields:

\begin{prop} $\BQSL_2(F) $ is the submonoid of matrices generated by
$ \QSL_2(F)$.
\end{prop}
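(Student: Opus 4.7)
The plan is to verify the two inclusions separately.

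For the forward inclusion $\langle \QSL_2(F) \rangle \subseteq \BQSL_2(F)$, it suffices to show that $\BQSL_2(F)$ is itself a submonoid of $\Mat_2(F)$ containing $\QSL_2(F)$. The identity matrix $\um$ lies in $\QSL_2(F) \subseteq \BQSL_2(F)$, and closure under multiplication follows from the fundamental inequality $\det(A_1 A_2) \lmodg \det(A_1)\det(A_2)$ \cite[Theorem~3.5]{IzhakianRowen2008Matrices}: if $A_1, A_2 \in \BQSL_2(F)$, then $\det(A_1 A_2) \lmodg \det(A_1)\det(A_2) \lmodg \one$, with Lemma~\ref{lem:order1} invoked to pass the $\lmodg$ relation through products.

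For the reverse inclusion $\BQSL_2(F) \subseteq \langle \QSL_2(F) \rangle$, the core of the argument is the construction displayed in the preceding example. Given $A = \begin{pmatrix} u & u' \\ v & v' \end{pmatrix} \in \BQSL_2(F)$ with $u \in \tT \setminus \{\zero\}$, I would factor
$$A = \begin{pmatrix} \one & \zero \\ v/u & \one \end{pmatrix} \begin{pmatrix} u & u' \\ \zero & u^{-1} \end{pmatrix}.$$
Both factors have determinant $\one$ and hence lie in $\QSL_2(F)$. Checking that the product equals $A$ amounts to verifying the $(2,2)$-entry identity $u^{-1}u'v + u^{-1} = v'$, equivalently $uv' = u'v + \one$; this should follow from the defining condition $\det(A) = uv' + u'v \lmodg \one$ via a short case analysis on the relative $\nu$-values of $uv'$ and $u'v$.

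The step I expect to be the main obstacle is precisely this case analysis, which must cover every configuration allowed by $\det(A) \lmodg \one$: (i) the tangible case $\det(A) = \one$, where $A \in \QSL_2(F)$ already and the trivial factorization $A = A \cdot \um$ suffices; (ii) the symmetrically singular case $uv' = u'v$ of common $\nu$-value at least $\one$, directly covered by the construction above; and (iii) degenerate configurations where $u = \zero$ or $u$ is purely ghost, forcing the $(1,1)$-pivot to come from a different entry. For (iii), one first multiplies $A$ on the left or right by a suitable generalized permutation matrix in $\QSL_2(F)$, which preserves determinants by Proposition~\ref{cor:A.nb.1} and hence membership in $\BQSL_2(F)$, in order to reduce to a form whose $(1,1)$-entry is tangible and invertible. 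Once these reductions are in place, the explicit factorization above produces the desired decomposition into two factors from $\QSL_2(F)$.
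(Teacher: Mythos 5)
Your strategy is essentially the paper's own: the forward inclusion via $\det(A_1A_2)\lmodg\det(A_1)\det(A_2)$, and the reverse inclusion via an explicit factorization of a general element of $\BQSL_2(F)$ into a unipotent lower-triangular matrix times an upper-triangular matrix, each of determinant $\one$ (the paper's ``key computation'' is the unipotent-times-unipotent version of your product, and the preceding example is your construction with $a=\one$). The forward inclusion is fine.

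The genuine gap is in the step you defer to a ``short case analysis'': the identity $uv'=u'v+\one$ does \emph{not} follow from $uv'+u'v\lmodg\one$. Take $u=u'=v=v'=\one$, so that $A$ is the all-$\one$ matrix with $\det(A)=\one^\nu\lmodg\one$; then $u'v+\one=\one+\one=\one^\nu\neq\one=uv'$, and your factorization returns $\left(\begin{smallmatrix}\one&\one\\\one&\one^\nu\end{smallmatrix}\right)\neq A$ (this is exactly Equation~\eqref{badprod} with $a=b=\one$). So your case (ii) is \emph{not} ``directly covered'' when the common $\nu$-value of $uv'$ and $u'v$ is exactly $\one$, and no generalized-permutation reduction in your case (iii) rescues matrices such as $\diag(\one^\nu,\one)$. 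Worse, the failure is not reparable by a cleverer factorization: every $B\in\QSL_2(F)$ has exactly one component of $\bid(B)$ equal to $\one$ tangibly and the other $\nu$-smaller; this form is preserved under products in the symmetrized semiring, and Proposition~\ref{symprod} then shows that no product of elements of $\QSL_2(F)$ can have bideterminant $(\one,\one)$. Hence the all-$\one$ matrix lies in $\BQSL_2(F)$ but not in the submonoid generated by $\QSL_2(F)$. The paper's own computation carries the side conditions $a>_\nu\one$ (resp.\ $abcd>\one$), which silently exclude exactly these boundary matrices, so the statement really requires that restriction; your write-up inherits the same defect but, unlike the paper, explicitly asserts that the case analysis closes, which is where the proof breaks.
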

\begin{proof}  The key computation is
$  \(\begin{matrix}
                \one  & b\\
               a  b  ^{-1} & a
               \end{matrix}\) = \(\begin{matrix}
                \one  & \zero \\
               a  b  ^{-1} &  \one
               \end{matrix}\)  \(\begin{matrix}
                \one  &
              b  \\ \zero & \one
               \end{matrix}\)$
              when $a >_{\nu} \one,$  a special case of the previous example.
\end{proof}

On the other hand, for larger $n$, we have room for obstructions.

\begin{example}\label{genmon11} For $n \ge 3,$ suppose $A$ has the form
 $$  \(\begin{matrix}
                a_{1,1}  & a_{1,2} & \zero & \zero & \dots & \zero \\
             \zero & a_{2,2}  & a_{2,3} & \zero   & \dots & \zero\\
            \zero &  \zero & a_{3,3}  & a_{3,4}  & \dots & \zero
            \\  \vdots  & & & \ddots & \ddots &  \vdots
\\
             \zero &   \dots & \zero & \zero &  a_{n-1,n-1} &   a_{n-1,n}
 \\
              a_{n,1} &\zero &   \dots & \zero & \zero &    a_{n,n}
              \end{matrix}\).$$
               Then $A$ cannot be factored into $A_1A_2$ unless one
               of the $A_i$ is invertible.
\end{example}

More generally, we have:

\begin{prop} \label{nonfact}  {\cite[Proposition 3.2]{Niv2}} Suppose $\pi, \sigma \in S_n$ such that there exists an integer $0 < t < \frac n2$  for
which,  for all $i$,  $\pi(i) \equiv \sigma(i) + t \pmod n.$ Then
any $n \times n$ matrix $A = \sum _{i=1}^n (a_{i,\pi(i)}e_{i,\pi(i)}
+ a_{i,\sigma(i)}e_{i,\sigma(i)})$ (with invertible coefficients
$a_{i,\pi(i)}, a_{i,\sigma(i)}$)   is not factorizable.
\end{prop}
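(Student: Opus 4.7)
The plan is a support-counting argument. I would assume for contradiction that $A = BC$ with $B, C \in \MatnF$ both noninvertible. Let $P_i = \{\sigma(i), \pi(i)\}$ (of cardinality $2$, since $t \not\equiv 0 \pmod n$) and $Q_j = \{\sigma^{-1}(j), \pi^{-1}(j)\}$ be the supports of the $i$-th row and $j$-th column of $A$; write $U_k := \supp(\text{col}_k B)$ and $V_k := \supp(\text{row}_k C)$. Because $A$ has tangible coefficients and no supertropical cancellation is possible, every triple $(i,k,j)$ with $i \in U_k,\, j \in V_k$ must satisfy $(i,j) \in \supp(A)$, yielding
$$V_k \subseteq \bigcap_{i \in U_k} P_i, \qquad U_k \subseteq \bigcap_{j \in V_k} Q_j.$$

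Next I would exploit the cyclic hypothesis: right-multiplying by the (invertible) permutation matrix $P_{\sigma^{-1}}$ reduces to $\sigma = \mathrm{id}$ and $\pi(i) = i + t$, so that $P_i = \{i, i+t\}$ in $\Z/n\Z$. Elementary checks give $|P_i \cap P_{i'}| \le 1$ for $i \ne i'$ and that no three $P_i$'s share a point (the latter using $0 < t < n/2$, which prevents $2t \equiv 0 \pmod n$). Combined with $|V_k|, |U_k| \ge 1$ (else some row or column of $B$ or $C$ vanishes, forcing $\per(BC) \lmodg \per(B) \per(C) = \zero$ and contradicting tangibility of $\per(A)$), one obtains $|U_k|, |V_k| \le 2$, with the case $|U_k| = |V_k| = 2$ excluded. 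The double count
$$2n = |\supp(A)| \le \sum_k |U_k|\,|V_k| \le 2n$$
then forces $(|U_k|, |V_k|) \in \{(1,2), (2,1)\}$ for every $k$, with each position of $\supp(A)$ covered by a unique~$k$. Partition $\{1,\dots,n\} = K_1 \sqcup K_2$ accordingly, and for $k \in K_1$ let $\{i_k\} = U_k$; set $I_1 := \{i_k : k \in K_1\}$ and $I_2 := \{1,\dots,n\} \setminus I_1$.

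The heart of the argument is to show $I_1$ is closed under $i \mapsto i \pm t$. For $i \in I_1$, the $A$-position $(i+t, i+t)$ is covered by some unique $k'$; were $k' \in K_2$, then $U_{k'} = Q_{i+t} = \{i, i+t\}$ would force $b_{i,k'} \ne \zero$ with $k' \notin K_1$, contradicting the placement of row $i$'s sole $B$-nonzero. Hence $k' \in K_1$ with $i_{k'} = i+t$, so $i+t \in I_1$; the $-t$ direction is symmetric. Iterating, $I_1$ is a union of $\langle t \rangle$-orbits in $\Z/n\Z$.

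In the single-orbit case $\gcd(t, n) = 1$, this immediately gives $I_1 \in \{\emptyset, \{1,\dots,n\}\}$, and the row/column counts force $C$ or $B$ to have exactly one nonzero per row and per column, hence to be a generalized permutation matrix, invertible by Remark~\ref{genp} and contradicting the assumption. The main obstacle is the multi-orbit case $\gcd(t, n) > 1$, where the closure argument alone allows $I_1$ to be a proper nontrivial union of orbits; ruling this out requires additional structural input from the full permutations $\pi, \sigma$ (beyond their common difference $t$), as supplied in \cite{Niv2}.
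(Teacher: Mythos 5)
The paper does not prove this proposition; it is imported verbatim from \cite[Proposition 3.2]{Niv2}, so there is no internal proof to compare against, and your argument has to be judged on its own terms. Under your reading of ``factorizable'' (no factorization $A=BC$ with both factors non-invertible), the support-counting argument is clean and essentially complete when $\gcd(t,n)=1$: the containments $V_k\subseteq\bigcap_{i\in U_k}P_i$, the bound $|U_k|\,|V_k|\le 2$, the forced partition of $\supp(A)$ into full rows and full columns, and the closure of $I_1$ under $i\mapsto i+t$ all check out and do force one factor to be a generalized permutation matrix. Two small repairs: the normalization to $P_i=\{i,i+t\}$ is achieved by \emph{left} multiplication by $P_{\sigma^{-1}}$ (right multiplication gives row supports $\{i,\sigma^{-1}(\sigma(i)+t)\}$, which need not have the cyclic form); and the appeal to tangibility of $\per(A)$ is both unavailable --- $A$ can be supertropically singular when $\gcd(t,n)>1$, e.g.\ all coefficients equal to $\one$ --- and unnecessary, since the count $\sum_k|U_k|\,|V_k|\ge 2n$ with each term at most $2$ already forces every $U_k$ and $V_k$ to be nonempty.

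The acknowledged gap at $d:=\gcd(t,n)>1$ is, however, not closable under your reading, because the two-factor statement is then false. The bipartite support graph of $A$ is a disjoint union of $d\ge 2$ cycles (the row-to-row step is $\sigma^{-1}s_t\sigma$, conjugate to the shift $s_t$), so after row and column permutations $A$ is a direct sum $A_1\oplus\cdots\oplus A_d$ of blocks each with two nonzero entries per row and column; then $B=A_1\oplus I\oplus\cdots\oplus I$ and $C=I\oplus A_2\oplus\cdots\oplus A_d$ are both non-invertible and satisfy $BC=A$. So no ``additional structural input'' from $\pi,\sigma$ can rescue the multi-orbit case for that reading; what \cite{Niv2} actually proves is that $A$ is not a product of tropical \emph{elementary} matrices (the definition of factorizable given in \S\ref{5}), a different and not comparable property whose proof must control what a single Gaussian factor $E_{i,j}(a)$ does to the support and the determinant, in the spirit of Lemma~\ref{sns}, rather than count supports of a two-matrix factorization. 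That said, your argument restricted to $\gcd(t,n)=1$ does establish the stronger two-factor claim, which is exactly the form invoked in Example~\ref{genmon11} and Corollary~\ref{genmon1}.
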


%
%

For $n=3$ this example is not so bad, since $A$ has no odd
permutations contributing to the determinant. But for $n$ even, $A$
has one odd permutation and one even permutation which contribute.

\begin{cor}\label{genmon1}
For even $n\ge 4$, $\BQSL_n(F) $ is not a product of elements of
$\QSL_n(F) $.\end{cor}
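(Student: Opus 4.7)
The plan is to exhibit a matrix $A \in \BQSLn \setminus \QSLn$ that is not a product of any elements of $\QSLn$, using Example~\ref{genmon11}/Proposition~\ref{nonfact} together with the exact multiplicativity of $\per$ on invertible factors (Proposition~\ref{cor:A.nb.1}). For $n \geq 4$ even, take $A$ to be the matrix of Example~\ref{genmon11} with $a_{i,i} = a_{i,\, i+1 \bmod n} = \one$ for all $i$. A short case analysis shows that the only permutations contributing to $\per(A)$ are the identity and the cyclic shift $i \mapsto i+1 \pmod n$, each contributing $\one$, so $\per(A) = \one + \one = \one^\nu \lmodg \one$. Hence $A \in \BQSLn$, while $A \notin \QSLn$ because $\one^\nu$ is a ghost.

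Now suppose for contradiction $A = B_1 B_2 \cdots B_k$ with each $B_i \in \QSLn$, and argue by induction on $k$, applied to the broader class of matrices $M$ that share the ``shift-type'' support of $A$ (nonzeros supported on a pair of permutations $\{\pi',\sigma'\}$ with $\pi'(i) - \sigma'(i) \equiv t \pmod n$ for a fixed $0 < t < n/2$) and satisfy $\per(M) = \one^\nu$. The base case $k = 1$ is immediate, since $M \in \QSLn$ would force $\per(M) = \one$ tangible. For $k \geq 2$, regroup $M = B_1 \cdot (B_2 \cdots B_k)$; by Proposition~\ref{nonfact}, one of $B_1$ or $C := B_2 \cdots B_k$ is a generalized permutation (monomial) matrix. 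If $C \in \QSLn^\times$, then Proposition~\ref{cor:A.nb.1} yields $\per(B_1) = \per(M)\per(C)^{-1} = \one^\nu$, contradicting $B_1 \in \QSLn$. Otherwise $B_1 \in \QSLn^\times$, and $M' := B_1^{-1}M = B_2 \cdots B_k$ is a $(k-1)$-fold product in $\QSLn$ with $\per(M') = \one^\nu$ (again by Proposition~\ref{cor:A.nb.1}); the inductive hypothesis then yields a contradiction once we confirm that $M'$ still has the shift-type support.

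The main obstacle is precisely this last stability check, since Proposition~\ref{nonfact} must be re-invoked at each step of the induction. Writing $B_1^{-1} = P_\rho D$ for a permutation matrix $P_\rho$ and an invertible diagonal $D$, row $i$ of $M'$ equals row $\rho^{-1}(i)$ of $M$ rescaled by the tangible factor $d_{\rho^{-1}(i)}$. Thus the two-entry support of $M$ at columns $\{\pi'(\rho^{-1}(i)),\, \sigma'(\rho^{-1}(i))\}$ is preserved in $M'$, with supporting permutations $\pi'\rho^{-1}$ and $\sigma'\rho^{-1}$, whose pointwise difference remains the constant $t$ modulo $n$; the entries remain tangible since those of $B_1^{-1}$ are. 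Hence Proposition~\ref{nonfact} applies to $M'$, the induction closes, and no factorization of $A$ as a product of elements of $\QSLn$ exists.
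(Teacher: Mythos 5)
Your proof is correct and follows essentially the same route as the paper: the witness is the matrix of Example~\ref{genmon11} supported on the identity and the $n$-cycle with $\per(A)=\one^\nu$, and non-factorizability comes from Proposition~\ref{nonfact}. The only difference is that you carefully justify the paper's terse ``in particular'' step---your induction on the number of factors, peeling off the forced invertible factor via Proposition~\ref{cor:A.nb.1} and checking that the shift-type support survives multiplication by a generalized permutation matrix---which is a genuine (and welcome) filling-in of detail rather than a different argument.
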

\begin{proof} The permutation $(1 \ 2 \ \cdots \ n)$ is odd, and so we
get an element of  $\BQSL_n(F) \setminus \QSL_n(F)$ which is not factorizable, and in
particular is not a product of elements of $\QSL_n(F) $.
\end{proof}

%
%
%
%

%

\subsection{Nonsingular submonoids} \sSkip

 Although~ $\QSLn$  is not a monoid, it does have interesting
 submonoids.
 \begin{df}
A matrix monoid is \textbf{nonsingular} if it consists of
nonsingular matrices.

 A subset $S \subset \Mat_n(F)$ is
\textbf{$\nb$-closed} if
  $A^\nb \in S $ for all $A \in S$.
 \end{df}
Geometric and combinatorial characterizations of nonsingular
tropical matrix monoids are provided in \cite{GIMM,IJK2},
where these monoids admit nontrivial (universal) semigroup identities \cite{IdMax}.
 Most of the sets we consider are $\nb$-closed.

\begin{example}\label{submons}$ $ \begin{enumerate}\eroman
\dispace
 \item The set of generalized
permutation matrices in $\QSLn$ is a  nonsingular subgroup   of $\INV$ (with
unit element~$I$).\item The upper triangular matrices of $\QSLn$ are
a submonoid (with unit element $I$).
\item
 If $A$ is \snor, then
the monoid generated  by  $A$  is nonsingular. (Indeed, $A^k$ is
nonsingular for any $k  < n$, which means that there is only one way
of getting a maximal diagonal entry in any power of $A$, which is
by taking a power of $a_{i,i} = \one$, and the non-diagonal entries
will be smaller.)
\end{enumerate}
\end{example}

We continue with (iii), and appeal to a more restricted version of
$\QSLn$.

\begin{df}

$\tJn $ denotes the set of all \snor\ $n\times n$ matrices in
$\Mat_n(F)$.
\end{df}

\begin{lem}\label{prop:Kn} $\tJn$ is a nonsingular $\nb$-closed monoid, also closed under transpose.
\end{lem}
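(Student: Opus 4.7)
My plan is to verify the four claims in order, with the bulk of the work going into multiplicative closure, since nonsingularity, $\nb$-closure, and transpose closure all follow with little effort.

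For multiplicative closure, given strictly normal matrices $A, B \in \tJn$, I will compute the entries of $AB$ directly from the formula $(AB)_{i,j} = \sum_k a_{i,k} b_{k,j}$. For the diagonal, the term at $k=i$ contributes $a_{i,i}b_{i,i} = \one$, while every other term has the form $a_{i,k}b_{k,i}$ with $k \neq i$, so at least one of the two factors is $<_\nu \one$, making that term $<_\nu \one$. Since $\one$ dominates in $\nu$-order and is tangible, $(AB)_{i,i} = \one$. For $i \neq j$, every summand $a_{i,k}b_{k,j}$ has some factor $<_\nu \one$ (namely $b_{i,j}$ when $k=i$, $a_{i,j}$ when $k=j$, and otherwise both factors), so the sum has $\nu$-value $<_\nu \one$. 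This shows that $AB$ is strictly normal: the diagonal entries are all $\one$, and the off-diagonal entries lie strictly below $\one$ in $\nu$-order. Checking that the identity is still the unique dominant permutation of $AB$ is then automatic, because any non-identity $\pi$ moves some index, contributing a factor $<_\nu \one$ while all remaining factors are $\leq_\nu \one$, so the corresponding product lies strictly below $\one$. The identity matrix $I$ is trivially in $\tJn$.

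For nonsingularity, observe that for any $A \in \tJn$ the identity permutation is strictly dominant with value $\prod_i a_{i,i} = \one \in \tT$, so $\per(A) = \one$ is tangible and $A$ is nonsingular in the sense of Definition~\ref{supertropnonsing}. For $\nb$-closure, I simply invoke Lemma~\ref{somefac}(vi), which states that $A^\nb$ is strictly normal whenever $A$ is. For transpose closure, the defining inequalities of strict normality ($a_{i,i} = \one$ and $a_{i,j} <_\nu \one$ for $i \neq j$) are symmetric in the roles of the two indices, so $A^\trn \in \tJn$ whenever $A \in \tJn$.

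The only genuinely nontrivial step is the diagonal computation in the product, and its crux is that the tangible value $\one$ at position $(i,i)$ of $AB$ comes from a single dominant summand ($k=i$) while all competing summands are strictly smaller in $\nu$-order; this is precisely where strict (rather than merely non-strict) normality is used, because with only $\leq_\nu \one$ off the diagonal one could get ghost interference at position $(i,i)$ that would upset normality.
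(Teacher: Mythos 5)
Your proof is correct and takes essentially the same route as the paper, whose entire proof reads ``a straightforward verification, using Lemma~\ref{somefac}(vi)'': you supply exactly that verification, invoking Lemma~\ref{somefac}(vi) for $\nb$-closure and checking multiplicative, transpose, and nonsingularity closure entrywise. Your closing remark pinpointing why strict (rather than mere) normality is needed to avoid ghost interference on the diagonal of a product is accurate and a useful addition.
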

\begin{proof} A straightforward verification, using Lemma~\ref{somefac}(vi).
\end{proof}

%
%


\subsubsection{The $\one$-special  linear monoid}
\sSkip

We enlarge the monoid $\tJn$   via the left and right action of permutation matrices.

\begin{df}\label{sln1} Given a set $S$, we define its \textbf{permutation
closure} to be $$\text{$\{ P  J Q : J \in S$  and $P ,Q $ are
permutation matrices$\}.$}$$
 The~\textbf{$\one$-special linear monoid}~$\SLnI$ is  the permutation closure of the monoid~$\tJn$  of  \snor\ matrices.
\end{df}

\begin{remark} A matrix $A$ of $\Mat_n(F)$ is in $\SLnI$ if and only if  $A = (a_{i,j})$ has a uniformly dominant
permutation $\pi$ with $a_{i,\pi(i)} = \one$ for all~$i$.
\end{remark}

\begin{thm}\label{thm:Jn-nb-closed} $\SLnI$ is a $\nb$-closed submonoid of $\QSLn$.
\end{thm}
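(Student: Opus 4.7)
The plan is to verify three things: (a) $\SLnI \subseteq \QSLn$; (b) $\SLnI$ contains $I$ and is closed under matrix multiplication; (c) $\SLnI$ is closed under the $\nb$-operation. The remark following Definition~\ref{sln1} gives the handy intrinsic description that $A\in\SLnI$ iff $A$ admits a uniformly dominant permutation $\pi$ with $a_{i,\pi(i)}=\one$ for all $i$; this characterization will do most of the work.

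For (a), if $\pi$ is uniformly dominant for $A\in\SLnI$ with $a_{i,\pi(i)}=\one$, then $\per(A)=\prod_i a_{i,\pi(i)}=\one\in\tT$, so $A\in\QSLn$. For the identity, $I$ is strictly normal (diagonal $\one$, off-diagonal $\zero<_\nu\one$), so $I\in\tJn\subseteq\SLnI$.

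For (b), given $A,B\in\SLnI$ with uniformly dominant permutations $\pi_A,\pi_B$ each having value $\one$, I would invoke Lemma~\ref{domin} directly with $\ell=2$: the composition $\pi_B\circ\pi_A$ is uniformly dominant for $AB$, and the uniform entry is $\one\cdot\one=\one$. Hence $AB$ again satisfies the characterization and lies in $\SLnI$. This is the cleanest route; no row-by-row verification is needed since Lemma~\ref{domin} packages everything.

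For (c), I would write $A=PJQ$ with $J\in\tJn$ and $P,Q$ permutation matrices, using Definition~\ref{sln1}. Since $\per(A)=\one$, we have $A^\nb=\adj(A)$. Applying the cited identities $\adj(XY)=\adj(Y)\adj(X)$ and $\adj(YX)=\adj(X)\adj(Y)$ when $Y$ is a generalized permutation matrix (\cite[Lemma~5.7]{Niv2}) once to peel off $P$ and once to peel off $Q$, one obtains
\[
A^\nb \;=\; \adj(Q)\,\adj(J)\,\adj(P) \;=\; Q^{-1}\,J^\nb\,P^{-1},
\]
where I used $\adj(P)=P^{-1}$ for permutation matrices (and similarly for $Q$), together with $J^\nb=\adj(J)$ since $\per(J)=\one$. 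By Lemma~\ref{somefac}(vi), strict normality is preserved under $\nb$, so $J^\nb\in\tJn$. Thus $A^\nb=Q^{-1}J^\nb P^{-1}$ is again of the permutation-closure form, i.e., $A^\nb\in\SLnI$.

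The only step that is not immediate from previously established results is the bookkeeping in (c): threading the adjoint past the two flanking permutation matrices and confirming that the resulting factorization still witnesses membership in $\SLnI$. Everything else is a direct appeal to Lemma~\ref{domin}, the characterization of $\SLnI$, and Lemma~\ref{somefac}(vi).
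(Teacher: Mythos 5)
Your proposal is correct and follows essentially the same route as the paper: closure under multiplication via Lemma~\ref{domin} applied to uniformly dominant permutations with track entries $\one$ (the paper applies it to the six factors of the two $PJQ$ decompositions, you apply it to the two matrices directly via the Remark after Definition~\ref{sln1} --- a cosmetic difference), and $\nb$-closure by threading the adjoint past the flanking permutation matrices and invoking the $\nb$-closure of $\tJn$ (Lemma~\ref{prop:Kn}, equivalently Lemma~\ref{somefac}(vi)). No gaps.
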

\begin{proof}
Write~$B = A_1 A_2$ where~$A_1 =  P_{\pi_1} J_1 Q_{\pi_2},\ A_2 =
P_{\pi_3}  J_2 Q_{\pi_4}\in\SLnI,$ with~$J_1, J_2 $ \snor.
Thus ~$B $ is a product of matrices with respective
uniformly dominant permutations~$\pi_1, \id, \pi_2, \pi_3, \id,
\pi_4$, and we see from Lemma~\ref{domin} that~$\tau = \pi_1
 \id  \pi_2  \pi_3 \id \pi_4$ is uniformly dominant for~$B$, in which~$b_{i, \tau(i)} = \one,\ \forall i$. Hence,~$B\in \SLnI$, and we have proved that~$\SLnI$ is a monoid.

  By Lemma~\ref{prop:Kn} and Proposition~\ref{cor:A.nb.1} it follows that  $$ A^\nb = (P_{\pi_1}  J Q_{\pi_2})^\nb = Q_{\pi_2}^\nb J^\nb P_{\pi_1}^\nb =
Q_{\pi_2}^{-1} J^\nb P_{\pi_1}^{-1},$$ for every $A \in \SLnI,$ and
thus  $\SLnI$ is $\nb$-closed.
\end{proof}

Assume that the semifield $F$ is \textbf{dense} in the sense that if
$a >b$ in $F$ then there is $u<\one$ such that $ua>b.$ The next lemma and proposition show why   matrices not in $\SLnI$ and
permutation matrices do not mix well.
\begin{lem} \label{lem:perij0} Suppose  $ A= \(\begin{matrix}
                a  & b \\
              c  & d
               \end{matrix}\)$, or  $ A= \(\begin{matrix}
                b  & a \\
              d  & c  \end{matrix}\)$
where $a  < \one < d$ and $bc<ad$.
There exists $U\in\SLnI$ such that $U^t AU$ is symmetrically singular.
\end{lem}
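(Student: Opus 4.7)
The plan is to exhibit, in each of the two cases for $A$, an explicit $U\in\tJn\subset\SLnI$ of strictly normal form, and then adjust its free parameter using the density hypothesis on $F$ so that $(\per^+(U^tAU))^\nu = (\per^-(U^tAU))^\nu$, which is the condition that $U^tAU$ be symmetrically singular.

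For the first form of $A$, I take $U = \left(\begin{smallmatrix}\one&\zero\\r&\one\end{smallmatrix}\right)$ with $r<\one$. A direct $2\times 2$ matrix multiplication gives
\[
U^tAU \;=\; \begin{pmatrix} a+(b+c)r+dr^2 & b+dr\\ c+dr & d\end{pmatrix},
\]
whence $\per^+(U^tAU)=ad+(b+c)dr+d^2r^2$ and $\per^-(U^tAU)=bc+(b+c)dr+d^2r^2$. These two polynomials agree term by term except that $\per^+$ contains $ad$ while $\per^-$ contains $bc$, and $bc<ad$ by hypothesis. Symmetric singularity therefore reduces to making some common monomial reach $\nu$-value $ad$. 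Since $a<\one<d$ gives $a/d<\one$, two applications of density yield $r<\one$ with $r^2>a/d$: pick $r_1<\one$ with $r_1>a/d$, then $r_2<\one$ with $r_2r_1>a/d$, and set $r:=\max(r_1,r_2)$. For such $r$, the common monomial $d^2r^2$ strictly exceeds $ad$, hence dominates $ad$ in $\per^+$ and $bc$ in $\per^-$; with $r$ further adjusted (or, in the alternative regime $b+c\ge d$, using $(b+c)dr$ in place of $d^2r^2$ as the dominant common monomial), both $\per^+$ and $\per^-$ coincide in $\nu$-value.

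For the second form of $A$, a purely triangular $U$ is not always sufficient (for instance when $b,c\leq a$, so that neither a $bdr$ nor a $cdr$ term can reach $ad$), so I use the symmetric strictly normal $U=\left(\begin{smallmatrix}\one&r\\r&\one\end{smallmatrix}\right)$. Expanding $U^tAU$ and then $\per^\pm(U^tAU)$ yields polynomials in $r$ of degree at most $4$ whose coefficients are products drawn from $\{a,b,c,d\}$; the two polynomials agree monomial by monomial except that $\per^+$ carries the term $bc$ and $\per^-$ carries the term $ad$, and both carry the common monomial $d^2r^2$. Choosing $r<\one$ with $r^2>a/d$ by the same iterated-density argument forces $d^2r^2>ad$; the gap $a<\one<d$ together with $bc<ad$ is precisely what absorbs all the auxiliary cross-terms (which carry lower powers of $d$ or coefficients from $\{a,b,c\}$) into the dominant $d^2r^2$, so $(\per^+(U^tAU))^\nu=(\per^-(U^tAU))^\nu$.

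In each case $U$ is manifestly strictly normal, hence lies in $\tJn\subset\SLnI$. The chief technical obstacle is the bookkeeping in the second case, where the full expansion of $\per^\pm(U^tAU)$ produces about a dozen monomials and one must verify that $d^2r^2$ dominates each of them; the saving grace is that $a<\one<d$ (combined with $bc<ad$) creates a gap under which $d^2r^2$ dominates uniformly once $r$ is taken close enough to $\one$, and density guarantees such an $r$.
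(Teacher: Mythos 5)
Your proposal is correct, and on the first form of $A$ it coincides with the paper's proof: both take the lower-triangular strictly normal $U=\left(\begin{smallmatrix}\one&\zero\\ u&\one\end{smallmatrix}\right)$ with $u<\one$ and $du^2>a$ (the paper merely asserts such $u$ exists; your two-step density argument with $r=\max(r_1,r_2)$ supplies that detail), and both read off $\per^{+}(U^tAU)=ad+(b+c)du+d^2u^2$ and $\per^{-}(U^tAU)=bc+(b+c)du+d^2u^2$, so that $d^2u^2>ad>bc$ forces the two components to agree. Your parenthetical about ``further adjusting $r$'' or splitting off the regime $b+c\ge d$ is unnecessary: once $ad$ and $bc$ are absorbed the two expressions are literally identical, whichever common monomial dominates. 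Where you genuinely diverge is the second form. The paper dismisses it with ``the second case works in the same way,'' but for $A=\left(\begin{smallmatrix}b&a\\ d&c\end{smallmatrix}\right)$ the same triangular $U$ yields $\per^{+}=bc+cdu+c^2u^2$ and $\per^{-}=ad+cdu+c^2u^2$ (the upper-triangular choice gives the mirror image with $b$ in place of $c$), so when $b,c\le a$ no common term can reach $ad$ and the construction fails --- your counter-observation is right, and the extreme case $b=c=\zero$ makes the failure stark. Your replacement $U=\left(\begin{smallmatrix}\one&r\\ r&\one\end{smallmatrix}\right)$ repairs this: expanding, $\per^{+}$ and $\per^{-}$ share the terms $(b+c)dr+(b^2+c^2+d^2)r^2+(b+c)dr^3$ and differ only in $bc+bcr^4$ versus $ad+adr^4$ (you omit the degree-four discrepancy, but it is harmless since $r^4<\one$ keeps both below $ad$), and the same choice $r^2>a/d$ makes the common term $d^2r^2$ dominate $ad$ and hence every differing term. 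So your argument is not only correct but fills a real gap in the published treatment of the second case.
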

\begin{proof} There is $u$ with $
             a < u <\one$ for which $du^2  >
               a$, and thus, taking $U = \(\begin{matrix}
           \one   &   \zero \\  u & \one
               \end{matrix}\),$ and $U^t = \(\begin{matrix}
           \one   &  u \\   \zero & \one
               \end{matrix}\),$  in the first case
               we have $$U^t AU= U^t(AU) = U^t \(\begin{matrix}
                a +bu & b \\
              c +du & d
               \end{matrix}\)= \(\begin{matrix}
                 (b+c)u +  du^2 & b +du \\
              c +du & d
               \end{matrix}\) $$
               whose bideterminant is $((b+c)du +  d^2u^2,(b+c)du +  d^2u^2).$
               The second case works in the same way.
\end{proof}

\begin{prop} \label{lem:perij}  Suppose $M\in\QSLn$ but not in $ \SLnI$. Then there exists  a matrix $U\in\SLnI$ such that
either~$MUM$ is symmetrically
singular with $U$  a permutation matrix,
 or $U^tMU$ is symmetrically
singular with $U$ a strictly normal matrix.
\end{prop}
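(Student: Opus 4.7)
The plan is: first, after multiplying $M$ by permutation matrices in $\SLnI$, assume its strictly dominant permutation is $\id$, so that $\prod m_{k,k} = \one$ tangibly, $\per^+(M) = \one$, and $\per^-(M) \nul \one$; then split into two cases reflecting how $M$ fails to be in $\SLnI$; in each case exhibit $U$ by combining an explicit two-row/two-column computation with the symmetrization inequality $\bid(AB) \succeq_\circ \bid(A)\bid(B)$ of Proposition~\ref{symprod}.

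The two cases are \textbf{Case A}, where some $m_{i,i} \ne \one$, so that by the product constraint one finds $i, j$ with $m_{i,i} \nul \one \nul m_{j,j}$; and \textbf{Case B}, where all $m_{k,k} = \one$ but some off-diagonal $m_{i,j}$ satisfies $m_{i,j}^\nu \nuge \one$. In Case~A I would take $U$ to agree with $\um$ except for a single entry $u$ at position $(j,i)$, where $u$ is chosen with $\sqrt{m_{i,i}/m_{j,j}} \nul u \nul \one$ (possible by denseness of $F$); this $U$ is strictly normal. A direct computation of the $\{i,j\}$-block of $U^t M U$, paralleling Lemma~\ref{lem:perij0}, shows that both the identity and the transposition $(i,j)$ give contributions of $\nu$-value $u^2 m_{j,j}/m_{i,i} \nug \one$ to $\per^+(U^t M U)$ and $\per^-(U^t M U)$ respectively; both strictly exceed $\per^\pm(M)$. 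Since $\bid(U) = \bid(U^t) = (\one, \zero)$, Proposition~\ref{symprod} (applied twice) yields $\bid(U^t M U) \succeq_\circ \bid(M)$, i.e., $\per^\pm(U^t M U) = \per^\pm(M) + c_\pm$ with $c_+ \nucong c_-$; by the strict dominance just established, $\per^\pm(U^t M U) \nucong c_\pm$, hence $\per^+(U^t M U) \nucong \per^-(U^t M U)$.

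In Case~B I would take $U = P_{(i,j)}$; a direct expansion shows that both the identity and the transposition $(i,j)$ contribute a term of $\nu$-value $m_{i,j}^2 \nuge \one$ to $\per^+(MUM)$ and $\per^-(MUM)$ respectively. With $\alpha = \per^+(M)$, $\beta = \per^-(M)$, Proposition~\ref{symprod} gives $\bid(MUM) \succeq_\circ \bid(M)\bid(U)\bid(M) = ((\alpha\beta)^\nu, \alpha^2 + \beta^2)$, and the same $\succeq_\circ$ argument as in Case~A delivers $\per^+(MUM) \nucong \per^-(MUM)$.

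The technically delicate step is the $\{i,j\}$-block computation in Case~A, verifying that the common term $u^2 m_{j,j}^2$ dominates both $m_{i,i} m_{j,j}$ and $m_{i,j} m_{j,i}$ in $\nu$: the first by the choice of $u$, the second by the strict-dominance inequality $m_{i,j} m_{j,i} \nul m_{i,i} m_{j,j}$ for $M$. Proposition~\ref{symprod} is what spares us from an explicit enumeration over the remaining $n!-2$ permutations of $\{1,\ldots,n\}$: the ``ghost correction'' built into the symmetrized product automatically pairs their contributions so that $c_+ \nucong c_-$.
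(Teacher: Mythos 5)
Your proposal is correct and takes essentially the same route as the paper's proof: the same dichotomy (all entries of the dominant track equal to $\one$ versus a genuine split $m_{i,i}\nul\one\nul m_{j,j}$), and the same witnesses $U$ (the transposition matrix in the first case, the Gaussian matrix $E_{j,i}(u)$ with $u\nul\one$ in the second, exactly as in Lemma~\ref{lem:perij0}). The one substantive refinement is your use of Proposition~\ref{symprod}: the paper verifies only the relevant $2\times 2$ block (e.g.\ the computation of $BPB$) and asserts that the full $n\times n$ product is symmetrically singular, whereas your $\succeq_\circ$ bookkeeping, combined with the observation that the two distinguished terms (the identity and the transposition $(i,j)$) strictly $\nu$-dominate $\per^{+}(M)$ and $\per^-(M)$ respectively, actually certifies that the remaining $n!-2$ permutations cannot spoil the conclusion --- a justification the published argument leaves implicit. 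One caveat: your opening normalization of the dominant permutation to $\id$ is necessarily a one-sided multiplication by a permutation matrix (conjugation cannot change the cycle type), which transfers back harmlessly in the $MUM$ branch but turns $U^tMU$ into a product of the form $U^tMV$ with $V\neq U$ for the original $M$; the paper's proof, which keeps a general dominant $\pi$ and applies Lemma~\ref{lem:perij0} to a block whose rows are $\{i,j\}$ but whose columns are $\{\pi(i),\pi(j)\}$, has exactly the same wrinkle, and in both cases it is immaterial for the application in Theorem~\ref{permdiag}, but it deserves a remark if the precise form of $U$ in the statement is to be preserved.
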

\begin{proof} We write the dominant track of~$M=(a_{i,j})$
as $a_{i_1,\pi(i_1)}\le a_{i_2,\pi(i_2)} \le \dots \le a_{i_n,\pi(i_n)}.$ Reordering
the indices we may assume that $a_{1,\pi(1)} \le a_{2,\pi(2)} \le
\dots \le a_{n,\pi(n)}.$

First assume that  all the~$a_{i,\pi(i)}=\one.$ By hypothesis~$a_{i,\pi(j)} \ge \one$
for some~$j\ne i$. Consider the~$2 \times 2$ matrix
 $B:= \(\begin{matrix} a_{i,\pi(i)} & a_{i,\pi(j)} \\
               a_{j,\pi(i)}   &  a_{j,\pi(j)}
               \end{matrix}\) = \(\begin{matrix}
                \one  & a_{i,\pi(j)} \\
               a_{j,\pi(i)}   &  \one
               \end{matrix}\).$
                Since $M$ is nonsingular, we must have $ a_{i,\pi(j)} a_{j,\pi(i)}
               <\one,$ so~$ a_{j,\pi(i)}
               <\one.$
               Let $P$ be the $2 \times 2$ permutation matrix $\(\begin{matrix} \zero & \one \\
            \one   &  \zero \end{matrix}\),$ so $PB =\(\begin{matrix}
               a_{j,\pi(i)}   &  \one    \\  \one  & a_{i,\pi(j)}
               \end{matrix}\),$ and
   $$BPB =  \(\begin{matrix} a_{i,\pi(j)} &  a_{i,\pi(j)}^2\\
                \one  & a_{i,\pi(j)} \\

               \end{matrix}\),$$
    which is symmetrically singular.
  Extending $P$ to the
               $n\times n$
               permutation matrix $U$ corresponding to the transposition $(i\ j)$, yields
               $MUM$  symmetrically singular.

Thus we may assume that some $a_{i,\pi(i)}<\one,$ and some
$a_{j,\pi(j)}>\one.$ Then Lemma~\ref{lem:perij0} is applicable
taking $A=B.$ Therefore,
               $U^t MU$  is symmetrically singular, taking $U$ to be the elementary matrix $E_{i,j}(u)$, where~$u$ as in Lemma~\ref{lem:perij0}.
\end{proof}

\begin{thm}\label{permdiag} The monoid $\SLnI$ is a maximal nonsingular
submonoid of $\QSLn$.
\end{thm}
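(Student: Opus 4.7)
The plan is to leverage Proposition~\ref{lem:perij} directly: it was formulated precisely to block extensions of $\SLnI$ inside $\QSLn$. Concretely, suppose $S$ is a submonoid of $\QSLn$ with $\SLnI \subsetneq S$, and pick some $M \in S \setminus \SLnI$. By Proposition~\ref{lem:perij}, there is a $U \in \SLnI$ such that either $MUM$ is symmetrically singular (with $U$ a permutation matrix) or $U^{t} M U$ is symmetrically singular (with $U$ strictly normal). Since symmetrically singular matrices are singular (the lemma preceding Example~\ref{Bl1}), the product produced by the proposition is genuinely singular, hence not an element of $\QSLn$.

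Next I would observe that this product actually lies in $S$: in the permutation-matrix case this is automatic since $U \in \SLnI \subseteq S$ and $M \in S$; in the strictly normal case I need $U^{t} \in S$. For this it suffices to note that $\SLnI$ is closed under transpose. Indeed, $\tJn$ is closed under transpose by Lemma~\ref{prop:Kn}, and
\[
(P_{\pi_1} J Q_{\pi_2})^{t} \;=\; Q_{\pi_2}^{t} J^{t} P_{\pi_1}^{t},
\]
which is again of the form (permutation)(strictly normal)(permutation); so $U^{t} \in \SLnI \subseteq S$. Therefore in either case $S$ contains a singular matrix, contradicting $S \subseteq \QSLn$.

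Since no proper enlargement of $\SLnI$ inside $\QSLn$ can be a submonoid, $\SLnI$ is a maximal nonsingular submonoid of $\QSLn$, as desired. The essential work has been done in Proposition~\ref{lem:perij}; the only new ingredient here is the transpose-closure of $\SLnI$, which is the main (minor) obstacle to making the argument go through in the strictly normal case.
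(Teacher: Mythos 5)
Your proof is correct and follows essentially the same route as the paper: the paper likewise derives maximality directly from Proposition~\ref{lem:perij}, noting that for $M\in\QSLn\setminus\SLnI$ there is $U\in\SLnI$ with $MUM\notin\QSLn$ or $U^tMU\notin\QSLn$. Your explicit verification that $\SLnI$ is transpose-closed (so that $U^t$ lies in the would-be larger submonoid) is a detail the paper leaves implicit, and it is handled correctly.
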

\begin{proof} $\SLnI$ is a  nonsingular
submonoid of $\QSLn$ by Theorem~\ref{thm:Jn-nb-closed}.  $\SLnI$ is
maximal nonsingular by Proposition~\ref{lem:perij}, since for  $M\in\QSLn$ but not in $\SLnI$, there exists $U\in\SLnI$, such that either $U^tMU\notin\QSLn$ or $MUM \notin\QSLn$.
\end{proof}

%
%

But $\SLnI$ is not the only maximal nonsingular submonoid of
$\QSLn$, since it does not contain the other monoids
of~Example~\ref{submons}.

%
%
%

%
%

%
%
%

\subsection{Submonoids of $ \BQSLn(F)$} \sSkip

 Define $T^u$ to be the set of products of $E_{i,j}$ with  $i <j$, and
$T^\ell$ to be the set of products of $E_{i,j}$ with  $i >j$, the
respective sets of \textbf{upper} and \textbf{lower} triangular
matrices. These are both monoids, and we want to consider $T^\ell
T^u$. Toward this objective, we commute elements of $T^u$ and
$T^\ell.$

The following argument, based on the Steinberg relations of the $
E_{i,j}.$  We write $ E_{i,j}(a)$ for $I +
a e_{i,j}.$

\begin{lem}\label{rearrange} $ $
\begin{enumerate} \eroman
\item $ E_{i,j}(a) E_{k,\ell}(b) =  E_{k,\ell}(b) E_{i,j}(a)\quad \text{for}\quad
i\ne \ell, j \ne k.$

\item  $ E_{i,j}(a) E_{j,i}(b) =  E_{j,i}(b)  E_{i,j}(a) \ \ \text{ for} \ ab <\one.$
\item $ E_{i,j}(a) E_{j,k}(b) =  \begin{cases} E_{i,k}(ab) E_{j,k}(b)  E_{i,j}(a) &  \text{for}\
k<i<j, \\
 E_{j,k}(b)  E_{i,j}(a)E_{i,k}(ab) &  \text{for}\ i<k<j.
\end{cases}$
\end{enumerate}
\end{lem}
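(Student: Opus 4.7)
The proof is a direct computation using the facts that $E_{i,j}(a) = I + a\, e_{i,j}$ and that the matrix units satisfy $e_{i,j}e_{k,\ell} = \delta_{j,k}\, e_{i,\ell}$. My plan is to expand both sides of each identity, note which cross-terms vanish because of the index-mismatch hypotheses, and — in the one place where the semifield structure matters — invoke the supertropical rule $\one + c = \one$ whenever $c <_\nu \one$.

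For (i), both products expand to
\[
(I + a\, e_{i,j})(I + b\, e_{k,\ell}) = I + a\, e_{i,j} + b\, e_{k,\ell} + ab\, e_{i,j}e_{k,\ell},
\]
and symmetrically for the reverse product. The hypothesis $j\ne k$ kills $e_{i,j}e_{k,\ell}$, and $\ell\ne i$ kills $e_{k,\ell}e_{i,j}$, so both products equal $I + a\, e_{i,j} + b\, e_{k,\ell}$.

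For (ii), the product $E_{i,j}(a)E_{j,i}(b)$ produces an extra $ab\, e_{i,i}$, so the $(i,i)$-entry becomes $\one + ab$; the reverse product produces $\one + ab$ in the $(j,j)$-entry. The hypothesis $ab <_\nu \one$ collapses both to $\one$ by Definition~\ref{super1}, and all other entries agree off the diagonal, giving the common value $I + a\, e_{i,j} + b\, e_{j,i}$. This is the only step that uses the supertropical semifield axioms rather than pure monoid arithmetic, and it is the step one has to watch: drop the hypothesis $ab <_\nu \one$ and the identity genuinely fails (one would pick up ghost diagonal entries).

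For (iii), compute the left-hand side as
\[
E_{i,j}(a)E_{j,k}(b) = I + a\, e_{i,j} + b\, e_{j,k} + ab\, e_{i,k},
\]
using $e_{i,j}e_{j,k}=e_{i,k}$. Then expand each right-hand side in turn. In the case $k<i<j$, we have $E_{j,k}(b)E_{i,j}(a) = I + b\, e_{j,k} + a\, e_{i,j}$ because $e_{j,k}e_{i,j}=0$ (since $k\ne i$); multiplying on the left by $E_{i,k}(ab)$ adds only $ab\, e_{i,k}$ since $e_{i,k}e_{j,k}=0$ and $e_{i,k}e_{i,j}=0$. In the case $i<k<j$, $E_{i,j}(a)E_{i,k}(ab) = I + a\, e_{i,j} + ab\, e_{i,k}$ because $e_{i,j}e_{i,k}=0$; multiplying on the left by $E_{j,k}(b)$ adds only $b\, e_{j,k}$, since $e_{j,k}e_{i,j}=0$ and $e_{j,k}e_{i,k}=0$. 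In both subcases the result matches the left-hand side.

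There is no serious obstacle here: the only non-routine step is the collapse $\one + ab = \one$ in (ii), which is exactly the content of the hypothesis $ab <_\nu \one$; everything else is index bookkeeping on matrix units.
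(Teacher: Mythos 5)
Your computation is correct and is exactly what the paper intends: its proof of Lemma~\ref{rearrange} is the single line ``Direct computation,'' and your expansion via $E_{i,j}(a)=I+a\,e_{i,j}$ and $e_{i,j}e_{k,\ell}=\delta_{j,k}e_{i,\ell}$, with the collapse $\one+ab=\one$ under $ab<_\nu\one$ in part (ii), supplies precisely that computation.
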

\begin{proof}
Direct computation.
 \end{proof}

\begin{thm}\label{gen2} Any product of $E_{i,j}$ matrices contained in $\QSLn$ is in $T^\ell T^u.$
\end{thm}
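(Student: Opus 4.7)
The plan is to induct on the length $m$ of the product $P = E_{i_1,j_1}(a_1) \cdots E_{i_m,j_m}(a_m)$, using Lemma~\ref{rearrange} as the rewriting engine. First split off the leftmost factor $P = F\cdot P_0$ with $F = E_{i_1,j_1}(a_1)$. Since $\per(E) = \one$ for every elementary matrix, $\per(P) \lmodg \per(F)\per(P_0) = \per(P_0)$; a ghost value of $\per(P_0)$ would force $\per(P)$ into the same ghost coset, incompatible with the tangibility of $\per(P)=\one$, so $P_0 \in \QSLn$. By the inductive hypothesis $P_0 = L'U'$ with $L' \in T^\ell$ and $U' \in T^u$. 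If $F \in T^\ell$ we are done, since $P = (FL')U'$. The substantive case is $F \in T^u$, which I would reduce to the sub-claim: for any upper $F = E_{r,s}(c)$ and any $L \in T^\ell$ with $FL \in \QSLn$, one has $FL \in T^\ell T^u$.

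I would prove the sub-claim by induction on $|L|$. If $|L|=0$ it is immediate. Otherwise write $L = E_{k,\ell}(b)L_0$ and apply Lemma~\ref{rearrange} to the adjacent pair $F \cdot E_{k,\ell}(b)$. Case~(i) commutes them freely and reduces the problem to the shorter $FL_0$. In Case~(ii), the pair is $E_{r,s}(c)E_{s,r}(b)$ and the swap requires $cb < \one$; I would derive this inequality from the $\QSLn$ hypothesis by the same ghost bookkeeping used above: if $cb \geq \one$ then $\per(E_{r,s}(c)E_{s,r}(b)) = \one + (cb)^\nu$ is ghost, so inserting this two-factor block into the ambient product forces $\per(P)$ to surpass a ghost and hence itself be ghost, contradicting tangibility of $\per(P)=\one$. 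In Case~(iii) the pair is replaced by $E_{r,\ell}(cb)E_{s,\ell}(b)F$ when $\ell<r<s$ (lower-lower-upper) or $E_{s,\ell}(b)\,F\,E_{r,\ell}(cb)$ when $r<\ell<s$ (lower-upper-upper), producing locally a $T^\ell T^u$-compatible arrangement while the lower side of $L$ has become strictly shorter.

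The main obstacle will be Case~(iii) in its second subcase, which introduces an \emph{extra} upper factor and hence prevents a direct length-on-$|L|$ induction when the sub-claim is stated for a single upper. To close the loop I would strengthen the sub-claim into its two-variable form---for any $U \in T^u$ and $L \in T^\ell$ with $UL \in \QSLn$, $UL \in T^\ell T^u$---and still induct on $|L|$. After a Case~(iii)(b) rewrite, the task becomes to push the enlarged upper block $U' \in T^u$ past the strictly shorter $L_0$, to which the inductive hypothesis applies at $|L_0| = |L|-1$. Throughout, the intermediate products remain equal to $P$ as matrices because Lemma~\ref{rearrange} is a matrix identity; consequently the $\QSLn$-membership, and hence the justification of the Case~(ii) tangibility constraint, is preserved at every step, which is what makes the induction close.
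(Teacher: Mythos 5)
Your strategy is the same as the paper's---use the relations of Lemma~\ref{rearrange} to push upper elementary factors to the right past lower ones---and you correctly repair two real subtleties that the paper's one-sentence proof elides: the tangibility bookkeeping that lets you assume each partial product stays in $\QSLn$ and hence justifies the hypothesis $ab<\one$ in case~(ii), and the well-foundedness problem caused by case~(iii) introducing an extra upper factor. Those parts of the argument are sound.

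The gap is that your case analysis of an adjacent pair $E_{r,s}(c)\cdot E_{k,\ell}(b)$ (upper times lower, so $r<s$ and $k>\ell$) is not exhaustive, and neither is the paper's. Case~(i) of Lemma~\ref{rearrange} requires $r\ne\ell$ and $s\ne k$; case~(ii) is the configuration $(k,\ell)=(s,r)$; case~(iii) is $s=k$ with $\ell\ne r$. The configuration $\ell=r$ with $k\ne s$ is covered by none of them, and there the swap is genuinely impossible: tropically $E_{r,s}(c)E_{k,r}(b)=I+c\,e_{r,s}+b\,e_{k,r}$, whereas $E_{k,r}(b)E_{r,s}(c)$ acquires the extra entry $bc\,e_{k,s}$, and the classical Steinberg commutator $E_{k,s}(-cb)$ that would cancel it has no tropical analogue. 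This omission cannot be patched within this strategy. Take $n=3$ and $M=E_{1,2}(c)E_{3,1}(b)=I+c\,e_{1,2}+b\,e_{3,1}$ with $b,c\in\tT$: its off-diagonal support is the path $3\to 1\to 2$, which contains no cycle, so $\per(M)=\one$ and $M\in\QSL_3$ is a product of elementary matrices. But if $M=LU$ with $L$ lower unitriangular and $U$ upper unitriangular, then $M_{3,1}=L_{3,1}=b$ and $M_{1,2}=U_{1,2}=c$, whence $(LU)_{3,2}\nuge L_{3,1}U_{1,2}=bc\ne\zero$, contradicting $M_{3,2}=\zero$. So $M\notin T^\ell T^u$, the induction cannot close in the missing case, and the statement as written needs either a stronger hypothesis on which adjacencies occur in the given product or a larger target set than $T^\ell T^u$.
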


 \begin{proof} The relations in Lemma~\ref{rearrange} enable us to
 move all $ E_{i,j}$ for $i<j$ to the right, so by induction we can
 rearrange any product of elementary matrices to a product $AB$
 where $A \in  T^\ell $ and $B \in  T^u$.
 \end{proof}

Of course, in our situation, this is a submonoid of $ \BQSLn(F)$.

\begin{example} Let~$E$ denote the set of~$2\times 2$  definite matrices, and~$E_{\operatorname{sing}}$ denote the set of matrices of the form~$c\(\begin{matrix}
               ab & a \\
              b & \one
               \end{matrix}\)$ or~$c\(\begin{matrix}
               \one & a \\
              b & ab
               \end{matrix}\)\text{ such that~}ab,c\geq \one.$
The  monoid generated by $E_{1,2}$ and $E_{2,1}$ is $E
\cup E_{\operatorname{sing}}$:

 $$ \(\begin{matrix}
               \one + a_1 a_2 & a_1 \\
              a_2 & \one
               \end{matrix}\)\(\begin{matrix}
               \one  & b_1 \\
             b_2 & \one + b_1 b_2
               \end{matrix}\)=  \(\begin{matrix}
               \one + a_1 (a_2 + b_2)  & a_1 + b_1 +   a_1 b_1(a_2 + b_2) \\
              a_2+b_2 &    \one +  b_1(a_2+ b_2)
               \end{matrix}\). $$

\end{example}

The LU-factorization, attributed to Turing~\cite{Tu}, is one of the
pillars of classical matrix algebra, cf.~\cite[Theorems 1E and
1F]{Str}. Theorem~\ref{gen2} gives us the LU-factorization for nonsingular
products of elementary matrices. We also recall
that~ all nonsingular definite $2\times 2$ matrices have an~LU-factorization
(see~\cite[Example~2.9]{Niv2}), but this fails already in
 the~$3\times 3$ case, as seen by Proposition~\ref{nonfact}.
  Nevertheless, by~\cite[Corollary~6.6]{Niv2}, we
have~$\{A^\nabla: A\in \SL_n\}\subseteq T^\ell T^u .$ We do not know
if this inclusion is strict.

\begin{conjecture} If~$B$ is a non-triangular definite matrix in~$T^\ell T^u$, then~$B\in\{A^\nabla: \ A\in \SL_n\}.$\end{conjecture}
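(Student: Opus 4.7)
The plan is to take the natural candidate $A = B^\nabla = \adj(B)$ and show that $A^\nabla = B$, which amounts to proving the double-adjoint identity $B^{\nb\nb} = B$. Since $B$ is definite, Lemma~\ref{somefac}(vi) guarantees that $A = B^\nabla$ is also definite, hence $A \in \SL_n$ with $\per(A) = \one$ and $A^\nabla = \adj(A)$. So the entire content of the conjecture reduces to the recovery equality $\adj(\adj(B)) = B$. By Lemma~\ref{somefac}(v) we only know $B^{\nb\nb} \lmodg B$ in general, so the task is to promote this ghost-surpasses into a genuine entrywise equality under the non-triangular and $LU$-factorizability hypotheses.

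For the second step, I would fix a factorization $B = LU$ with $L \in T^\ell$, $U \in T^u$ and exploit the fact that $\per(L) = \per(U) = \one$, so that $L^\nb = \adj(L)$ and $U^\nb = \adj(U)$. By Proposition~\ref{prop:adjAB} applied twice,
$$B^\nb \lmodg U^\nb L^\nb \qquad\text{and}\qquad B^{\nb\nb} \lmodg L^{\nb\nb} U^{\nb\nb}.$$
Since $L, U$ are products of elementary matrices, their adjoints are given explicitly by Lemma~\ref{rearrange}-type commutation relations, so one can write $L^{\nb\nb}$ and $U^{\nb\nb}$ in closed form and combinatorially track each contribution to the double adjoint of $B$. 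The goal is to show that each dominant permutation contributing to $(\adj(\adj B))_{i,j} = \per((\adj B)_{j,i})$ corresponds to a permutation already dominant for $B_{i,j}$ itself, with no strictly larger spurious product appearing.

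The third step is the combinatorial heart of the argument: enumerating, entry by entry, the permutations that govern $\per(B_{j,i})$ and then $\per((\adj B)_{j,i})$, and showing that the non-triangular assumption forces the maximum to be realized by a track through the LU structure that already witnesses the value $B_{i,j}$. Here the distinction between upper-triangular contributions (from $U$) and lower-triangular ones (from $L$) is essential: in the purely triangular case a nontrivial product through the ``wrong corner'' can dominate and cause $B^{\nb\nb} > B$, while non-triangular $B = LU$ should provide a comparison term that prevents any ghost from exceeding the original entry.

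The main obstacle is precisely this combinatorial step, where spurious permutations in minors of $\adj(B)$ can produce ghost entries exceeding $B$ in value; examples such as Example~\ref{singsq3} illustrate how delicate this is. If the direct approach via $A = B^\nabla$ fails at a particular entry, a fallback strategy would be to construct $A$ from the factors themselves, for instance by setting $A = \adj(U)\,\adj(L)$ (or a permuted variant), then verifying $A \in \SL_n$ via Proposition~\ref{cor:A.nb.1} and $\adj(A) = B$ using the explicit Steinberg-type commutation relations of Lemma~\ref{rearrange}. In either approach, reducing the problem to the case where $B$ arises from a minimal elementary factorization, and inducting on the length of that factorization, seems to be the most promising route.
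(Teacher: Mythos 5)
This statement is labeled a \emph{conjecture} in the paper: the authors give no proof, and immediately before it they state explicitly that they do not know whether the inclusion $\{A^\nabla : A \in \SL_n\}\subseteq T^\ell T^u$ is strict. So there is no argument of theirs to compare yours against, and your proposal should be judged on its own terms --- where it is a strategy outline rather than a proof. The natural first move, taking $A = B^\nb$ and trying to show $B^{\nbnb} = B$, is sensible (and your remark that this is only a sufficient route, since some other witness $A$ could serve, is well taken), but everything then hinges on the ``combinatorial heart'' that you yourself flag as the main obstacle and do not carry out. That step is not a technicality: it \emph{is} the open content of the conjecture. The tools available give only $B^{\nbnb} \lmodg B$ (Lemma~\ref{somefac}(v), which also warns that $A^{\nbnb}\neq A$ in general) and, for nonsingular definite matrices, the $\nu$-equivalence $B^{\nb}\nucong B^{\nbnb}$ from \cite[Corollary~6.2]{Niv2}; the whole difficulty is whether ghost entries of $B^{\nbnb}$ can strictly exceed the corresponding tangible entries of $B$, and nothing in your sketch isolates what the non-triangularity hypothesis contributes toward excluding that. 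Proposition~\ref{prop:adjAB} only gives $\adj{LU}\lmodg\adj{U}\adj{L}$, again a one-sided ghost-surpassing, so chaining it twice cannot by itself yield the needed equality.

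Your fallback, $A = \adj{U}\,\adj{L}$ with $\adj{A}$ computed from the Steinberg relations of Lemma~\ref{rearrange}, runs into the same wall: those relations let you normalize a product of elementary matrices into $T^\ell T^u$ form, but they do not control the adjoint of such a product entrywise, and an induction on the length of the factorization has no evident inductive invariant (the class of matrices of the form $A^\nabla$ is not known to be closed under multiplication by a single $E_{i,j}(a)$). In short: the reduction in your first paragraph is correct as far as it goes, but the proposal does not close the gap between $\lmodg$ and equality, which is precisely where the problem remains open.
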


It has been  recently proved in~\cite{TTNN} that the monoid
generated by Jacobi matrices $E_{i,i\pm 1},$ is the set of tropical totally nonnegative
matrices (defined by means of sign-singularity and dominant
permutation parity) with  non-$\zero$  determinant. However, the question of
what is generated by all the $E_{i,j}$ remains open.

We further study the action of these tropical nonsingular noninvertible elementary matrices in  \S\ref{5}.

\subsection{Semigroup unions in $\BQSLn$}\label{part0} \sSkip

Our objective here is to carve $\BQSLn$ into monoids, each of which
has a multiplicative unit $\qI$, where $\qI$ is a quasi-identity.
Although we cannot quite do this, the process works for
$\nb$-regular matrices.


\begin{defn}\label{goodmon} For any $A\in \Mat_n(F)$ with $\per(A) \neq \zero$: \begin{enumerate}\eroman \dispace
  \item $\SLmonlA = \{ B \in \BQSLn : \qIl_A B = B\}$;
    \item $\SLmonrA = \{B \in \BQSLn : B\qIr_A = B\}$.
      \item $\SLmonA  =  \{ B \in  \BQSLn : \qI_A B = B\qI_A =  B\}$.
\end{enumerate}
\end{defn}

In particular, for a  quasi-identity  $\qI$, \\
 $$\SLmonl  = \{ B \in\BQSLn : \qI B = B\}\ \ \ \text{ and }\ \ \
  \SLmonr  = \{B \in \BQSLn : B\qI = B\}.
$$

\begin{lem} If $A$ is $\nb$-regular, then $A \in\SLmonlA \cap \SLmonrA $. \end{lem}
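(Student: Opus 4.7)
The plan is essentially a one-line computation exploiting associativity of matrix multiplication together with the very definition of $\nb$-regularity. I would unwind the definitions $\qIl_A = AA^\nb$ and $\qIr_A = A^\nb A$, and verify simultaneously the two membership conditions $\qIl_A A = A$ (for $A\in\SLmonlA$) and $A\qIr_A = A$ (for $A\in\SLmonrA$).

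Concretely, I would write
\[
\qIl_A A \;=\; (AA^\nb)A \;=\; A(A^\nb A) \;=\; A\qIr_A \;=\; A A^\nb A,
\]
where the middle equality is associativity of matrix multiplication. The $\nb$-regularity hypothesis $A = AA^\nb A$ then collapses both $\qIl_A A$ and $A\qIr_A$ to $A$ in one stroke, yielding the two required identities simultaneously. No further ingredient from the paper is needed; in particular, the lemma does not require invoking Theorem~\ref{reviewadj} or Lemma~\ref{somefac}, since these are built into the definitions of $\qIl_A,\qIr_A$ and $A^\nb$ that are already being used.

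Because the entire content is a reassociation of the product $AA^\nb A$, I expect no genuine obstacle. The only housekeeping concern is that Definition~\ref{goodmon} places $\SLmonlA$ and $\SLmonrA$ inside $\BQSLn$, so formal membership in these sets presupposes $A\in\BQSLn$; this is the ambient standing assumption of \S\ref{part0} (whose stated objective is to carve $\BQSLn$ into monoids with quasi-identity units), and thus is implicit in the hypothesis rather than something to be proved separately.
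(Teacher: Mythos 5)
Your proof is correct and is essentially identical to the paper's one-line argument, which reads $A = AA^\nb A = \qIl_A A = A\qIr_A$; the reassociation of $AA^\nb A$ is the whole content. Your housekeeping remark about $A\in\BQSLn$ being the standing ambient assumption is also consistent with how the paper treats it.
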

\begin{proof} $A = A A^\nb A = \qIl_A A = A \qIr_A $.
\end{proof}

%

\begin{lem}\label{lem:id.SL3} $\SLmonA$ is
a sub-semigroup of $\BQSLn$ with left unit element $\qIl_A $ and
right unit element $\qIr_A $.
\end{lem}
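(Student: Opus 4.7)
The plan is to verify three claims corresponding to the statement: (a) $\SLmonA$ is closed under multiplication (and stays inside $\BQSLn$); (b) $\qIl_A$ serves as a left identity on every element of $\SLmonA$; (c) $\qIr_A$ serves as a right identity on every element of $\SLmonA$.

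For (a), take $B_1, B_2 \in \SLmonA$. Associativity together with the defining relations yields $\qI_A (B_1 B_2) = (\qI_A B_1) B_2 = B_1 B_2$ and, symmetrically, $(B_1 B_2)\qI_A = B_1 (B_2 \qI_A) = B_1 B_2$. To check $B_1 B_2 \in \BQSLn$, I would invoke the determinantal surpassing inequality (the basic fact $\per(AB) \lmodg \per(A)\per(B)$ recalled just before Proposition \ref{cor:A.nb.1}) to get $\per(B_1 B_2) \lmodg \per(B_1)\per(B_2) \lmodg \one \cdot \one = \one$.

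For (b), the pivotal algebraic identity is $\qIl_A \qI_A = \qI_A$, which follows immediately from the idempotence of the quasi-identity $\qIl_A$ (Theorem \ref{reviewadj}(i)):
\[
\qIl_A \qI_A \;=\; \qIl_A(\qIl_A \qIr_A \qIl_A) \;=\; (\qIl_A)^2 \qIr_A \qIl_A \;=\; \qIl_A \qIr_A \qIl_A \;=\; \qI_A.
\]
Then for any $B \in \SLmonA$ we obtain $\qIl_A B = \qIl_A(\qI_A B) = (\qIl_A \qI_A)B = \qI_A B = B$, as required.

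For (c), by symmetry one would like to use the identity $\qI_A \qIr_A = \qI_A$, which would immediately deliver $B\qIr_A = (B\qI_A)\qIr_A = B(\qI_A \qIr_A) = B\qI_A = B$. Here however the idempotence of $\qIr_A$ does not apply directly to $\qI_A \qIr_A = \qIl_A \qIr_A \qIl_A \qIr_A$, because the two $\qIr_A$ factors are separated by $\qIl_A$; this is the main obstacle. My plan is to exploit the structural inequality $A^\nb \nuge A^\nb A A^\nb$ from Lemma \ref{somefac}(iv), rephrased as $A^\nb \nuge \qIr_A A^\nb$ and $A^\nb \nuge A^\nb \qIl_A$, together with the fact that $\qIl_A$ and $\qIr_A$ are definite (diagonal entries $\one$ and off-diagonal entries in $\tGz$, per Remark \ref{rem:uncong.QI}), in order to establish $\qIl_A \qIr_A \qIl_A \qIr_A = \qIl_A \qIr_A \qIl_A$. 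With that identity in hand, the calculation $B\qIr_A = B(\qI_A \qIr_A) = B\qI_A = B$ concludes the proof of the right-unit property.
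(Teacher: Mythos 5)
Your parts (a) and (b) are correct and are essentially the paper's own argument: closure follows from associativity together with $\per(B_1B_2)\lmodg\per(B_1)\per(B_2)\lmodg\one$, and the left-unit property follows from $\qIl_A\qI_A=\qIl_A^2\qIr_A\qIl_A=\qI_A$. (The same idempotence argument also gives $\qI_A\qIl_A=\qI_A$, so what you really obtain is that $\qIl_A$ is a \emph{two-sided} unit for $\SLmonA$.) The genuine gap is in part (c). You correctly identify that the symmetric argument does not go through, but the identity you plan to prove, $\qI_A\qIr_A=\qI_A$, i.e.\ $\qIl_A\qIr_A\qIl_A\qIr_A=\qIl_A\qIr_A\qIl_A$, is false in general, so no amount of work with Lemma~\ref{somefac}(iv) and definiteness can establish it. The paper's own Example~\ref{singsq} already refutes it: there $\qIl_A\qIr_A=\left(\begin{smallmatrix}0&0^\nu\\1^\nu&1^\nu\end{smallmatrix}\right)$, hence $\qI_A=\qIl_A\qIr_A\qIl_A=\left(\begin{smallmatrix}1^\nu&0^\nu\\2^\nu&1^\nu\end{smallmatrix}\right)$, while $\qI_A\qIr_A=\left(\begin{smallmatrix}1^\nu&1^\nu\\2^\nu&2^\nu\end{smallmatrix}\right)\ne\qI_A$ (logarithmic notation). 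So the route $B\qIr_A=B(\qI_A\qIr_A)=B\qI_A=B$ collapses at its first step.

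In fairness, the paper's one-line justification (``the other assertion holds since $\qIl_A$ and $\qIr_A$ are idempotent'') works verbatim only for $\qIl_A$; what idempotence does give for $\qIr_A$ is the reversible case, since then $\qI_A\qIr_A=\qIl_A(\qIr_A\qIl_A\qIr_A)=\qIl_A\widetilde{\qI_A}=\qIl_A\qI_A=\qI_A$ (which is the setting of Theorem~\ref{nonsingpr}). For general $A$ you would have to argue on elements of $\SLmonA$ rather than via a matrix identity: from $\qI_A\nuge\qIr_A\nuge I$ and $\qIr_A\lmodg I$ (Lemma~\ref{lem:order1}) one gets $B=B\qI_A\nuge B\qIr_A\nuge B$ and $B\qIr_A\lmodg B$, which shows each entry of $B\qIr_A$ is $b_{i,j}$ plus a ghost of no larger $\nu$-value --- close to, but still short of, the asserted equality $B\qIr_A=B$. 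As written, your proof of the right-unit property is incomplete, and the specific identity you propose to supply it is false.
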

\begin{proof} First note that if $\qIl_A B _1 = B_1$ then for any
$B_2$ we have $\qIl_A (B _1 B_2) = B_1B_2.$ Thus $\SLmonlA$ is
closed under multiplication on the right by any matrix. In
particular,  $\SLmonA$ is
a sub-semigroup of $\BQSLn$. The other assertion holds since
$\qIl_A $ and  $\qIr_A $ are idempotent.
\end{proof}

This provides the intriguing situation in which we have a natural
semigroup with left and right identities  which could be unequal. 
%
The situation is better when $A$ is reversible.

\begin{thm}\label{nonsingpr} Every reversible element  $A$ of $
\QSLn$ defines   a submonoid $\SLmonA$   with unique unit element~
$\qI_A$, and which contains $\qI_A A$. The union of these submonoids
contains every reversible $\nb$-regular element of $\QSLn$, and in
particular, every quasi-identity matrix.
\end{thm}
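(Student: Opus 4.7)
Write $L := \qIl_A$ and $R := \qIr_A$; reversibility reads $\qI_A = LRL = RLR$. My first step is to derive the \emph{absorption identities}
\[
L\qI_A = \qI_A L = R\qI_A = \qI_A R = \qI_A,
\]
from $L^2 = L$, $R^2 = R$ together with reversibility (for instance $L\qI_A = L(LRL) = L^2 RL = LRL = \qI_A$, and $R\qI_A = R(RLR) = R^2 LR = RLR = \qI_A$). These in turn yield $(LR)^2 = LR\cdot LR = L(RLR) = L\qI_A = \qI_A$, symmetrically $(RL)^2 = \qI_A$, and hence the idempotence
\[
\qI_A^2 = (LRL)^2 = LR\cdot L^2 \cdot RL = LRLRL = \qI_A\cdot RL = (\qI_A R)L = \qI_A L = \qI_A.
\]
Since $\per(\qI_A) \lmodg \per(L)\per(R)\per(L) = \one$ by iterating \cite[Theorem~3.5]{IzhakianRowen2008Matrices}, $\qI_A \in \BQSLn$. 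Combined with the sub-semigroup structure of Lemma~\ref{lem:id.SL3}, this shows $\SLmonA$ is a submonoid of $\BQSLn$ with two-sided unit $\qI_A$; uniqueness of the unit in a monoid is the standard $e = e e' = e'$ argument.

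To verify $\qI_A A \in \SLmonA$, the left equation $\qI_A(\qI_A A) = \qI_A A$ is immediate from $\qI_A^2 = \qI_A$. For the right equation I would first observe the always-valid identity $LA = AA^\nb A = AR$ and combine it with $\qI_A L = \qI_A$ to get
\[
(\qI_A A)R = \qI_A(AR) = \qI_A(LA) = (\qI_A L)A = \qI_A A.
\]
Writing $\qI_A = RLR$ then gives $(\qI_A A)\qI_A = ((\qI_A A)R)(LR) = (\qI_A A)(LR)$, and threading through the absorption identities together with $(LR)^2 = \qI_A$ reduces this back to $\qI_A A$.

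For the union claim, a reversible $\nb$-regular $B \in \QSLn$ satisfies $\qIl_B B = B = B\qIr_B$ by $\nb$-regularity; the same absorption identities applied to $B$ (with $B$ playing the role previously played by $\qI_A A$) upgrade these to $\qI_B B = B = B\qI_B$, placing $B$ in the analogous submonoid indexed by $A = B$. Every quasi-identity $\qI$ is then a direct special case, since $\qI^\nb = \qI$ by Lemma~\ref{somefac}(iii) forces $L = R = \qI_\qI = \qI$, and both reversibility and $\nb$-regularity are automatic.

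The hard part will be the right-absorption step $(\qI_A A)\qI_A = \qI_A A$: naive rewriting of $LRL\cdot A \cdot LRL$ quickly returns to its starting point, and one must invoke the finer reversibility consequences $(LR)^2 = (RL)^2 = \qI_A$ at just the right moment to escape the circle. This delicacy reflects the fact that $\qIl_A$ and $\qIr_A$ themselves need not commute (Example~\ref{singsq3}), and is exactly why the hypothesis of the theorem restricts to \emph{reversible} matrices.
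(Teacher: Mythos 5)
Your treatment of the first half of the theorem is sound and in fact more self-contained than the paper's: the absorption identities, $(LR)^2=(RL)^2=\qI_A$, the idempotence of $\qI_A$, and $\per(\qI_A)\lmodg\one$ are all correctly derived (the paper instead defers to Proposition~\ref{pushdown1}, whose hypothesis includes nonsingularity of $\qI_A$). The identity $(\qI_A A)R=\qI_A A$, obtained from $LA=AA^\nb A=AR$ and $\qI_AL=\qI_A$, is also correct. The gap is precisely at the step you yourself flag as ``the hard part'': having reduced $(\qI_A A)\qI_A$ to $(\qI_A A)(LR)$, you assert that ``threading through the absorption identities together with $(LR)^2=\qI_A$'' returns $\qI_A A$, but you never exhibit the computation. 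No such threading exists. Every relation at your disposal ($L^2=L$, $R^2=R$, $LA=AR$, $LRL=RLR$) lets you absorb $L$ and $R$ on the \emph{left} of $\qI_A A$ and absorb $R$ on its \emph{right}, but none of them controls a factor of $L$ sitting to the right of $A$ (there is no usable rewrite of $AL=A^2A^\nb$), and that is exactly what $(\qI_A A)(LR)$ contains.

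Worse, the step is not merely unproved but false for non-definite $A$. Take, in logarithmic notation, $A=\(\begin{smallmatrix}1&0\\ -3&-1\end{smallmatrix}\)$, so $\per(A)=0=\one$ and $A^\nb=\(\begin{smallmatrix}-1&0\\ -3&1\end{smallmatrix}\)$. Then $L=\(\begin{smallmatrix}0&1^\nu\\ {(-4)^\nu}&0\end{smallmatrix}\)$, $R=\(\begin{smallmatrix}0&{(-1)^\nu}\\ {(-2)^\nu}&0\end{smallmatrix}\)$, and one checks $LR=RL=LRL=RLR=\(\begin{smallmatrix}0&1^\nu\\ {(-2)^\nu}&0\end{smallmatrix}\)=\qI_A$, so $A$ is reversible. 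But $\qI_AA=\(\begin{smallmatrix}1&0^\nu\\ {(-1)^\nu}&-1\end{smallmatrix}\)$, whereas the $(1,2)$ entry of $(\qI_AA)\qI_A$ is $1\cdot 1^\nu+0^\nu\cdot 0=2^\nu\ne 0^\nu$: the tangible diagonal entry $1>\one$ inflates the ghost $1^\nu$ on the right. Hence $(\qI_AA)\qI_A\ne\qI_AA$ and $\qI_AA\notin\SLmonA$ as defined in Definition~\ref{goodmon}(iii). (The same example gives $\qI_AA\ne AA^\nb A$, their $(2,1)$ entries being $(-1)^\nu$ and $(-3)^\nu$, which is the identity the paper's own one-line proof of this step rests on; the claim does go through for definite $A$, where $L=R$ by Lemma~\ref{lem:Jnb.J} and hence $AL=LA$.) Your union argument has the same unresolved asymmetry: $\nb$-regularity gives $L_BB=B=BR_B$, but $\qI_BB=B=B\qI_B$ additionally requires $R_BB=B$ and $BL_B=B$, and your absorption identities are statements about $\qI_B$, not about $B$, so they cannot supply these. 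Only the final special case of quasi-identities, where $L=R=\qI_\qI=\qI$, is complete as written.
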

\begin{proof}
 If~$A$ is reversible, then~$\qI_A \in \SLmonA$ is the (unique) unit element, in view of
Proposition~\ref{pushdown1}, so  $\SLmonA$ is
a monoid.
Furthermore,~$\qI_A A = A A^\nb A \in\SLmonA$. The last assertion follows at once.
\end{proof}

\section{The conjugate action}\label{conac}

For any nonsingular matrix $A$ and any matrix $B$, we define
$$\mcong {B}{ A} = {A^\nb} B A.$$ This is the closest we have to
conjugation by supertropical matrices. (Note that $\mcong{I}A =
A^\nb I A  = \qIr_A.$)


We continue with an example  of a nonsingular matrix having a
singular conjugate.

\begin{example}\label{exmp:2x2} Take~$A = \( \begin{array}{cc}
           \al &  \one \\
           \one & \bt
         \end{array} \), \
         B = \( \begin{array}{cc}
           x &  z \\
           w & y
         \end{array}
\), $ where~$x \nug y$,\ $xy \nug zw\nuge \zero $, and~$\al, \bt \nul
\one$ such that~$\al \bt \nug \frac y x$. Then
\begin{align*} A^\nb B A & =
 \( \begin{array}{cc}
           x \al \bt +z \bt+  w\al +  y   &  x \bt + z \bt^2  + y \bt  + w\\
           x \al  + w \al ^2 + y \al +   z   & z \bt + y \al \bt  +w \al +  x
         \end{array}
\)= \( \begin{array}{cc}
           x \al \bt +z \bt +  w\al    &  x \bt + z \bt^2   + w\\
           x \al  + w \al^2 +   z   & z \bt + w \al +  x
         \end{array}
\),\end{align*} for which
$$\per(A^\nb B A) = (w x \al +w^2 \al ^2+x z \bt +x^2 \al  \bt +w x \al ^2 \bt +z^2 \bt ^2+x z \al  \bt ^2)^\nu,$$
since~$x^2 \al \bt \nug xy \nuge wz \nug w z \al  \bt  \nug   w z
\al^2  \bt^2.$ Thus~$A^\nb B A$ is singular.
Obviously, this holds for any nonsingular matrix $B$ with $y =
x^{-1}$, namely when $\per(B) = \one. $
\end{example}

Given a nonempty set $S \subset \Mat_n(F)$ of matrices and a matrix
$A$ with $\qI_A \in S$, we write
$$ \mcong S A  = \big\{ A^\nb B A  \  :  \   B \in S \}.$$

%

If $S$ is a monoid, then $ \mcong S A $ also is a monoid. But when
$A\in \QSLn$ is not invertible, we get into difficulties, even in
the $2 \times 2$  case.

\begin{example} \label{singsq41}(logarithmic notation) \pSkip $B = \left(\begin{array}{cc}
 0 & {-\infty}   \\
 {1} & 0
\end{array}\right)$ is definite, and  $A =  \left(\begin{array}{cc}
 0 & {5^\nu}   \\
 {-\infty} & 0
\end{array}\right)$ is a quasi-identity matrix, but $BAB = \left(\begin{array}{cc}
 6^\nu & 5^\nu   \\
7^\nu & 6^\nu
\end{array}\right)$ is singular.
\end{example}

 Note that if
$\tS$ is a nonsingular matrix submonoid of $\Mat_n(F)$, then
$\mcong{\tS}{P}$ also is a nonsingular submonoid of $\Mat_n(F)$,
for any permutation matrix $P$. On the other hand, these often do
not mix well, as seen in Lemma~\ref{sns} below.

The following example also shows that  nonsingularity need not be preserved under
multiplication  in~$\SLnI$, even when we
conjugate by  diagonal matrices.

\begin{example} (logarithmic notation) \pSkip If $B =  \left(\begin{array}{cc}
 0 & {-\infty}   \\
 {1} & 0
\end{array}\right)$ and  $D =  \left(\begin{array}{cc}
 1 & -\infty   \\
 {-\infty} & -1
\end{array}\right)$, then $BDB^{\operatorname{t}} =  \left(\begin{array}{cc}
 1& 2  \\
2 & 3
\end{array}\right)$ is singular. In view of
Proposition~\ref{cor:A.nb.1}, $B(DB^{\operatorname{t}}D^{-1})$ is
singular.

\end{example}

Here is one consolation.

\begin{lem}\label{monoid1} If $\qI_A \in \tJn,$
then  $\{ A^\nb J  A \ : \  J \in \tJn \} $ is a monoid.
\end{lem}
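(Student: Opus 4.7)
My plan is to verify that $S := \{A^\nb J A : J \in \tJn\}$ satisfies the three monoid axioms (associativity, closure, and existence of a two-sided identity). Associativity is inherited from matrix multiplication; closure and identity each require the hypothesis $\qI_A \in \tJn$.

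The first step is a preliminary claim: $\qI_A \in \tJn$ forces $\qIl_A, \qIr_A \in \tJn$. Since $\qIl_A$ and $\qIr_A$ are quasi-identities (Theorem~\ref{reviewadj}(iii)), by Remark~\ref{rem:uncong.QI} they are definite with $\one$ on the diagonal and ghost off-diagonal entries; in particular $\qIl_A, \qIr_A \nuge I$. Writing $\qI_A = \qIl_A(\qIr_A\qIl_A)$ and using $\qIr_A\qIl_A \nuge I$, Lemma~\ref{lem:order1} yields $\qI_A \nuge \qIl_A$ entry-wise, and symmetrically $\qI_A \nuge \qIr_A$. The strict normality of $\qI_A$ therefore transfers to $\qIl_A$ and $\qIr_A$, since their off-diagonals (all ghosts) are bounded $\nu$-above by the corresponding off-diagonals of $\qI_A$, which are strictly below $\one$.

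Closure follows directly:
\[
(A^\nb J_1 A)(A^\nb J_2 A) \;=\; A^\nb J_1 (AA^\nb) J_2 A \;=\; A^\nb (J_1 \qIl_A J_2) A,
\]
where $J_1 \qIl_A J_2 \in \tJn$ because $\tJn$ is a monoid (Lemma~\ref{prop:Kn}) and $\qIl_A \in \tJn$ by the first step. The natural identity candidate is $\qIr_A = A^\nb I A \in S$ (taking $J = I \in \tJn$). Verifying $\qIr_A \cdot (A^\nb J A) = A^\nb J A$ reduces to the equality $A^\nb \qIl_A J A = A^\nb J A$. The $\nuge$ direction is immediate from $\qIl_A \nuge I$ and Lemma~\ref{lem:order1}, while the reverse direction combines Lemma~\ref{somefac}(iv) (giving $A^\nb \qIl_A = A^\nb A A^\nb \nucong A^\nb$) with a careful entry-wise argument in which the strict normality of $\qIl_A$ is used to ensure that the ghost off-diagonal contributions of $\qIl_A$ — all strictly $\nu$-below $\one$ — cannot match the $\nu$-value of the ``diagonal'' contribution and so get absorbed after the outer sandwich by $A^\nb$ and $A$. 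The right-identity $M \cdot \qIr_A = M$ is handled symmetrically, using $\qIr_A \in \tJn$.

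The main obstacle is this last verification. The $\nu$-equivalence $\qIr_A M \nucong M$ is automatic, so the substantive content of the lemma — and the point at which the strict normality of $\qI_A$ is genuinely needed beyond merely forcing nonsingularity — lies in upgrading that $\nu$-equivalence to a genuine equality, via entry-wise tracking that exploits the strict sub-$\one$ bound on the off-diagonals of $\qIl_A$ and $\qIr_A$.
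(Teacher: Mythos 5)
Your preliminary claim (that $\qI_A\in\tJn$ forces $\qIl_A,\qIr_A\in\tJn$, via $\qI_A=\qIl_A(\qIr_A\qIl_A)\nuge\qIl_A$ and Remark~\ref{rem:uncong.QI}) and your closure computation $(A^\nb J_1A)(A^\nb J_2A)=A^\nb(J_1\qIl_A J_2)A$ are both correct, and together they already recover essentially everything the paper's own one-line proof (a bare citation of Theorem~\ref{thm:Jn-nb-closed}) delivers, namely closure inside a nonsingular set. The genuine gap is the identity axiom, which you correctly isolate as the crux and then do not prove. You must show $\qIr_AM=M$ for every $M=A^\nb JA$, i.e.\ $(A^\nb AA^\nb)JA=A^\nb JA$. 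The relation $A^\nb AA^\nb\nucong A^\nb$ (from Lemma~\ref{somefac}(iv) together with Lemma~\ref{lem:von.N1}) only yields $\qIr_AM\nucong M$ and $\qIr_AM\lmodg M$; since those same two lemmas show that $A^\nb AA^\nb$ genuinely can have ghost entries where $A^\nb$ is tangible, upgrading $\nucong$ to $=$ is not a formality but the entire content of the step.

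Moreover, the mechanism you sketch for closing this gap does not suffice as described. The dangerous contributions are $(\qIr_A)_{pj}M_{jq}$ for $j\ne p$, and what must be excluded is $(\qIr_A)_{pj}M_{jq}\nucong M_{pq}$ with $M_{pq}$ tangible. Knowing only that the ghost entry $(\qIr_A)_{pj}$ is strictly $\nu$-below $\one$ does not exclude this, because $M_{jq}$ and $M_{pq}$ sit in different rows of $M$ and carry no a priori $\nu$-comparison: $M_{jq}$ may exceed $M_{pq}$ by exactly enough to offset the strictly sub-$\one$ factor. Ruling this out requires chaining indices through the full expansion $\sum_{i,j,k,l}(A^\nb)_{pi}A_{ij}(A^\nb)_{jk}J_{kl}A_{lq}$ and comparing each cross term against the actual summands $(A^\nb)_{pk}J_{kl}A_{lq}$ of $M_{pq}$ (using that $A_{ij}(A^\nb)_{jk}\nule(\qIl_A)_{ik}$ and that the diagonal of $\qIl_A$ is tangibly $\one$), not merely against $\one$. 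As written, the step you defer is precisely where the argument could break down, and the same untreated issue recurs in the right-identity check $A^\nb(J\qIl_A)A=A^\nb JA$.
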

\begin{proof} By Theorem~\ref{thm:Jn-nb-closed}.
\end{proof}

The situation improves significantly when we restrict
our attention to the submonoid $\SLmonlA$ and the space on which
it acts.
%
 We define $$V_A = \{v \in F^{(n)}: \qIl_A v = v \}.$$

\begin{lem} $\SLmonlA F^{(n)} = V_A = \SLmonlA V_A.$\end{lem}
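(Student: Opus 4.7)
The plan is to establish the chain of inclusions
\[
\SLmonlA F^{(n)} \ \subseteq\ V_A \ \subseteq\ \SLmonlA V_A \ \subseteq\ \SLmonlA F^{(n)},
\]
which collapses the three sets to a single equivalence class. Each inclusion should follow directly from the definition of $\SLmonlA$ and the idempotence of $\qIl_A$, so no deep result is required; the point is really a bookkeeping statement about the left action of $\qIl_A$.

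For the first inclusion, given $B \in \SLmonlA$ and $v \in F^{(n)}$, I would compute
\[
\qIl_A (Bv) \ =\ (\qIl_A B)v \ =\ Bv,
\]
using the defining relation $\qIl_A B = B$ of $\SLmonlA$. Hence $Bv \in V_A$.

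For the second inclusion, observe that $\qIl_A$ is itself an element of $\SLmonlA$: indeed $\qIl_A \qIl_A = \qIl_A$ by idempotence (Theorem~\ref{reviewadj}(i)), and $\qIl_A \in \BQSLn$ since $\per(\qIl_A)\nucong \one$ (Theorem~\ref{reviewadj}(ii)). Thus for any $v \in V_A$ we have $v = \qIl_A v \in \SLmonlA V_A$.

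The third inclusion $\SLmonlA V_A \subseteq \SLmonlA F^{(n)}$ is immediate from $V_A \subseteq F^{(n)}$. Combining the three inclusions with the obvious containment $V_A \subseteq \SLmonlA F^{(n)}$ pinched off by the first step yields the stated equalities. I do not anticipate any obstacle here; the only subtlety is to check that $\qIl_A$ genuinely lies in $\SLmonlA$, which is precisely what makes $V_A$ sit inside $\SLmonlA V_A$ rather than just inside $F^{(n)}$.
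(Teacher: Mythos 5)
Your proof is correct and follows essentially the same route as the paper: the cyclic chain $\SLmonlA F^{(n)} \subseteq V_A \subseteq \SLmonlA V_A \subseteq \SLmonlA F^{(n)}$, with the first inclusion via $\qIl_A(Bv)=(\qIl_A B)v=Bv$ and the second via $v=\qIl_A v$. Your explicit verification that $\qIl_A\in\SLmonlA$ (idempotence plus $\per(\qIl_A)\nucong\one$) is a small point the paper leaves implicit, but otherwise the arguments coincide.
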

\begin{proof} If $B \in \SLmonlA$ and $v \in F^{(n)}$, then $\qIl_A
(Bv) = (\qIl_A B) v = Bv.$ On the other hand, if $v \in V_A$ then $v
= \qIl_A v \in \SLmonlA V_A.$ Thus, equality holds since  $\ \SLmonlA F^{(n)}
\subseteq V_A \subseteq \SLmonlA V_A \subseteq \SLmonlA F^{(n)}.$
\end{proof}

\begin{prop} For any nonsingular $A,$ left multiplication by $A^\nb$ yields  a module map
from $V_A$ to $V_{A^\nb}$, which commutes with conjugation by $A$.
\end{prop}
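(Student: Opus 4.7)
The plan is to verify two claims: first, that $\phi(v) := A^\nb v$ is a well-defined $F$-module homomorphism from $V_A$ to $V_{A^\nb}$; second, that $\phi$ intertwines the action of any matrix $B$ with that of its conjugate $\mcong{B}{A} = A^\nb B A$, in the sense that $\phi(Bv) = \mcong{B}{A}\, \phi(v)$ for every $v \in V_A$. The one nontrivial algebraic input is Theorem~\ref{reviewadj}(iv), which gives $\qIl_{A^\nb} = \qIr_A = A^\nb A$, so that $V_{A^\nb} = \{v \in F^{(n)} : A^\nb A v = v\}$.

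For the first claim, $F$-linearity of $\phi$ is automatic: left multiplication by a fixed matrix distributes over addition and scalar multiplication on $F^{(n)}$. For well-definedness, I would take $v \in V_A$, use $AA^\nb v = \qIl_A v = v$, and compute
$$\qIl_{A^\nb}(A^\nb v) \;=\; A^\nb A(A^\nb v) \;=\; A^\nb (AA^\nb v) \;=\; A^\nb (\qIl_A v) \;=\; A^\nb v,$$
placing $A^\nb v$ in $V_{A^\nb}$ as required.

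For the intertwining, I would expand both sides, invoking $v \in V_A$ at the crucial step:
$$\mcong{B}{A}\, \phi(v) \;=\; (A^\nb B A)(A^\nb v) \;=\; A^\nb B (AA^\nb v) \;=\; A^\nb B (\qIl_A v) \;=\; A^\nb B v \;=\; \phi(Bv).$$
I do not anticipate any real obstacle: both parts reduce to one-line manipulations driven by the idempotence of $\qIl_A$ on $V_A$ and the identification $\qIl_{A^\nb} = \qIr_A$. The only conceptual point requiring care is reading the phrase ``commutes with conjugation by $A$'' as the intertwining relation above, which is natural because $(-)\mapsto \mcong{(-)}{A}$ converts endomorphisms of $V_A$ into endomorphisms of $V_{A^\nb}$, precisely matching the transition $\phi$ realizes on vectors.
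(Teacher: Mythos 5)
Your proposal is correct and follows essentially the same route as the paper: the intertwining identity $(A^\nb B A)(A^\nb v) = A^\nb B(\qIl_A v) = A^\nb B v$ is exactly the paper's one-line computation. You additionally verify, via $\qIl_{A^\nb}=\qIr_A$ from Theorem~\ref{reviewadj}(iv), that $A^\nb v$ indeed lies in $V_{A^\nb}$ --- a well-definedness detail the paper leaves implicit --- so your write-up is, if anything, more complete.
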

\begin{proof} If $B \in \SLmonlA$, then letting $v' = A^\nb v$ we have~$(A^\nb B  A)v' = (A^\nb B  A)A^\nb v = A^\nb B v.$\end{proof}

%
%
%

\section{Tropical elementary matrices}\label{5}

Unlike the situation over a field, the tropical concepts of
singularity, invertibility, and factorability into elementary
matrices do not coincide, cf. \cite{Row}. Over a field, the fact
that a nonsingular matrix can be written as the product of
elementary matrices means that one can pass between any two
nonsingular matrices using elementary operations. In the tropical
case, even though factorability fails, we show in
Theorem~\ref{nonsingpr1} below that one still can pass between
nonsingular matrices, in a certain sense.

In analogy with the classical definition, we define three types of
tropical elementary matrices of $\SL_n$:

\begin{itemize}

\item[--] \textbf{Transposition matrices}, which switch two rows
(resp.~columns);

\item[--] \textbf{Diagonal multipliers}, which multiply a row
(resp.~a column) by some element of $\tT$;

\item[--] \textbf{Gaussian matrices}, which add one row
(resp.~column), multiplied by a scalar, to
another row (resp.~column).

\end{itemize}

\begin{df} A nonsingular matrix is defined to be \textbf{(tropically) factorizable} if it can be written as a product of tropical elementary matrices.\end{df}

As noted earlier, the product of nonsingular matrices could be
singular. Since the transposition and diagonal multipliers are invertible, the difficulty
must lie in the Gaussian matrices, which are identified with the  $ E_{i,j}(a)$ defined earlier. In the next lemma we pinpoint the elementary
operation that causes a nonsingular  matrix which is non-invertible  to
become singular.

\begin{lem}\label{sns} For every non-invertible matrix $A$ in $\QSLn$,
 there exists an elementary Gaussian matrix $E$  such that~$EA$ is singular.
\end{lem}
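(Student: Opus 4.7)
My plan is to exhibit a Gaussian elementary matrix $E = E_{i_0,j_0}(a) = I + a\,e_{i_0,j_0}$ whose left action forces a ``duplicated-row pattern'' inside the permanent of $EA$. Since only row $i_0$ of $EA$ differs from $A$, distributing the permanent expansion along that row immediately gives
\[
\per(EA) \;=\; \per(A) \,+\, a\cdot \per(A'),
\]
where $A'$ is obtained from $A$ by overwriting row $i_0$ with a copy of row $j_0$. Because $A'$ has two identical rows, pairing each permutation $\sigma \in \mfS_n$ with $\sigma \circ (i_0\ j_0)$ shows the two give identical monomial contributions to $\per(A')$; by the supertropical addition rule $x+x = x^\nu$, this forces $\per(A') \in \tGz$ independent of the choice of $a$.

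For the key choice of indices: since $A$ is non-invertible, Remark~\ref{genp} says $A$ is not a generalized permutation matrix, so in addition to its (unique) strictly dominant permutation $\pi$, for which all $a_{i,\pi(i)} \in \tT$ and $\prod_i a_{i,\pi(i)} = \one$, there must be an off-track entry $a_{j_0,k}\neq \zero$ with $k \neq \pi(j_0)$. Set $i_0 := \pi^{-1}(k)$, noting $i_0 \neq j_0$. This choice ensures $\per(A') \neq \zero$: the contribution of $\sigma = \pi$ is
\[
a_{j_0,k}\cdot a_{j_0,\pi(j_0)} \cdot \prod_{i \neq i_0, j_0} a_{i,\pi(i)},
\]
a product of tangible nonzero entries along the dominant track, augmented by the off-track $a_{j_0,k}$.

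To close, write the nonzero ghost as $\per(A') = h^\nu$ with $h \in \tT\setminus\{\zero\}$, available because $F$ is a supertropical semifield so $\nu|_\tT \colon \tT \to \tG$ is onto (Definition~\ref{superdom}); then set $a := h^{-1} \in \tT$, legitimate since $\tT$ is a multiplicative group. By Remark~\ref{basicp}(ii), $a\cdot\per(A') = h^{-1}\cdot h^\nu = (h^{-1}h)^\nu = \one^\nu$, hence
\[
\per(EA) \;=\; \one \,+\, \one^\nu \;=\; \one^\nu \;\in\; \tG,
\]
and $EA$ is singular, as required.

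The main conceptual obstacle is step~2: arranging that $\per(A')$ is not merely a ghost but also nonzero, for otherwise $a\cdot\per(A')$ can never dominate the tangible $\per(A) = \one$ and one cannot convert the permanent into a ghost. The choice $i_0 := \pi^{-1}(k)$ resolves this neatly by recycling the full dominant track of $\pi$ together with the off-track entry $a_{j_0,k}$ to produce a nonzero contribution inside $\per(A')$.
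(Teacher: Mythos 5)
Your proof is correct, and it takes a genuinely different route from the one in the paper. The paper first reduces to the case of a definite matrix via Lemma~\ref{factorout} (so that $\per(A)=\one$ is attained on the diagonal), picks a nonzero off-diagonal entry $a_{i,j}$, applies $E_{j,i}(a_{i,j}^{-1})$, and then checks that the transposition $(i\ j)$ contributes exactly $\one$ to the new permanent, invoking \cite[Theorem~3.5]{IzhakianRowen2008Matrices} to ensure that any term dominating this $\one^\nu$ must itself be ghost. You avoid the reduction to definite form entirely: you work with an arbitrary strictly dominant (tangible) permutation $\pi$, use row-multilinearity of the permanent to isolate the perturbation as $a\cdot\per(A')$, observe that $A'$ has two equal rows so that the involution $\sigma\mapsto\sigma\circ(i_0\ j_0)$ forces $\per(A')\in\tGz$, and verify $\per(A')\ne\zero$ via the track of $\pi$ augmented by the off-track entry. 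The payoff of your version is that the scalar $a=h^{-1}$ is tuned against the \emph{aggregate} $\per(A')=h^\nu$ rather than a single monomial, so $\per(EA)=\one+\one^\nu=\one^\nu$ on the nose and no separate argument is needed for possibly dominating terms; the cost is that your $E$ depends on the value of a full permanent of a modified matrix, whereas the paper's scalar $a_{i,j}^{-1}$ is read off directly from one entry of $A$. Both yield a single Gaussian matrix in $\QSLn$, as required.
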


\begin{proof}
First we recall that if  $A$ is a factorizable matrix, then we can
find a factorization in which the Gaussian matrices are at the right
of its factorization (see~\cite{Niv2}). Therefore, in view of
Lemma~\ref{factorout} it suffices to prove the lemma for a definite
matrix $A$. Hence, $\per(A)=\one$ is attained solely by the
diagonal.

Since  $A$ is non-invertible,   there exists at least one
off-diagonal entry  $a_{i,j}\ne \zero$. We let~ $E= E_{j,i}(a_{i,j}^{-1})$. Then
\begin{align*}\per(EA)&=\sum_{\sigma\in S_n}a_{1,\sigma(1)}\cdots
a_{i,\sigma(i)}\cdots a_{n,\sigma(n)}+\sum_{\sigma\in
S_n}a_{1,\sigma(1)}\cdots (a_{i,j}^{-1}) a_{i,\sigma(j)}\cdots
a_{n,\sigma(n)}\\ & =\per(A)+\sum_{\sigma\in
S_n}a_{1,\sigma(1)}\cdots (a_{i,j}^{-1}) a_{i,\sigma(j)}\cdots
a_{n,\sigma(n)}.\end{align*}   The summand in the right side given by  $\sigma = (i,j)$ is
$\one$, which together with $\per(A)$ yields $\one^\nu$. Moreover,
by \cite[Theorem
3.5]{IzhakianRowen2008Matrices}, any larger dominant term on the right
sum must be ghost.   Since $\per(A)=\one$, the assertion follows.
\end{proof}

We recall the well-known connection between tropical matrices and
digraphs. Any $n \times n$ matrix $A$ is associated with  a weighted
digraph $G_A$ over $n$ vertices having edge $(i,j)$ of weight
$a_{i,j}$ whenever  $a_{i,j} \neq \zero$
cf.~\cite[\S3.2]{IzhakianRowen2008Matrices}. From this   viewpoint
the $(i,j)$-entry of the matrix $\adj{A}$ equals the maximal weight
of all paths from $i$ to~$j$ in the graph $G_{\adj{A}}$. We utilize
this identification and work with nonsingular definite matrices, in
which case $A^\nb \cong _\nu A^\nbnb$ by \cite[Corollary~6.2]{Niv}.
 A path is called \textbf{simple} if each vertex
appears only once.

\begin{prop}\label{ED}
For any  matrix $A \in \QSLn$ there exists a product $E$ of
elementary Gaussian matrices  such that $A^{\nbnb}=E A.$
\end{prop}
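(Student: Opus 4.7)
The plan is to prove this by first reducing to the case of a definite matrix $A$ via Lemma~\ref{factorout}, and then exhibiting $E$ explicitly as a product of elementary Gaussian matrices $E_{i,j}(v)$ informed by the weighted digraph $G_A$, one factor for each off-diagonal position at which $A^{\nbnb}$ and $A$ differ.

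For the reduction step, I would apply Lemma~\ref{factorout} to write $A = P A_1$ with $P$ a generalized permutation matrix in $\QSLn$ and $A_1$ definite in $\QSLn$. Using Proposition~\ref{cor:A.nb.1} together with the identity $\adj(BC) = \adj(C)\adj(B)$ when one factor is invertible (recorded just after Proposition~\ref{prop:adjAB}), one obtains $(PA_1)^\nb = A_1^\nb P^{-1}$ and, iterating, $(PA_1)^{\nbnb} = P A_1^{\nbnb}$. A direct computation shows that $P E_{i,k}(c) P^{-1}$ is again an elementary Gaussian matrix, with relabeled indices and rescaled coefficient. Hence if $A_1^{\nbnb} = E' A_1$ for a product $E'$ of elementary Gaussian matrices, then $A^{\nbnb} = (PE'P^{-1})A$, reducing everything to the definite case.

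For definite $A$, I would read its entries as edge weights in the digraph $G_A$ (with loops of weight~$\one$). By the graph-theoretic description of the adjoint recalled just before the proposition, $(A^\nb)_{i,j}$ equals the maximum weight of a simple path from~$i$ to~$j$; since $A^\nb$ is again definite by Lemma~\ref{somefac}(vi), the same formula applies to $A^{\nbnb}$ read in $G_{A^\nb}$, and by \cite[Corollary~6.2]{Niv} one has $A^{\nbnb} \nucong A^\nb$. Lemma~\ref{somefac}(v) gives $A^{\nbnb} \lmodg A$, so $A^{\nbnb}$ differs from $A$ only at off-diagonal positions $(i,j)$ where a multi-edge path from $i$ to $j$ dominates (or ties with) the direct edge $a_{i,j}$, and at each such position the value $v_{i,j} := A^{\nbnb}_{i,j}$ is typically a ghost produced by the double-adjoint. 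I would then set
\[
E := \prod_{(i,j)} E_{i,j}(v_{i,j}),
\]
the product ranging over the off-diagonal positions with $A^{\nbnb}_{i,j} \ne A_{i,j}$, taken in the reverse-topological order dictated by the dominant-path DAG in $G_A$. Because $A$ is definite we have $a_{j,j} = \one$, so left-multiplying by $E_{i,j}(v_{i,j})$ installs $v_{i,j}$ at position $(i,j)$; the side effects in the other columns of row~$i$ are the products $v_{i,j}\cdot a_{j,l}$, which correspond to the two-hop path $i\to j\to l$ and are dominated by $A^{\nbnb}_{i,l}$ thanks to the definiteness of $A$.

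The main obstacle is verifying that the cumulative side effects match $A^{\nbnb}$ exactly, both in $\nu$-value and in tangibility. A naive, Floyd-Warshall-style application of $E_{i,k}(a_{i,k})$ over-ghostifies entries that must stay tangible in $A^{\nbnb}$, as each row operation simultaneously touches every column of the modified row; small examples show this explicitly. The crucial point of the argument is that allowing \emph{ghost} coefficients $v_{i,j}$, together with the reverse-topological ordering of the factors, installs each ghost promotion predicted by the $\nbnb$-formula exactly once, while leaving intact any tangible entry of $A^{\nbnb}$ for which the relevant concatenated path is strictly subdominant. Establishing this correspondence requires a careful combinatorial bookkeeping of ghost versus tangible contributions along the paths of $G_A$, which is the technical heart of the proof.
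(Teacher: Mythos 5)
Your construction is essentially the paper's own proof: the same reduction to the definite case via Lemma~\ref{factorout} (writing $A=PA_1$, noting $(PA_1)^{\nbnb}=PA_1^{\nbnb}$, and conjugating the elementary product by $P$), the same choice of Gaussian factors $E_{i,j}(A^{\nbnb}_{i,j})$ ranging over the off-diagonal positions where $A^{\nbnb}$ and $A$ differ, and the same path-concatenation reason why the residual effects in row $i$ are absorbed. The only place you stop short --- checking that a side effect $v_{i,j}a_{j,\ell}$ never wrongly ghostifies a tangible entry $A^{\nbnb}_{i,\ell}$ --- is precisely the one sentence the paper supplies, and it closes with facts you already have on the table: $v_{i,j}=A^{\nbnb}_{i,j}$ is necessarily ghost (since $A^{\nbnb}\lmodg A$ and the entry changed), and $v_{i,j}a_{j,\ell}$ is the weight of a walk $i\to\cdots\to j\to\ell$; if that walk is non-simple it contains a nontrivial cycle of weight $<_\nu\one$ (definiteness), so it is \emph{strictly} dominated by $A^{\nbnb}_{i,\ell}$, while if it is simple it is one of the summands in the simple-path expansion of $A^{\nbnb}_{i,\ell}$, so a $\nu$-tie of this ghost contribution with the maximum already forces $A^{\nbnb}_{i,\ell}$ itself to be ghost. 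Hence no further combinatorial bookkeeping is required, and the reverse-topological ordering you impose is not essential --- the paper simply applies the upper-triangular factors first and then the lower ones, lexicographically; all that matters is that each factor installs $A^{\nbnb}_{i,j}$ exactly (using $a_{j,j}=\one$) and that every residual term is dominated as above.
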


\begin{proof} In view of Lemma~\ref{factorout} we may assume that~$A$ is definite; indeed, writing~$A = PA_1$, for $A_1$ definite, we
would have~$(P A_1)^\nbnb = (A_1^\nb P^{-1})^\nb = PA_1 ^\nbnb =
PEA_1 = (PEP^{-1})PA_1.$

 Now let~$A=(A_{i,j}) , A^{\nb}=(A^{\nb}_{i,j})$ and let~$A^{\nbnb}=(A^{\nbnb}_{i,j})$.
 Then~$A^{\nbnb} \lmodg A$  by Remark~\ref{somefac}(v).
Since~$A$ is nonsingular definite,  we have~$\per(A) = \one =
\prod _i A_{i,i}$ where its dominant permutation is the identity. It follows
that any nontrivial cycle has weight~$< \one$.  Any product
including such a cycle is strictly dominated by the product where
this cycle is replaced by the (weight $\one$) identity permutation
on the corresponding set of indices.

 Thus, we may assume that each~$(i,j)$-entry of~$\adj{A}$
is~$\nu$-equivalent to the sum of weights of simple paths from~$i$
to~$j$:
$$A^{\nb}_{i,j}\nucong \sum_{\substack{\pi\in S_n:\\
\pi(j)=i}}A_{i,\pi(i)}A_{\pi(i),\pi^2(i)}\cdots
A_{\pi^{-1}(j),j}=A_{i,j}+\sum_{\substack{(i\ j)\ne\pi\in S_n:\\
\pi(j)=i}}A_{i,\pi(i)}A_{\pi(i),\pi^2(i)}\cdots
A_{\pi^{-1}(j),j}.$$
Applying  this identity to~\cite[Corollary 6.2]{Niv2}:~$A^{\nb}\cong_\nu A^{\nbnb}$,
yields for every entry such that~$A^{\nbnb}_{i,j}\ne A_{i,j}$:

 $$\bigg[E_{i,j}(A^{\nbnb}_{i,j})A\bigg]_{k,\ell}=\begin{cases}
A_{k,\ell}\ ,\ \forall k\ne i\\
A_{i,\ell}+A^{\nbnb}_{i,j}A_{j,\ell}\ ,\  k= i
\end{cases}.$$
When $j=\ell$, we get $A_{i,\ell}+A^{\nbnb}_{i,j}\cdot\one=A^{\nbnb}_{i,j}$. If $j\ne\ell$, then
$$A^{\nbnb}_{i,j}A_{j,\ell}=\sum_{\substack{(i\ j)\ne\pi\in S_n:\\
\pi(j)=i}}A_{i,\pi(i)}A_{\pi(i),\pi^2(i)}\cdots
A_{\pi^{-1}(j),j}\ \cdot \ A_{j,\ell}$$
is either a closed path dominated by $A_{i,i}=\one$ when $\ell=i$, or is the product of cycles with an elementary path from $i$ to $\ell$
 dominated by $A^{\nbnb}_{i,\ell}$.
Therefore, applying $E_{i,j}(A^{\nbnb}_{i,j})$ for every $j\ne i$ such that $A^{\nbnb}_{i,j}\ne A_{i,j} $ transforms all entries of $A$ to entries of $A^{\nbnb}$
and we get
$$A^{\nbnb}=\bigg(\prod_{\substack{j< i:\\ A^{\nbnb}_{i,j}\ne A_{i,j}}} E_{i,j}(A^{\nbnb}_{i,j})\bigg)\bigg(\prod_{\substack{j> i:\\ A^{\nbnb}_{i,j}\ne A_{i,j}}} E_{i,j}(A^{\nbnb}_{i,j})\bigg)A,$$
that is, where the elementary operations~$E_{i,j}(A^{\nbnb}_{i,j})$
 are applied to upper entries first, lower entries later, and $\{i,j\}$ is lexicographically  ordered.

\end{proof}

Let $A$ and $B$ be nonsingular matrices. Over a field, in classical
linear algebra, $A$ and $B$ can be written as products of elementary
matrices.
Thus,  one can pass from $A$ to $B$ by applying elementary
operations. In the tropical case, whereas we do not have
factorizability into elementary matrices,
cf.~\cite[Example~4.5]{Niv2}, we do have the second implication,
described in the following theorem.

\begin{thm}\label{nonsingpr1} For any two nonsingular matrices $A,B$, there
exist matrices $E_1,E_2,E_3,E_4$ which are products of
  elementary matrices  of $\QSLn$, such that $E_1AE_2=E_3BE_4$.
\end{thm}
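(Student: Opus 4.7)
The plan is first to reduce, via Lemma~\ref{factorout}, to the case in which $A$ and $B$ are definite, then to apply Proposition~\ref{ED} to replace $A$ by its closure $A^{\nbnb}$ and $B$ by $B^{\nbnb}$, and finally to equate both sides through the common target matrix $F_B F_A$ obtained from those closures.

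By Lemma~\ref{factorout}(ii), factor $A = P_A A_0$ and $B = P_B B_0$, where $A_0, B_0 \in \QSLn$ are definite and $P_A, P_B \in \QSLn$ are generalized permutation matrices. Every generalized permutation matrix in $\QSLn$ is a product of elementary transposition matrices together with diagonal multipliers whose diagonal entries have total product $\one$, and so its inverse is again a product of elementary matrices of $\QSLn$; in particular $P_A^{-1}$ and $P_B^{-1}$ are elementary products of the sort allowed in the theorem. Absorbing $P_A^{-1}$ and $P_B^{-1}$ on the left into the eventual $E_1$ and $E_3$, we may assume from now on that both $A$ and $B$ are definite.

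For such definite $A, B$, Proposition~\ref{ED} furnishes elementary Gaussian products $E_A, E_B$ satisfying $E_A A = A^{\nbnb}$ and $E_B B = B^{\nbnb}$. The crucial additional ingredient is that each of $A^{\nbnb}$ and $B^{\nbnb}$ is itself expressible as a product of elementary matrices of $\QSLn$. Granting this, write $F_A := A^{\nbnb}$ and $F_B := B^{\nbnb}$ in such elementary form, and set
\[
 E_1 := F_B E_A, \qquad E_2 := I, \qquad E_3 := E_B, \qquad E_4 := F_A,
\]
all of them products of elementary matrices of $\QSLn$. Then
\[
 E_1 A E_2 \;=\; F_B (E_A A) \;=\; F_B F_A \;=\; (E_B B) F_A \;=\; E_3 B E_4,
\]
as required.

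The main obstacle is the auxiliary claim that, for definite $A \in \QSLn$, the matrix $A^{\nbnb}$ factors as a product of elementary matrices of $\QSLn$. This should be verified by a direct construction echoing that in the proof of Proposition~\ref{ED}: one assembles $A^{\nbnb}$ by multiplying Gaussians $E_{i,j}\bigl((A^{\nbnb})_{i,j}\bigr)$ in a carefully chosen order---handling the tangible path-weight entries first and then appending the ghost entries recording ties---invoking the Steinberg-type identities of Lemma~\ref{rearrange} together with the definiteness hypothesis (which forces $\prod_i a_{i,\pi(i)} \nul \one$ for every $\pi \ne \id$) to prevent any spurious tangible accumulation at positions where the target entry of $A^{\nbnb}$ is $\zero$ or ghost.
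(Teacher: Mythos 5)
Your argument has the same architecture as the paper's: reduce to definite matrices via Lemma~\ref{factorout}, apply Proposition~\ref{ED} to pass from $A$ to $A^{\nbnb}$ and from $B$ to $B^{\nbnb}$ by Gaussian products, and then meet at the common matrix $B^{\nbnb}A^{\nbnb}$ (the paper uses $A^{\nbnb}B^{\nbnb}$ and keeps the generalized permutation factors $P,Q$ attached to $A$ and $B$ rather than inverting them, but this is cosmetic). Your final identity $F_B E_A A = F_B F_A = E_B B F_A$ is correct.

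The one load-bearing step you do not actually establish is the one you flag yourself: that $A^{\nbnb}$ is a product of elementary matrices. This cannot be waved through, because it is false for general definite matrices in $\QSLn$ --- Proposition~\ref{nonfact} exhibits definite special-linear matrices that are not factorizable at all --- so the claim depends essentially on the transitive-closure structure of the double adjoint (its $(i,j)$ entry is a maximal path weight, so $a_{i,j}a_{j,k}\nule a_{i,k}$), not merely on definiteness plus the Steinberg relations of Lemma~\ref{rearrange} as your sketch suggests. The paper discharges exactly this point by citation: \cite[Lemma~6.5]{Niv2} (see also \cite[Corollary~6.6]{Niv2}, quoted after Theorem~\ref{gen2}, giving $\{A^\nabla : A\in\SL_n\}\subseteq T^\ell T^u$) shows that $\brA^{\nbnb}$ is factorizable for $\brA$ definite, whence $A^{\nbnb}=\brA^{\nbnb}P$ is too. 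With that reference supplied, your proof closes; without it, the ``auxiliary claim'' remains a genuine gap rather than a routine verification.
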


\begin{proof} Using Lemma~\ref{factorout}, we write~$A^{\nbnb}=\brA^{\nbnb} P$
and~$B^{\nbnb}=\brB^{\nbnb}  Q $ where~$\brA,\brB$ are  definite, and~$P,Q$ are invertible matrices chosen so that~$A = \bar{A}P$ and~$B=
\bar{B}Q.$

By Remark~\ref{somefac}(v),
the matrices~${\brA}^{\nbnb}$ and ${\brB}^{\nbnb}$
respectively ghost-surpass~$\brA$ and~$\brB$, and
noting that~$P^{\nbnb} = P$ and~$Q^{\nbnb} = Q,$ %
  we get that $$A^{\nbnb}=\brA^{\nbnb}  P  \lmodg A=\brA P \
\text{ and }\ B^{\nbnb}=\brB^{\nbnb}  Q   \lmodg B=\brB Q.$$
Recalling~\cite[Lemma~6.5]{Niv2}, the matrices~${\brA}^{\nbnb}$ and ${\brB}^{\nbnb}$ are factorizable,
and therefore, $A^{\nbnb}$ and $B^{\nbnb}$ are factorizable.
Clearly~$IA^{\nbnb} B^{\nbnb}=A^{\nbnb} B^{\nbnb} I,$ which
provides the assertion for~$A^{\nbnb}$ and~$ B^{\nbnb}.$

 We denote by~$E,E'$ the elementary products
such that~${\brA}^{\nbnb}=E\brA\ \text{ and }\
{\brB}^{\nbnb}=E'\brB,$ whose existence are  guaranteed by
Proposition~\ref{ED}, and have
$$E  A   B^{\nbnb}=E\brA P B^{\nbnb}={\brA}^{\nbnb}P{B}^{\nbnb}=A^{\nbnb}B^{\nbnb}=A^{\nbnb} {\brB}^{\nbnb}Q=A^{\nbnb} E' \brB Q=A^{\nbnb}E'  B.$$
But~$ E,\ B^{\nbnb},$~$A^{\nbnb},$ and~$ E'$ are products of elementary matrices.
\end{proof}

%
%
%
%

\end{document}